\numberwithin{equation}{section}
\newcommand{\cx}{Q}
\newcommand{\e}{\epsilon}
\newcommand{\ga}{\gamma}
\newcommand{\de}{\delta}
\newcommand{\br}{\mathbb{R}}
\newcommand{\ik}{\varphi}
\newcommand{\pa}{\partial}
\newcommand{\bt}{\beta}
\newcommand{\al}{\alpha}
\newcommand{\la}{\lambda}
\newcommand{\be}{\begin{equation}}
\newcommand{\ee}{\end{equation}}
\newcommand{\tal}{\tilde\al}
\newcommand{\dd}{\text{d}}
\newcommand{\frec}{f_\e^{\text{rec}}}
\newcommand{\fprec}{\textcolor{black}{f_\e^{\text{p-rec}}}}
\newcommand{\fme}{\textcolor{black}{f_\e^{\text{m}}}}
\newcommand{\fpe}{\textcolor{black}{f_\e^{\text{p}}}}
\newcommand{\hfe}{\hat f_\e}
\newcommand{\hfpe}{\hat f_\e^{\text{p}}}
\newcommand{\alst}{\alpha_{\star}} 
\newcommand{\TV}{\text{TV}}
\newcommand{\CA}{\mathcal{A}}
\newcommand{\s}{\mathcal S}
\newcommand{\R}{\mathcal R}
\newcommand{\CF}{\mathcal F}
\newcommand{\CH}{\mathcal H}
\newcommand{\BZ}{\mathbb Z}
\newcommand{\oks}{I}
\newcommand{\fks}{\phi_{l}}
\newcommand{\CD}{\mathcal D}
\newcommand{\DTB}{\text{DTB}}
\newcommand{\phmn}{\phi_{\text{mn}}^{\prime\prime}}
\newtheorem{theorem}{Theorem}[section]
\newtheorem{lemma}[theorem]{Lemma}
\newtheorem{corollary}[theorem]{Corollary}
\theoremstyle{definition}
\newtheorem{assumptions}[theorem]{Assumptions}
\newtheorem{definition}[theorem]{Definition}
\newtheorem{remark}[theorem]{Remark}
\begin{document}

\title[Analysis of reconstruction]{Analysis of reconstruction of functions with rough edges from discrete Radon data in $\mathbb R^2$}
\author[A Katsevich]{Alexander Katsevich$^1$}
\thanks{$^1$This work was supported in part by NSF grant DMS-1906361. Department of Mathematics, University of Central Florida, Orlando, FL 32816 (Alexander.Katsevich@ucf.edu). }

\begin{abstract} We study the accuracy of reconstruction of a family of functions $f_\epsilon(x)$, $x\in\mathbb R^2$, $\epsilon\to0$, from their discrete Radon transform data sampled with step size $O(\epsilon)$. For each $\epsilon>0$ sufficiently small, the function $f_\epsilon$ has a jump across a rough boundary $\mathcal S_\epsilon$, which is modeled by an $O(\epsilon)$-size perturbation of a smooth boundary $\mathcal S$. The function $H_0$, which describes the perturbation, is assumed to be of bounded variation. Let $f_\e^{\text{rec}}$ denote the reconstruction, which is computed by interpolating discrete data and substituting it into a continuous inversion formula. We prove that $(f_\e^{\text{rec}}-K_\epsilon*f_\epsilon)(x_0+\epsilon\check x)=O(\epsilon^{1/2}\ln(1/\epsilon))$, where $x_0\in\mathcal S$  and $K_\epsilon$ is an easily computable kernel. 
\end{abstract}

\maketitle

\section{Introduction}

Let $f$ be a function in $\br^2$ and $\s$ be some curve. Suppose $f$ has a jump across $\s$ and $f$ is smooth otherwise. Let $\frec$ be a reconstruction from discrete Radon transform (RT) data of $f$, where $\e$ represents the data step size. The reconstruction is computed by substituting interpolated data into a ``continuous'' inversion formula. In many applications it is important to know how accurately the singularities of $f$ are reconstructed, including medical imaging, materials science, and nondestructive testing.

To address this, in \cite{Katsevich2017a, kat19a, Katsevich2020a, Katsevich2020b, Katsevich2021a}, the author developed a novel method of image reconstruction analysis, which studies the behavior of $\frec$ in an $O(\e)$ neighborhood of $\s$. In those papers, the technique was referred to as \emph{local resolution analysis}. However, this theory supplies more information than just resolution and applies away from $\s$ as well (see e.g. \cite{Katsevich_aliasing_2023}). In light of this, we will henceforth refer to the technique as \emph{Local Reconstruction Analysis} (LRA). The main idea of LRA is to represent $\frec$ in the form 
\be\label{DTB new use}
\frec(x_0+\e\check x)=\DTB(\check x;x_0,\e)+\text{remainder},
\ee
where $x_0\in\s$ and $\check x$ is confined to a bounded set. Thus, {\it we study reconstruction near $\s$ in a neighborhood, whose size decreases at the same rate as the data step size.} Here $\DTB(\check x;x_0,\e)$ is an easily computable function, which we call the discrete transition behavior (or DTB for short), and the remainder is small. Usually, $\DTB(\check x;x_0,\e)$ is computed by convolving the leading singularity of $f$ near $x_0\in \s$ with a simple, explicitly computable kernel. 

The practical use of the DTB function is based on the following idea. When $\e$ is small, the remainder is small, and $\DTB(\check x;x_0,\e)$, which is given by a simple formula (see e.g. \eqref{via_kernel_lim}), is an accurate approximation to $\frec$, which is {\it the actual reconstruction from discrete data.} Numerical experiments in \cite{Katsevich2017a, kat19a, Katsevich2020a, Katsevich2020b} show that the remainder in \eqref{DTB new use} is indeed quite small for realistic values of $\e$. Hence, a wide range of time-consuming analyses of reconstructed images can be replaced by the corresponding analyses of $\DTB(\check x;x_0,\e)$. For example, one can use $\DTB(\check x;x_0,\e)$ to estimate the resolution of reconstruction, optimize data sampling, and perform other required tasks with the help of a theoretical analysis or in a computationally efficient manner (i.e., without simulating or measuring tomographic data and then reconstructing from it).

Initially, LRA was developed for functions for which $\s=\text{sing\,supp}(f)$ is a smooth curve (or hypersurface). This was done in a number of settings, including generalized Radon transforms, general (parametrix-based) inversion  formulas, etc. \cite{Katsevich2017a,kat19a, Katsevich2020a, Katsevich2020b, Katsevich2021a}. 
In many applications discontinuities of $f$ occur across {\it rough} surfaces. Examples include soil and rock imaging, where the surfaces of cracks and pores and boundaries between regions are highly irregular and frequently simulated by fractals \cite{Anovitz2015, GouyetRosso1996, Li2019, soilfractals2000, PowerTullis1991, Renard2004, Zhu2019}. 

In \cite{Katsevich2023a, Katsevich2022a} the author extended LRA to functions with jumps across rough boundaries. In these papers we study nonsmooth boundaries in an asymptotic ($\e\to0$) fashion. Instead of a single $f$, we consider a family of modified functions $\fme:=f-\fpe$, where $\fpe$ is a perturbation. Locally, each $\fme$ has a jump across $\s_\e$, where $\s_\e$ is a small and suitably scaled perturbation of a (fixed) smooth curve $\s$. 
The local parametrization of $\s$ is $\mathcal I\ni u\to y(u)$, where $\mathcal I$ is an interval, $y(u)$ is sufficiently regular, and $y’(u)\not=0$, $u\in\mathcal I$. The corresponding local parametrization of $\s_\e$ is given by $\mathcal I\ni u\to y(u)+\e H_0(u/\e^{1/2},\e)\vec\theta(u)$, where $\vec\theta(u)$ is a unit vector orthogonal to $\s$ at $y(u)$, $u\in \mathcal I$ (see Fig.~\ref{fig:jump} and Fig.~\ref{fig:pert}). The bounded function $H_0$ defines the perturbation $\s\to\s_\e$. The roughness of $\s_\e$ is described by the roughness of $H_0$ as a function of $u$ ($\e>0$ is viewed as a parameter). Our construction ensures that the size of the perturbation is $O(\e)$ in directions normal to $\s$, and the perturbation scales like $\e^{-1/2}$ in directions tangent to $\s$. 

To prove that the remainder in \eqref{DTB new use} is small when $\s_\e$ is rough, an additional assumption is made in \cite{Katsevich2023a, Katsevich2022a}, namely that the family of functions $H_0(\cdot,\e)$ has level sets that are not too dense. This assumption is not too restrictive and allows for fairly nonsmooth $H_0$. Given any $\ga\in (0,1)$, \cite{Katsevich2023a} provides an example of $H_0$, whose level sets are not too dense as required and which is H{\"o}lder continuous with exponent $\ga$, but not H{\"o}lder continuous with any $\ga'>\ga$ on a dense subset of $\br$. 

To prove that the DTB is accurate for rough $\s_\e$, we show that the magnitude of the remainder is $O(\e^{1/2}\ln(1/\e))$ {\it independently} of how rough $\s_\e$ is (i.e. independent of its H{\"o}lder exponent $\ga$) \cite{Katsevich2022a}. The ideas behind the proofs in \cite{Katsevich2023a, Katsevich2022a} are very different from the earlier ones \cite{Katsevich2017a,kat19a, Katsevich2020a, Katsevich2020b, Katsevich2021a}. The latter proofs revolve around the smoothness of $\s=\text{sing\,supp}(f)$. The new proofs are based on the cancellations occurring in certain exponential sums (see \eqref{extra terms I} below). Numerical experiments in \cite{Katsevich2023a, Katsevich2022a} show an excellent match between the actual reconstruction and $\DTB$ even when $\s_\e$ is fractal (which is a case not covered by theory yet).  

Our overarching goal is to extend LRA (i.e., prove that the $\DTB$ is accurate) to as wide a class of perturbations (i.e., functions $H_0$) as possible. For example, the additional assumption about level sets of $H_0$ in \cite{Katsevich2023a, Katsevich2022a} may still be violated in some applications and it would be good to get rid of it. In this paper we generalize LRA to functions $H_0\in L^\infty(\br)$ with bounded variation (see assumptions~\ref{ass:H0} for a precise definition). The proofs are different than those in \cite{Katsevich2023a, Katsevich2022a}. We reduce exponential sums to sums of oscillatory integrals, which are then estimated. This makes the proofs simpler and more transparent.

The paper is organized as follows. The state of the art and significance of our results from a theoretical perspective are discussed in section~\ref{sec:theor motiv}. Practical motivation of this research and potential applications are discussed in section~\ref{sec:motiv}. In section~\ref{sec:a-prelims} we describe the set-up, introduce notation and assumptions, and state the main result Theorem~\ref{main-res}. In section~\ref{sec:beg proof} we reduce the statement of the theorem to two inequalities (see \eqref{extra terms I} and \eqref{extra terms II}). The first of them, which is more difficult to prove, involves exponential sums. We also outline the main steps in the proof of the two inequalities. In the proof we consider the following three cases:
\begin{itemize}
\item[(A)] $x_0\in\s$; 
\item[(B)] $x_0\not\in\s$, and there is a line through $x_0$, which is tangent to $\s$; and 
\item[(C)] $x_0\not\in\s$, and no line through $x_0$ is tangent to $\s$. 
\end{itemize}
The proofs of the two inequalities in case (A) is in sections~\ref{sec:beg A} and \ref{sec:end A}. The proofs of both inequalities in cases (B) and (C) are in sections~\ref{sec:caseB} and \ref{sec:caseC}, respectively. The proofs of auxiliary results are in appendices~\ref{sec:prf sum_int}--\ref{sec:model ints}.

\section{Theoretical motivation} \label{sec:theor motiv}

\subsection{State of the art}
Analysis of reconstruction from discrete Radon transform data is an important topic, which has been studied quite well. The most common approach is based on sampling theory, see e.g. \cite{nat93, pal95, far04} and references therein. The underlying assumption is that the function $f$ being reconstructed is essentially bandlimited, i.e. that its Fourier transform is sufficiently small outside some compact set \cite{far04}. A typical prediction of sampling theory is the optimal data sampling step size that guarantees accurate recovery of $f$. Note that the requirement that $f$ be essentially bandlimited implies $f\in C^\infty$.

Another approach to study reconstruction of a nonsmooth $f$ from discrete data uses tools of semiclassical analysis  \cite{stef20, Monard2021}. In these papers, $f$ is not necessarily bandlimited, but the data represent the (generalized) Radon transform of $f$ convolved with a classically bandlimited mollifier $w$. 
The latter is a model of the detector aperture function. Thus, $w$ is analytic, so it is $C^\infty$ and not compactly supported. In practice, detector aperture functions have limited smoothness. E.g., they are likely to be non-smooth across the boundary of a detector pixel. More importantly, $\text{supp}(w)$ is usually compact and coincides with the support of the detector pixel. Even though a classically bandlimited $w$ can be fairly accurately approximated by a function, which is not bandlimited, this does not eliminate the need for a rigorous theoretical analysis of reconstruction with a realistic $w$.

Computer Tomography (CT) is a hugely popular imaging modality, which has found numerous applications in medical imaging, nondestructive testing, security inspection, and many other areas. CT frequently involves imaging of objects with edges. The functions $f$, which describe such objects, have sharp jumps across curves (in $\br^2$) or surfaces (in $\br^3$), and/or other singularities. Such functions are not bandlimited. Therefore, theoretical analysis of image reconstruction of such objects from discrete data is highly desired. Convergence of reconstruction algorithms in the case of $f$ with discontinuities has been studied in the literature \cite{gonchar86,  popov90, pal93, popov98}. However, in these works the discontinuities of the object are a complicating factor rather than the object of study. 

\subsection{LRA, $WF(f)$, and aliasing}
In LRA, neither $f$ nor $w$ is bandlimited in any way (classically or semiclassically). Since Radon data are discrete, the reconstruction is necessarily a combination of a useful signal (a smoothed singularity of $f$) and aliasing. Comparing with \eqref{DTB new use}, the useful part is described by $\DTB(\check x;x_0,\e)$, and aliasing is a part of the remainder. Hence estimating the magnitude of aliasing is an essential part of our proofs.

Recall that aliasing occurs when $\CF f$, the Fourier transform of $f$, does not decay sufficiently fast at infinity. Consider the behavior of $\CF f$ at infinity for a typical $f$ that has a jump across some curve. Qualitatively, this behavior is best described using the notion of the wave-front set of a function, $WF(f)$ \cite[Section 8.1]{hor}.

Most commonly, a function $f(x)$, $x\in\br^2$, which is non-smooth across a smooth curve $\s\subset\br^2$, is described as a conormal distribution associated with $\s$. For such an $f$ we have $WF(f)\subset N^*\s$, the conormal bundle of $\s$ (see \cite[Example 3.3, Chapter VII and Section 3, Chapter VIII]{trev2}). In other words, $(x_0,\xi_0)\in WF(f)$ only when the directions $\xi_0$ are conormal to $\s$ at $x_0$ (which all lie on a line). This means that the Fourier transform of a suitably localized $f$ decays slowly only in an arbitrarily small conic neighborhood of such $\xi_0$. In contrast, if $f_\e$ has a nonsmooth boundary $\s_\e$, then $WF_{x_0}(f_\e)$,  $x_0\in\s_\e$, generally contains the {\it entire} $\br^2\setminus0$. Consequently, regardless of how well we localize $f_\e$ around $x_0$, its Fourier transform decays slowly along all directions. Hence, there is much more potential for aliasing. This makes LRA, which necessarily includes an analysis of aliasing, more complicated in the case of rough boundaries and necessitates the development of new tools.

Our method of proof illustrates the mechanism by which aliasing from different parts of $\s_\e$ gets reduced when the Radon transform inversion is applied to discrete data, with only the useful contribution (the DTB function) remaining in the limit as $\e\to0$. The mechanism is based on oscillations inherent to the Radon transform (i.e., those described by the exponential factor in \eqref{extra terms I}) interacting with an edge at $\s_\e$ (encoded in $A_m$ \eqref{recon-ker-v2}), see Remark~\ref{rem:key}. LRA is a powerful tool for exploring various aspects of this interaction. 

\section{Practical motivation}\label{sec:motiv} 

\subsection{Applications in petroleum industry}
One application of our results is in micro-CT, which is CT capable of achieving micrometer resolution. Micro-CT is widely used in the petroleum industry to image rock samples extracted from wells. For example, the reconstructed images are used as input into Digital Rock Physics (DRP) analyses. A collection of numerical methods to estimate various rock properties using digital cores is called DRP \cite{Fredrich2014, Saxena2019}. The term ``digital core'' stands for a digital representation of the rock sample (rock core) obtained by micro-CT scanning, reconstruction, and image analysis (segmentation and classification, feature extraction, etc.) \cite{Guntoro2019}. DRP ``is a rapidly advancing technology that relies on digital images of rocks to simulate multiphysics at the pore-scale and predict properties of complex rocks (e.g., porosity, permeability, compressibility). ... For the energy industry, DRP aims to achieve more, cheaper, and faster results as compared to conventional laboratory measurements.'' \cite{Saxena2019}. Furthermore, ``The simulation of various rock properties based on three-dimensional digital cores plays an increasingly important role in oil and gas exploration and development. {\it The accuracy of 3D digital core reconstruction is important for determining rock properties}.''   \cite{Zhu2019} (italic font added).

Consider one example. First, the reconstructed micro-CT image is segmented to identify the pore space inside the sample as accurately as possible. Then numerical fluid flow simulations {\it inside the identified pore space} are used to compute the permeability of the sample \cite{bss18}. Finally, the computed permeability is used for formation evaluation and improving oil recovery \cite{sha17}. 

Clearly, CT image segmentation is a key step that affects the accuracy of fluid flow simulation. Boundaries between regions inside the rock (e.g., between the solid matrix and pore space) are rough \cite{Cherk2000}, making image segmentation an especially challenging task. Errors in pore space identification may lead to incorrect flow simulation and errors in computed permeability \cite{Chhatre2017, Saxena2019, Saxena2019a}. Therefore, precise knowledge of the accuracy and resolution of the reconstructed image (especially near sharp features in the image), which affect the accuracy of image segmentation, is of utmost importance. Effects that degrade the resolution of micro-CT (e.g., due to finite data sampling) and how these effects manifest themselves in the presence of rough boundaries require careful investigation. Once fully understood and quantified, these effects can be accounted for, leading to more accurate simulations and DRP outcomes.

\subsection{Applications in other areas}
Objects with rough boundaries, including fractal boundaries, occur in many other applications of physics, engineering, and technology \cite{Bramb2017}. One of their most consequential uses is in the design of antennas \cite{Kr_2017}. “Recently, the self-filling in space curves like Hilbert and Peano fractals were \dots used to design high performance, low profile, conformal antennas with enhanced radiation characteristics and improved power gain of various communication and radar applications” \cite{Kr_2017}. In \cite{JSP2017} the authors discuss 3D fractal antennas: their “envisaged applications are in the defense and aerospace sectors where high-value, lightweight, and mechanically robust antennas can be integrated with other \dots parts.” Given our communication-centric world and the need for robust manufacturing processes, CT-based nondestructive testing of fractal antennas is extremely important. Because the correct shape of such antennas, down to the smallest details, is important to enforce, their high-resolution CT imaging during and after manufacturing will facilitate maintaining strict quality standards. 

Among the medical applications of CT reconstruction of rough shapes is cancer imaging. Healthy and cancerous tissues grow blood vessels differently (i.e., they have different {\it perfusion} patterns), so analysis of the roughness of boundaries in the reconstructed image may help to detect malignancies. In \cite{MD2014} it was demonstrated that “fractal analysis is a suitable method for quantifying perfusion heterogeneity.” In \cite{MHD2022} it was established for one type of pancreatic cancer that “fractal analysis visualizes perfusion chaos in the tumor rim and improves size measurement on CT in comparison to” other methods. Thus, more accurate CT reconstruction of rough shapes may lead to better detection and a more accurate assessment of tumors.

\section{Preliminaries}\label{sec:a-prelims}

Consider a real-valued function $f(x)$, $x\in\br^2$, in the plane, and let $\s$ be some curve. 
For a domain $U\subset\br^n$, $n=1,2,\dots$, the notation $f\in C^k(U)$ means that $f(x)$ is $k$ times differentiable and $\pa_x^m f \in L^\infty(U)$ for any multi-index $m$, $|m|\le k$. The notation $f\in C_0^k(U)$ means that $f\in C^k(U)$ and $\text{supp}(f)\subset U$.

\begin{assumptions}[Properties of $f$]\label{ass:f}
$\hspace{1cm}$ 
\begin{itemize}
\item[F1.] $\s$ is a $C^4$ curve;
\item[F2.] $f$ is compactly supported and $f\in C^2(\br^2\setminus\s$); and
\item[F3.] For each $x_0\in\s$ there exist a neighborhood $\mathcal U\ni x_0$, domains $D_\pm$, and functions $f_\pm\in C^2(\br^2)$ such that
\begin{equation}\label{f_def}\begin{split}
& f(x)=\chi_{D_-}(x) f_-(x)+\chi_{D_+}(x) f_+(x),\ x\in \mathcal U\setminus \s,\\
& D_-\cap D_+=\varnothing,\ D_-\cup D_+=\mathcal U\setminus \s,
\end{split}
\end{equation}
where $\chi_{D_\pm}$ are the characteristic functions of $D_\pm$.
\end{itemize}
\end{assumptions}
Assumptions~\ref{ass:f} describe a typical function, which has a jump discontinuity across a smooth curve (see Figure~\ref{fig:jump}).

\begin{figure}[h]
{\centerline
{\epsfig{file=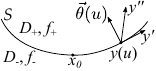, width=5.0cm}}}
\caption{Illustration of a function $f$ discontinuous across $\s$.}
\label{fig:jump}
\end{figure}

Discrete Radon transform data are given by
\be\label{data_eps}
\hfe(\al_k,p_j):=\frac1\e \iint_{\br^2} w\left(\frac{p_j-\vec\al_k\cdot y}{\e}\right)f(y)\dd y,\
p_j=\bar p+j\Delta p,\ \al_k=\bar\al+k\Delta\al,
\ee
where $w$ is the detector aperture function, $\Delta p=\e$, $\Delta\al=\kappa\e$, $\kappa>0$, and $\bar p$, $\bar\al$, are fixed. 
Here and below, $\vec \al$ and $\al$ in the same equation are related by $\vec\al=(\cos\al,\sin\al)$. The same applies to $\vec\theta=(\cos\theta,\sin\theta)$ and $\theta$. Sometimes we also use $\vec\al^\perp=(-\sin\al,\cos\al)$, and similarly for $\theta^\perp$. 

\begin{assumptions}[Properties of the aperture function $w$]\label{ass:w}
$\hspace{1cm}$
\begin{enumerate}
\item $w$ is even, $w\in C_0^{\lceil \bt \rceil+1}(\br)$; and 
\item Normalization: $\int w(p)dp=1$.
\end{enumerate}
\end{assumptions}
Here $\lceil \bt\rceil$ is the ceiling function, i.e. the integer $n$ such that $n-1<\bt\le n$. The required value of $\bt$ is stated below in Theorem~\ref{main-res}. Later we also use the floor function $\lfloor \bt\rfloor$, which gives the integer $n$ such that $n\le\bt<n+1$, and the fractional part function $\{\bt\}:=\bt-\lfloor \bt\rfloor$. 

Reconstruction from the data \eqref{data_eps} is achieved by the formula
\be\label{recon-orig}
\frec(x)=-\frac{\Delta\al}{2\pi}\sum_{|\al_k|\le \pi/2} \frac1\pi \int \frac{\pa_p\sum_j\ik\left(\frac{p-p_j}\e\right)\hfe(\al_k,p_j)}{p-\al_k\cdot x}\dd p,
\ee
where $\ik$ is an interpolation kernel.

\begin{assumptions}[Properties of the interpolation kernel $\ik$]\label{ass:ik}
$\hspace{1cm}$
\begin{enumerate}
\item $\ik$ is even, compactly supported, and its Fourier transform, $\tilde\ik$, satisfies $\tilde\ik(\la)=O(|\la|^{-(\bt+1)})$, $\la\to\infty$;
\item $\ik$ is exact up to order $1$, i.e. 
\be\label{exactness}
\sum_{j\in\mathbb Z} j^m\ik(u-j)\equiv u^m,\ m=0,1,\ u\in\br.
\ee
\end{enumerate}
\end{assumptions}
Here $\bt$ is the same as in assumption~\ref{ass:w}(1). Assumption~\ref{ass:ik}(2) implies $\int\ik(p)dp=1$. See Section IV.D in \cite{btu2003}, which shows that $\ik$ with the desired properties can be found for any $\bt>0$ (i.e., for any regularity of $\ik$).

For a function $f$ on an interval $[a,b]$, the total variation of $f$ is defined as follows:
\be\label{var def}
\TV(f,[a,b]):=\sup\sum_{i=1}^N|f(t_i)-f(t_{i-1})|,\ a\le t_0<t_1<\dots<t_N\le b,
\ee
where the supremum is taken over $N\ge 1$ and all corresponding partitions of the interval \cite[Section 6.3]{Royden2010}.

Consider a local segment of $\s$, which is parametrized by $\mathcal I\ni u\to y(u)\in\s$, where $\mathcal I$ is an interval, $y\in C^4(\mathcal I)$, and $|y’(u)|\ge c>0$ for any $u\in \mathcal I$. Let $\vec\theta(u)$ be a unit normal to $\s$ at $y(u)$ (continuously defined), see Figure~\ref{fig:jump}. We introduce a family of local perturbations, $\fpe$, $\e>0$, of $f$ so that for each $\e>0$ the modified function $\fme:=f-\fpe$ has a jump across some rough curve $\s_\e$, which is a local perturbation of $\s$. 

Let $H_0(u;\e)$, $u\in\br$, be a family of real-valued functions defined for all $\e>0$ sufficiently small. The local perturbations $\fpe$ are constructed as follows (see Figure~\ref{fig:pert}, left panel)
\be\label{main-fn}\begin{split}
\fpe(x):=&(f_+(x)-f_-(x))\chi(t,H_\e(u)),\ 
\chi(t,r):=\begin{cases} 1,&0< t\le r,\\
-1,&r\le t < 0,\\
0,&\text{otherwise},
\end{cases}\\
x=&y(u)+t\vec\theta(u),\ H_\e(u):=\e H_0(\e^{-1/2}u;\e),\ u\in \mathcal I,
\end{split}
\ee
where $f_\pm$ are the functions appearing in \eqref{f_def} (see also Figure~\ref{fig:jump}).

As is easily seen, the modified function $\fme=f-\fpe$ has a jump across a curve $\s_\e$ (instead of $\s$), where $\s_\e$ is parametrized by (see Figure~\ref{fig:pert}, right panel)
\be\label{se param}
\mathcal I\ni u\to y(u)+H_\e(u)\vec\theta(u)\in\s_\e.
\ee
Suppose $H_0$ has the following properties:  

\begin{assumptions}[Properties of local perturbations $H_0$]\label{ass:H0}
$\hspace{1cm}$
\begin{enumerate}
\item There exists a constant, denoted $\Vert H_0\Vert$, such that $\sup_{u\in\br}|H_0(u;\e)|\le \Vert H_0\Vert$ for all $\e>0$ sufficiently small;
\item There exists a constant, denoted $V_0$, such that $\TV(H_0,I)\le V_0|I|$ for any interval $I\subset\br$ and all $\e>0$ sufficiently small.
\end{enumerate}
\end{assumptions}

By assumption~\ref{ass:H0}(1), $\s_\e$ is indeed an $O(\e)$-size perturbation of $\s$. 
We do not restrict what fraction of $\s$ is perturbed. For example, all of $\s$ can be perturbed. When more than one local segment of $\s$ is perturbed, we assume that each local perturbation satisfies assumptions~\ref{ass:H0}, the functions $H_0$ in different neighborhoods of $\s$ can be different, and different versions of $H_0$ are compatible whenever two neighborhoods intersect.   


\begin{figure}[h]
{\centerline{
{\hbox{
{\epsfig{file=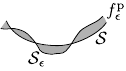, width=4.0cm}}
{\hspace{5mm}}
{\epsfig{file=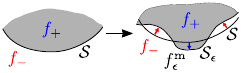, width=7cm}}
}}}}
\caption{Illustration of the perturbation $\fpe(x)$ (left panel) and its construction via the perturbed boundary $\s_\e$ (right panel).}
\label{fig:pert}
\end{figure}

By \eqref{data_eps}, \eqref{recon-orig},
\be\label{via_kernel}\begin{split}
\fprec(x)=-\frac{\Delta\al}{2\pi}\frac1{\e^2}\sum_{|\al_k|\le \pi/2}\sum_j & \CH\ik'\left(\frac{\vec\al_k\cdot x-p_j}\e\right)\\
&\times \iint w\left(\frac{p_j-\vec\al_k\cdot y}{\e}\right)\fpe(y)\dd y,
\end{split}
\ee
where $\CH$ denotes the Hilbert transform (the integral with respect to $p$ in \eqref{recon-orig}).
Following \cite{Katsevich2023a, Katsevich2022a}, replace the sums with respect to $k$ and $j$ in \eqref{via_kernel} with integrals to formally obtain:
\be\label{via_kernel_lim}\begin{split}
\fprec(x)\approx&\frac{1}{\e^2}\iint K\left(\frac{x-y}\e\right)\fpe(y)\dd y,\\
K(z):=&-\frac{1}{2\pi}\int_0^\pi (\CH\ik'*w)(\vec\al\cdot z) \dd\al.
\end{split}
\ee
Using that $\ik$ and $w$ are even it is easy to see that the Radon transform of $K$, $\hat K$, satisfies
\be
\hat K(\al,p)=(\ik * w)(p),\ |\al|\le\pi,p\in\br,
\ee
so $K$ is radial and compactly supported. 

The following definition is in \cite[p. 121]{KN_06} (after a slight modification in the spirit of \cite[p. 172]{Naito2004}).

\begin{definition} Let $\eta>0$. The irrational number $s$ is said to be of type $\eta$ if for any $\eta_1>\eta$, there exists $c(s,\eta_1)>0$ such that
\be\label{type ineq}
m^{\eta_1}\langle ms\rangle \ge c(s,\eta_1) \text{ for any } m\in\mathbb N.
\ee
\end{definition}
See also \cite{Naito2004}, where the numbers which satisfy \eqref{type ineq} are called $(\eta-1)$-order Roth numbers. It is known that $\eta\ge1$ for any irrational $s$. The set of irrationals of each type $\eta \ge 1$ is of full measure in the Lebesgue sense \cite{Naito2004}.

\begin{definition}\label{def:gp} A point $x_0\in\br^2$ is said to be generic if the following assumptions are satisfied:
\begin{enumerate}
\item No line, which is tangent to $\s$ at a point where the curvature of $\s$ is zero, passes through $x_0$;
\item The line through the origin and $x_0$ is not tangent to $\s$; 
\item The quantity $\kappa|x_0|$ is irrational and of some finite type $\eta_\star$; and
\item If $x_0\in\s$, then the quantity $\kappa \vec\al_0^\perp\cdot x_0$, where $\al_0^\perp$ is a unit vector tangent to $\s$ at $x_0$, is irrational and of some finite type $\eta_0$.
\end{enumerate}
\end{definition}

\begin{figure}[h]
{\centerline
{\epsfig{file={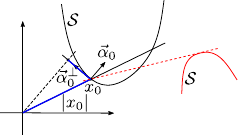}, width=7cm}}}
\caption{Illustration of Definition~\ref{def:gp}. The red segment of $\s$ and the red dashed line illustrate the situation prohibited by (1). The two blue line segments are involved in (3) and (4).}
\label{fig:x0}
\end{figure}

See Figure~\ref{fig:x0} for an illustration of Definition~\ref{def:gp}. In the rest of the paper, we consider only generic points $x_0$. Clearly, the set of generic $x_0$ is dense in the plane. 

By linearity of the measurement process described by \eqref{data_eps} and the inversion formula \eqref{recon-orig} (which correspond to the steps $f\to \hfe$ and $\hfe\to \frec$, respectively), we can ignore the original function $f$ (which has jumps only across smooth curves) and consider the reconstruction of only the perturbation $\fpe$: $\fpe\to\hfpe\to\fprec$. Therefore, henceforth we consider only the reconstruction of $\fpe$. 
Our main result is the following theorem. 

\begin{theorem}\label{main-res} Suppose 
\begin{enumerate}
\item $f$ satisfies assumptions \ref{ass:f}; 
\item the detector aperture function $w$ satisfies assumptions \ref{ass:w}; 
\item the interpolation kernel $\ik$ satisfies assumptions \ref{ass:ik}; 
\item in a neighborhood of any $x\in\s$, the perturbation $H_0$ satisfies assumptions \ref{ass:H0}; and 
\item the point $x_0$ is generic. 
\end{enumerate}
If either \textcolor{black}{$\bt>\max(\eta_0+2,\eta_\star+1)$} and $x_0\in\s$ or \textcolor{black}{$\bt>\max(\eta_0+1,(\eta_\star/2)+1)$} and $x_0\not\in\s$, one has:
\be\label{full conj}
\fprec(x_0+\e\check x)=\frac1{\e^2}\iint K\left(\frac{(x_0+\e\check x)-y}\e\right)\fpe(y)\dd y+O(\e^{1/2}\ln(1/\e)),\ \e\to0,
\ee 
where $K$ is given by \eqref{via_kernel_lim} and the big-$O$ term is uniform with respect to $\check x$ in any compact set. 
\end{theorem}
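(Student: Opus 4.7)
The plan is to turn \eqref{via_kernel} into a leading term matching the integral in \eqref{via_kernel_lim} plus controllable error, exactly along the lines flagged in the introduction: reduce to the two inequalities denoted \eqref{extra terms I} and \eqref{extra terms II}, and then estimate the first via oscillatory-integral surrogates of the exponential sums. Concretely, I would apply Poisson summation (or equivalently, Fourier series expansion in the sampling indices) to both the angular sum over $k$ (step $\Delta\al=\kappa\e$) and the detector sum over $j$ (step $\Delta p=\e$) in \eqref{via_kernel}. The $m=0$ Fourier mode reproduces the continuum kernel $K$ of \eqref{via_kernel_lim} convolved against $\fpe$; all other Fourier modes carry an oscillatory factor of the form $\exp(2\pi i m\,\vec\al\cdot x/(\kappa\e))$ (and similarly from the $j$-variable), and collectively form the remainder. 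The decay estimate $\tilde\ik(\la)=O(|\la|^{-(\bt+1)})$ together with the smoothness of $w$ provide absolute convergence of the sum over $m$ once $\bt$ is large enough, reducing the task to estimating each individual mode.

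Next, I would substitute the explicit form \eqref{main-fn} of $\fpe$. Integrating the characteristic $\chi(t,H_\e(u))$ against the mollifier $w$ over the normal coordinate $t$ produces a BV amplitude in $u$ (because $H_\e$ is BV by Assumption~\ref{ass:H0}(2)) multiplied by an oscillatory factor whose phase comes from the composition of the Radon directional factor $\vec\al\cdot y(u)$ with the Poisson frequency $m/\e$. After the change of variables $u\mapsto u/\e^{1/2}$ dictated by the scaling in \eqref{main-fn}, each Fourier mode becomes a product of an exponential sum over $k$ and a one-dimensional oscillatory integral in $u$ against a BV amplitude, with phases built from $\vec\al_k\cdot x_0/\e$ and $\vec\al_k\cdot y(u)/\e$. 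This is the structure behind \eqref{extra terms I}.

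The core estimates then split by case exactly as in the outline. In case (C), the phase $\vec\al\cdot y(u)$ has no critical point in $u$, so one integration by parts using the BV bound on the amplitude (which replaces the usual $C^1$ derivative bound by total variation) produces decay $O(\e)$ in each mode, and the sum over modes is controlled by the genericity of $x_0$ through the Roth-type inequality \eqref{type ineq} with exponent $\eta_\star$. In case (B), the phase has one non-degenerate stationary point coming from the tangent line to $\s$ through $x_0$; van der Corput's lemma for BV amplitudes yields $O(\e^{1/2})$ per mode, and summability again uses the type-$\eta_\star$ assumption, explaining the threshold $\bt>(\eta_\star/2)+1$ in that case. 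In case (A), $x_0\in\s$ is itself the stationary point, so the oscillatory integral is borderline; summing in $m$ under the constraint \eqref{type ineq} for both $\kappa|x_0|$ and $\kappa\vec\al_0^\perp\cdot x_0$ produces the logarithmic loss $\ln(1/\e)$ and the stronger threshold $\bt>\max(\eta_0+2,\eta_\star+1)$. The second inequality \eqref{extra terms II} is the easier Euler–Maclaurin-type estimate comparing the $m=0$ angular sum with its integral, and is handled directly via the smoothness of the integrand in $\al$.

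The main obstacle is the very limited regularity of the amplitude: $H_0$ is only BV, so I cannot repeatedly integrate by parts in $u$, which is the standard tool for showing fast decay of oscillatory integrals. The workaround is to exploit the tangential scaling $\e^{1/2}$ in the definition of $H_\e$, which exactly balances the stationary-phase contribution so that $\TV(H_\e)\cdot\e^{1/2}=O(1)$ uniformly in $\e$, and to pair each single integration by parts (or single van der Corput step) with the genericity-based summability in $m$. Matching these two ingredients — a BV-adapted oscillatory integral bound with decay $\e^{1/2}$ per mode in cases (A)/(B), and a Roth-type bound on the angular exponential sum governed by $\eta_\star,\eta_0$ — is what produces the uniform $O(\e^{1/2}\ln(1/\e))$ remainder in \eqref{full conj}, independently of any Hölder regularity of the rough boundary $\s_\e$.
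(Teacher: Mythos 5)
Your high-level skeleton matches the paper's: reduce \eqref{full conj} to \eqref{extra terms I} and \eqref{extra terms II}, expand in Fourier modes $m$ of the periodized detector kernel, split into cases (A)--(C), and balance a BV-based bound on the amplitudes against Diophantine summability in $m$. But three of your key mechanisms are misidentified, and the first two are genuine gaps.

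First, you have swapped the roles of $\eta_0$ and $\eta_\star$ in cases (B) and (C). You attribute the threshold $\bt>(\eta_\star/2)+1$ to case (B) via ``van der Corput at the tangency stationary point,'' and $\eta_\star$-summability to case (C). In the paper it is the opposite: in case (B) the tangency concentrates the angular support of $A_m(\al,\e)$ near $\al=0$, so the dangerous resonance is the first point $\al_{l_0+1}^*$ where $\kappa\phi'(\al)\in\mathbb Z$ near $\al=0$, whose distance from $0$ is bounded below via the type $\eta_0$ of $\kappa\vec\al_0^\perp\cdot x_0$ (see \eqref{ang0 bnd}, \eqref{rem sum_l v2}); this gives $\bt>\eta_0+1$. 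In case (C) the coefficients $A_m$ are spread uniformly over $\Omega$, and the obstruction is the stationary point $\al_\star$ of the angular phase $\phi(\al)=-m\vec\al\cdot x_0$, where $\phi''$ vanishes and the lower bound $\langle m\kappa\vec\al_\star^\perp\cdot x_0\rangle$ (type $\eta_\star$) enters; this gives $\bt>(\eta_\star/2)+1$. The swap matters because it shows the Diophantine conditions are not generic ``summability in $m$'' inputs: each one controls a specific geometric resonance of the exponential sum over $k$, not of the $u$-integral. Relatedly, the $u$-integral is not an oscillatory integral at all --- $\tilde\psi_m$ is a localized bump of width $O(\e)$ in $R(u,\al)$, so the case split governs how the lines through $x_0$ meet $\s$ (transversally, tangentially, or with $x_0$ on $\s$) and hence the size and angular concentration of $A_m$, not a stationary-phase count.

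Second, your treatment of the exponential sum over $k$ is incomplete at exactly the points the paper's Remark~\ref{rem:key} identifies as the crux. Poisson summation (or one integration by parts) per mode is not enough: the main contributions to $\sum_k e(-m\vec\al_k\cdot x/\e)A_m(\al_k,\e)$ come from the resonant angles $\al_l^*$ where $\kappa\phi'(\al_l^*)=l\in\mathbb Z$, and there are $O(|m|)$ of them for each $m$. The paper's Lemma~\ref{lem:sum1 est} converts the sum to a weighted integral only on intervals $I_l$ where $\kappa\phi'$ stays within $1/2$ of a fixed integer, and Lemma~\ref{lem:int1 est} then integrates by parts against $\sin(\pi\kappa\phi_l'(\al))$, whose lower bound near $\al_l^*$ is what forces both the $\e^{1/2}$-window TV bounds \eqref{g_e ass 2} and the Diophantine lower bounds on $|\al_{l_0}^*|$ and on $\phi''_{\text{mn}}$ near $\al_\star$. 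Your sketch never confronts these resonances. Third, \eqref{extra terms II} is not handled ``via the smoothness of the integrand in $\al$'': $A_0(\al,\e)$ oscillates at scale $\e^{1/2}$ in $\al$ because $H_0(\e^{-1/2}\theta(s,\al))$ sits in the limits of integration, and the Euler--Maclaurin comparison requires the total-variation bound $\TV(\e A_0(\cdot,\e),[-\pi/2,\pi/2])=O(\e^{1/2}\ln(1/\e))$, which is again a consequence of assumption~\ref{ass:H0}(2), not of smoothness.
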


Since $K$ is compactly supported, we get the following corollary. 
\begin{corollary}\label{main-cor} Under the assumptions of Theorem~\ref{main-res}, if $x_0\not\in\s$, one has
\be\label{away res}
\fprec(x_0+\e\check x)=O(\e^{1/2}\ln(1/\e)),\ \e\to0,
\ee 
where the big-$O$ term is uniform with respect to $\check x$ in any compact set. 
\end{corollary}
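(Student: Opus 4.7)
The plan is to deduce Corollary~\ref{main-cor} directly from Theorem~\ref{main-res} by showing that when $x_0\not\in\s$, the kernel convolution term on the right-hand side of \eqref{full conj} vanishes identically for all sufficiently small $\e$. Once this is established, \eqref{away res} is an immediate consequence. The only real ingredient is a support analysis of $\fpe$ and $K$; there is no delicate oscillatory or aliasing estimate involved, since Theorem~\ref{main-res} has already absorbed all of that difficulty.

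First, I analyze the support of $\fpe$. By the construction \eqref{main-fn}, $\fpe(x)$ is nonzero only at points of the form $x=y(u)+t\vec\theta(u)$ with $u\in\mathcal I$ and $t$ lying between $0$ and $H_\e(u)$. Assumption~\ref{ass:H0}(1) gives $|H_\e(u)|\le\e\Vert H_0\Vert$, so $\text{supp}(\fpe)$ is contained in the tubular $\e\Vert H_0\Vert$-neighborhood of $\s$. Next, as noted immediately after \eqref{via_kernel_lim}, the kernel $K$ is radial and compactly supported, so one may fix $R>0$ with $\text{supp}(K)\subset\{z\in\br^2:|z|\le R\}$. Then $K(((x_0+\e\check x)-y)/\e)$ vanishes unless $|y-(x_0+\e\check x)|\le R\e$.

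Now I combine these two support restrictions. Fix any compact set $\mathcal K\subset\br^2$ containing $\check x$ and set $M:=\sup_{\check x\in\mathcal K}|\check x|$. Since $x_0\not\in\s$, the distance $d:=\text{dist}(x_0,\s)$ is strictly positive. For any $y\in\text{supp}(\fpe)$ the triangle inequality gives $|y-x_0|\ge d-\e\Vert H_0\Vert$, while any $y$ contributing to the kernel factor satisfies $|y-x_0|\le(R+M)\e$. Choose $\e_0>0$ so small that $(R+M+\Vert H_0\Vert)\e_0<d$. Then for every $\e<\e_0$ and every $\check x\in\mathcal K$ these two conditions on $y$ are incompatible, so the integrand in \eqref{full conj} is identically zero, and the kernel convolution integral vanishes. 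Substituting this into \eqref{full conj} yields \eqref{away res}, and uniformity over $\check x\in\mathcal K$ is automatic because the threshold $\e_0$ depends only on $\mathcal K$ (and on $d$, $R$, and $\Vert H_0\Vert$, none of which depend on $\check x$). Since this is essentially a one-line consequence of the theorem plus compact support, I do not anticipate any obstacle beyond carefully tracking the two radii $\e\Vert H_0\Vert$ and $R\e$ against the fixed positive distance $d$.
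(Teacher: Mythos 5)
Your argument is correct and is exactly the one the paper intends: the text derives the corollary in one line ("Since $K$ is compactly supported...") via precisely this support comparison between the $O(\e)$-tube around $\s$ containing $\text{supp}(\fpe)$ and the $O(\e)$-ball around $x_0+\e\check x$ on which the kernel is nonzero. Your write-up simply makes the radii and the uniformity in $\check x$ explicit.
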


The estimate in Corollary \ref{main-cor} is fairly tight, since aliasing artifacts away from a jump of $f$, even when the jump is across a smooth curve, are at least of magnitude $\sim \e^{1/2}$ \cite{Katsevich_aliasing_2023}.

\section{Reduction of Theorem~\ref{main-res} to two inequalities}\label{sec:beg proof}

Due to the linearity of the Radon transform and the inversion formula we may assume that $\text{supp}(f)$ is sufficiently small and the curve $\s$ is sufficiently short. Therefore, the support of $\fpe$ (along $\s$) is small, and $\s_\e$ is short as well. For convenience, we state two local versions of our main result.

\begin{lemma}\label{lem:main-res-A} Suppose that the assumptions of Theorem~\ref{main-res} are satisfied. Suppose the support of $\fpe$ is sufficiently small, $\s$ is sufficiently short, and case (A) holds. If \textcolor{black}{$\bt>\max(\eta_0+2,\eta_\star+1)$,} then \eqref{full conj} holds, where $K$ is given by \eqref{via_kernel_lim}, and the big-$O$ term is uniform with respect to $\check x$ in any compact set. 
\end{lemma}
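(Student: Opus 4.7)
The starting point is the exact formula \eqref{via_kernel} for $\fprec(x_0+\e\check x)$; subtracting from it the continuous convolution on the right-hand side of \eqref{full conj} (which is what \eqref{via_kernel} would give if the sums $\sum_k$ and $\sum_j$ were replaced by $\Delta\al\int\dd\al$ and $\Delta p\int\dd p$, respectively), the plan is to apply Poisson summation in both variables. In the rescaled angular variable $\al_k=\bar\al+k\kappa\e$ this produces oscillatory factors of the form $\exp(2\pi i m\al/(\kappa\e))$, and in $p_j=\bar p+j\e$ it produces $\exp(2\pi i n p/\e)$. The $(m,n)=(0,0)$ term reproduces exactly the convolution integral, so the task reduces to controlling the sum of the remaining extra terms indexed by $(m,n)\in\BZ^2\setminus\{0\}$ by $O(\e^{1/2}\ln(1/\e))$.

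A first split separates the extra terms into the two families that appear as inequalities \eqref{extra terms I} (with $n=0$, $m\ne0$) and \eqref{extra terms II} (with $n\ne 0$). Each extra term carries the Fourier coefficients of $\tilde\ik$ and the Fourier transform of $w$, which decay like $|m|^{-(\bt+1)}$ and $|n|^{-(\bt+1)}$ by assumptions~\ref{ass:w} and \ref{ass:ik}; the $n\ne 0$ family benefits from decay in both variables simultaneously, and summability forces only the weaker requirement $\bt>\eta_\star+1$ in combination with the type condition on $\kappa|x_0|$. The $n=0$, $m\ne0$ family is harder: after integrating against $w$ in $p$, each term reduces to a line integral over the local parametrization $\mathcal I\ni u\to y(u)+H_\e(u)\vec\theta(u)$ of $\s_\e$ carrying the oscillating factor $\exp(2\pi i m\vec\al_0^\perp\cdot(y(u)+H_\e(u)\vec\theta(u))/(\kappa\e))$ at leading order, where $\vec\al_0^\perp$ is tangent to $\s$ at $x_0$.

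The heart of the argument is estimating this oscillatory line integral uniformly in $m$ and summing on $m$. The plan is: (i) change variables $u=\e^{1/2}v$, so that $H_\e(u)\vec\theta(u)=\e H_0(v;\e)\vec\theta(\e^{1/2}v)$ and the phase becomes an affine function of $v$ plus a smooth $O(\e^{1/2})$ correction coming from the curvature of $\s$, whose stationary point near $v=0$ reflects the fact that $x_0\in\s$; (ii) near that stationary point apply the classical Van der Corput bound of order $|m|^{-1/2}$, while away from it integrate by parts in the Riemann--Stieltjes sense against $H_0$ (assumption~\ref{ass:H0}(2)) via a second mean-value argument, so that derivatives of the amplitude in $u$ are replaced by the constant $V_0$; (iii) invoke the type condition $m^{\eta_0}\langle m\kappa\vec\al_0^\perp\cdot x_0\rangle\ge c$ to rule out small denominators when summing on $m$; and (iv) truncate the sum at $|m|\lesssim\e^{-1/2}$, where the tail is absorbed using the $|m|^{-(\bt+1)}$ decay and $\bt>\eta_0+2$.

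The main obstacle is the combination of steps (ii) and (iii): because $H_0$ is only of bounded variation, the usual $\e^{1/2}$ smallness one would extract from a smooth amplitude by integration by parts in $u$ is unavailable, and the required oscillatory cancellation must be squeezed entirely out of $V_0$ together with the quadratic contribution of the phase. Balancing the BV loss against the Roth-type estimate on $m\kappa\vec\al_0^\perp\cdot x_0$, summed over $|m|\le\e^{-1/2}$, is precisely what produces the $\ln(1/\e)$ factor in the final bound and pins down the regularity threshold $\bt>\eta_0+2$; the complementary $\bt>\eta_\star+1$ threshold comes from the $n\ne0$ family of step~(i). Together these yield the stated remainder $O(\e^{1/2}\ln(1/\e))$ uniformly in $\check x$ on compact sets.
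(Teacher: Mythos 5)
Your high-level skeleton (main term plus oscillatory remainders, decay of the kernels, Diophantine conditions, BV of $H_0$) points in the right direction, but the mechanism you propose for the hard part is not the paper's and, as described, would fail. First, a structural mismatch: in the paper the index $m$ in \eqref{extra terms I} comes from the Fourier series of the $1$-periodic kernel $\psi(q,t)=\sum_j(\CH\ik')(q-j)w(j-q-t)$, i.e.\ from the discretization in $p$ alone; the angular sum over $k$ is \emph{never} Poisson-summed. Inequality \eqref{extra terms II} is not an ``$n\ne0$ family'' at all --- it is the $m=0$ Riemann-sum-versus-integral error in $\al$, controlled purely by $\TV(g_{0,\e},[-\pi/2,\pi/2])$ with no oscillation and no type condition. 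Both thresholds $\eta_0$ and $\eta_\star$ arise inside the analysis of \eqref{extra terms I}: $\eta_0$ because the resonant angles $\al_l^*$ (where $\kappa\phi'(\al)\in\BZ$, $\phi(\al)=-m\vec\al\cdot x_0$) can approach $\al=0$, where the amplitude $\e A_m(\al,\e)\sim\e^{1/2}(1+\e^{-1/2}|\al|)^{-1}$ peaks; and $\eta_\star$ from the exceptional interval $I_\star$ around $\al_\star$ with $\phi''(\al_\star)=0$, which your outline does not address at all.

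The decisive gap is your step (ii). You place $H_\e(u)$ inside the phase of an oscillatory integral along $\s_\e$, giving a phase term $mH_0(\e^{-1/2}u;\e)\,\vec\al_0^\perp\cdot\vec\theta(u)/\kappa$ with $|m|$ as large as a negative power of $\e$. Since $H_0$ is only of bounded variation (not even continuous), this ``phase'' is nowhere differentiable in general; Van der Corput requires monotonicity or lower bounds on derivatives of the phase, and a second-mean-value/Riemann--Stieltjes argument against a BV function yields only $O(V_0|I|)$ with no gain in $m$. There is no oscillatory-integral estimate that extracts $|m|^{-1/2}$ from such a phase, and this is precisely why the paper avoids it: in \eqref{A simpl orig} the perturbation $H_0$ appears only as the upper limit of the $\hat t$-integration, i.e.\ in the \emph{amplitude}, while the exploited oscillation is the smooth, explicit factor $e(-m\vec\al_k\cdot x_0/\e)$ in the discrete angular sum. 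Assumption~\ref{ass:H0}(2) is then used (via \eqref{H0 rel est}--\eqref{var est (1)}) to bound $\TV(g_{m,\e},U)$ on intervals $|U|=\e^{1/2}$, which is exactly the input needed to convert the exponential sum to an integral (Lemma~\ref{lem:sum1 est}) and to integrate that integral by parts (Lemma~\ref{lem:int1 est}). To repair your argument you would have to move $H_0$ out of the phase and redo the cancellation in the angular variable, which is essentially the paper's route.
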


\begin{lemma}\label{lem:main-res-BC} Suppose that the assumptions of Theorem~\ref{main-res} are satisfied. Suppose the support of $\fpe$ is sufficiently small, $\s$ is sufficiently short, and $x_0\not\in\s$. Suppose \textcolor{black}{$\bt>\eta_0+1$} if case (B) holds and \textcolor{black}{$\bt>\max(2,(\eta_\star/2)+1)$} if case (C) holds. Then \eqref{full conj} holds, where $K$ is given by \eqref{via_kernel_lim}, and the big-$O$ term is uniform with respect to $\check x$ in any compact set.
\end{lemma}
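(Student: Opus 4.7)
\ \ The plan is to substitute the discrete data \eqref{data_eps} into the reconstruction formula \eqref{via_kernel} and split the difference from the continuous convolution \eqref{via_kernel_lim} into two error contributions: a ``radial'' term from replacing the $j$-sum by a $p$-integral, and an ``angular'' term from replacing the $k$-sum by an $\al$-integral (the two inequalities \eqref{extra terms I} and \eqref{extra terms II} flagged in section~\ref{sec:beg proof}). Since $x_0\not\in\s$ and $K$ is compactly supported, the factor $K((x_0+\e\check x-y)/\e)$ is supported in an $O(\e)$-ball around $x_0$, while $\mathrm{supp}(\fpe)$ is an $O(\e)$-tube around $\s$; hence the convolution term in \eqref{via_kernel_lim} vanishes identically for all sufficiently small $\e$, and it suffices to show $\fprec(x_0+\e\check x)=O(\e^{1/2}\ln(1/\e))$.

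For the radial error I would apply Poisson summation in $j$ (equivalently, use exactness~\ref{ass:ik}(2) and the decay $\tilde\ik(\la)=O(|\la|^{-\beta-1})$). The sum-minus-integral collapses to $\sum_{m\ne0}$ of translated copies of $\CH\ik'*w$ at arguments shifted by $m/\e$; exchanging the $m$-sum with integration against $\fpe$ and invoking assumption~\ref{ass:H0}(2), the variation along $\s_\e$ in the tangential variable is controlled by $V_0$, while the irrational type $\eta_0$ keeps low-denominator resonances away from zero. The decay of $\tilde\ik$ closes the series under $\beta>\eta_0+1$ in case (B) and the milder $\beta>2$ in case (C). The angular term is the substantive one. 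Poisson summation in $k$ recasts it as
\begin{equation*}
\sum_{m\in\BZ}\int e^{2\pi i m\al/(\kappa\e)}F_\e(\al;x_0,\check x)\,d\al,
\end{equation*}
where $F_\e$ is the $y$-integral of the $\al$-continuous reconstruction kernel against $\fpe$. The combined phase is $m\al/(\kappa\e)+\vec\al\cdot(y(u)-x_0)/\e$. In case (C), no tangent line through $x_0$ exists, so $\pa_\al$ of the phase never vanishes on $\mathrm{supp}(\fpe)$; one integration by parts in $\al$ yields a factor $\e/|m+\kappa\vec\al^\perp\cdot(y(u)-x_0)|$, and the type-$\eta_\star$ condition on $\kappa|x_0|$ forces this denominator to be $\gtrsim |m|^{1-\eta_\star}$ for $m\ne0$, producing the target rate once $\beta>\eta_\star/2+1$. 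In case (B), a tangent line through $x_0$ creates an isolated stationary point of the $\al$-phase whose second derivative is bounded below by the curvature of $\s$ (nonzero by Definition~\ref{def:gp}(1)); van der Corput yields the extra $\e^{1/2}$ at the resonant $m$, the type $\eta_0$ handles the remaining harmonics, and the series closes for $\beta>\eta_0+1$.

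The principal obstacle, especially in case (C), is the coupling between high-frequency oscillation in $\al$ (governed by $\eta_\star$) and the roughness in $u$ of $H_0$. Naive integration by parts in $\al$ costs a factor $|m|$ per step, which pure $\al$-oscillation cannot absorb once summed in $m$; the BV hypothesis on $H_0$ must be exploited by a complementary step that integrates against the Stieltjes differential $dH_0(u)$ after the $\al$-step, and one must handle angles $\al_k$ where $\vec\al_k\cdot x_0$ lands close to $\e\BZ$ carefully to avoid double-counting near-resonances. Arranging this two-step argument so that the $\e^{1/2}\ln(1/\e)$ rate is uniform in $\check x$ and matches the sharper thresholds in case (B) versus case (C) is where the main technical effort lies.
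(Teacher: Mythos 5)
Your high-level skeleton is right (reduction to \eqref{extra terms I}--\eqref{extra terms II}, the observation that the convolution term vanishes for $x_0\not\in\s$, Fourier/Poisson duality in both discretization directions, the need to couple the Diophantine conditions with the BV hypothesis), but the core technical content — the part that actually produces the thresholds $\bt>\eta_0+1$ and $\bt>\max(2,(\eta_\star/2)+1)$ — is misattributed in both cases. In case (C) your claim that ``$\pa_\al$ of the phase never vanishes on $\mathrm{supp}(\fpe)$, so one integration by parts suffices'' is false: the relevant phase for the angular exponential sum is $\phi(\al)=-m\vec\al\cdot x_0$ (with $m$ the \emph{detector-direction} Fourier index of the periodized kernel $\psi$, not the Poisson dual of $k$), and its second derivative vanishes at $\al_\star$ where $\vec\al_\star\cdot x_0=0$ — a direction that has nothing to do with tangency to $\s$ and generically lies inside the angular range. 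The resonant angles $\al_l^*$ (where $\kappa\phi'(\al)\in\BZ$) accumulate and cannot be resolved by curvature of the phase precisely near $\al_\star$; this is where the exceptional-interval analysis and the type-$\eta_\star$ condition on $\kappa|x_0|$ enter (via $\langle m\kappa\vec\al_\star^\perp\cdot x_0\rangle$), giving $\bt>(\eta_\star/2)+1$. Conversely, in case (B) the threshold $\bt>\eta_0+1$ does not come from a van der Corput bound at a stationary point created by the tangent line: the tangency makes the reparametrization $\CA(\theta)\asymp\theta^2$ degenerate, so the coefficients behave like $|A_m(\al,\e)|\lesssim\rho(m)\al^{-1/2}$ as $\al\to0^+$, and the difficulty is the accumulation of resonances $\al_l^*$ near $\al=0$ combined with this singularity; the lower bound $|\al_{l_0+1}^*|\gtrsim\langle m\kappa\vec\al_0^\perp\cdot x_0\rangle/|m|$ is what brings in $\eta_0$. (The stationary point $\al_\star$ is outside the angular support of $\fpe$ in case (B), which is why $\eta_\star$ does not appear there.) Also, your quantitative claim that the type condition forces the denominator to be $\gtrsim|m|^{1-\eta_\star}$ is off: \eqref{type ineq} gives $\langle ms\rangle\gtrsim m^{-\eta_1}$ for every $\eta_1>\eta_\star$.

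Two further gaps. First, a single Poisson summation in $k$ with one dual index does not capture the double resonance structure: the detector index $m$ and the angular dual index $l$ interact, with $O(|m|)$ resonant values of $l$ per $m$ (since $|\kappa\phi'(\al)|\le\kappa|m||x_0|$), and the sums over both must be closed simultaneously; your sketch leaves the detector-direction oscillation buried inside $F_\e$ and never confronts this. Second, you correctly flag the coupling between $\al$-oscillation and the roughness of $H_0$ as the main obstacle, but the mechanism you propose (Stieltjes integration against $dH_0(u)$ after the $\al$-step) is not developed and is not what is needed: what closes the argument is a total-variation bound on $\al\mapsto\e A_m(\al,\e)$ over intervals of length $O(\e^{1/2})$, obtained by telescoping the differences $H_0(\tilde\theta_k)-H_0(\tilde\theta_{k-1})$ against the concentration of $\tilde\psi_m$ and invoking $\TV(H_0,I)\le V_0|I|$; this TV bound is exactly the error term in the sum-to-integral conversion and also controls the integrated-by-parts terms. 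Without it, the ``two-step argument'' you defer to cannot produce the $\e^{1/2}\ln(1/\e)$ rate.
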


Obviously, Theorem~\ref{main-res} follows from the above two lemmas.

Following \cite{Katsevich2023a, Katsevich2022a}, introduce the function (see \eqref{via_kernel})
\be\label{recon-ker-0}
\psi(q,t):=\sum_j (\CH\ik')(q-j)w(j-q-t).
\ee
Then
\be\label{psi-props-0}\begin{split}
&\psi(q,t)=\psi(q+1,t),\  |\psi(q,t)|\le c(1+t^2)^{-1},\ q,t\in\br.
\end{split}
\ee
By \eqref{psi-props-0}, we can represent $\psi$ in terms of its Fourier series:
\be\label{four-ser}\begin{split}
\psi(q,t)&=\sum_m \tilde\psi_m(t) e(-mq),\ e(q):=\exp(2\pi i q),\\
\tilde\psi_m(t) &=\int_0^1 \psi(q,t)e(mq)\dd q=\int_\br (\CH\ik')(q) w(-q-t)e(mq)\dd q.
\end{split}
\ee

Introduce the function $\rho(s):=(1+|s|)^{-\bt}$, $s\in\br$. For convenience, throughout the paper we use the following convention. If a constant $c$ is used in an equation, the qualifier ‘for some $c>0$’ is assumed. If several $c$ are used in a string of (in)equalities, then ‘for some’ applies to each of them, and the values of different $c$’s may all be different. 

\begin{lemma}[\cite{Katsevich2022a}]\label{lem:psi psider}
Under assumptions~\ref{ass:w} and \ref{ass:ik}, one has
\be\label{four-coef-bnd}
|\tilde\psi_m(t)|,|\tilde\psi_m'(t)| \le c\rho(m)(1+t^2)^{-1},\ t\in\br,m\in\mathbb Z.
\ee
\end{lemma}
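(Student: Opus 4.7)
The plan is to establish the two decays separately and combine them. Set $g := \CH\ik'$; by assumption~\ref{ass:ik}(1), its Fourier transform satisfies $\hat g(\la) = 2\pi|\la|\tilde\ik(\la) = O(|\la|^{-\bt})$, and $\hat g\in L^1(\br)$ since $\bt>1$.

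For bounded $|t|$, I would derive $|\tilde\psi_m(t)|\le c\rho(m)$ via Plancherel. A direct calculation (treating $w(q+t)e(mq)$ as a modulation-translation of $w$) gives
\be
\tilde\psi_m(t) = \int_\br \hat g(\la-m)\,\hat w(\la)\,e^{-2\pi i t\la}\,\dd\la.
\ee
Splitting the integral at $|\la|=|m|/2$: on $\{|\la|\le|m|/2\}$ one has $|\la-m|\ge|m|/2$, so $|\hat g(\la-m)|\le c|m|^{-\bt}$, and $\hat w\in L^1(\br)$ bounds the remaining factor; on $\{|\la|\ge|m|/2\}$ the decay $\hat w(\la)=O(|\la|^{-\lceil\bt\rceil-1})$, coming from $w\in C_0^{\lceil\bt\rceil+1}$, gives $|\hat w(\la)|\le c|m|^{-\lceil\bt\rceil-1}$, and $\hat g\in L^1$ bounds the rest. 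Both contributions are $O(\rho(m))$. On any compact $t$-interval $(1+t^2)^{-1}$ is bounded below, so this yields the claim for bounded $|t|$.

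For $|t|$ large, I would exploit the compact support of $\ik$: when $q\notin\text{supp}(\ik)$, integration by parts in the Hilbert transform definition gives the non-singular representation
\be
(\CH\ik')(q) = -\frac{1}{\pi}\int \frac{\ik(s)}{(q-s)^2}\,\dd s.
\ee
Substituting this and changing variables $q=u-t$, with $T:=t+s$ and $\phi_T(u):=w(u)/(u-T)^2$, one gets
\be
\tilde\psi_m(t) = -\frac{e(-mt)}{\pi}\int \ik(s) \int \phi_T(u)\,e(mu)\,\dd u\,\dd s.
\ee
For $|t|$ exceeding the sum of diameters of $\text{supp}(w)$ and $\text{supp}(\ik)$, $|T|\ge|t|/2$ uniformly in $s\in\text{supp}(\ik)$. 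An elementary computation of $\phi_T^{(k)}$ via the Leibniz rule shows $\|\phi_T\|_{C^{\lceil\bt\rceil+1}}\le c/T^2$, the leading power $T^{-2}$ dominating all higher negative powers of $T$. The standard Fourier-decay inequality for compactly supported $C^k$ functions then gives $|\int\phi_T(u)\,e(mu)\,\dd u|\le c\,\|\phi_T\|_{C^{\lceil\bt\rceil+1}}/(1+|m|)^{\lceil\bt\rceil+1}\le c\rho(m)/T^2$. Integrating over $s\in\text{supp}(\ik)$ yields $|\tilde\psi_m(t)|\le c\rho(m)/t^2$, which matches $c\rho(m)(1+t^2)^{-1}$ for large $|t|$.

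The bound on $\tilde\psi_m'(t)$ follows by differentiating under the integral, replacing $w$ by $w'\in C_0^{\lceil\bt\rceil}(\br)$ and repeating both estimates with the same exponent $\bt$; the loss of one derivative in $w$ is not fatal since $\lceil\bt\rceil\ge\bt$. The main obstacle I anticipate is the coupling of physical-space and spectral decay in the large-$|t|$ regime: the $1/T^2$ factor from $\CH\ik'$ at large argument carries no $m$-decay on its own, while the Fourier argument carries no $t$-decay. The key device is to absorb $(u-T)^{-2}$ into $\phi_T$ before Fourier-transforming in $u$, so that both factors emerge from a single application of the Fourier-decay inequality. Verifying uniformly in $T$ that $\|\phi_T\|_{C^{\lceil\bt\rceil+1}}\le c/T^2$ is the technical calculation that makes the whole argument work.
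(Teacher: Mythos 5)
Your argument is correct. Note that this paper does not prove Lemma~\ref{lem:psi psider} at all --- it is imported verbatim from \cite{Katsevich2022a} --- so there is no in-paper proof to compare against; judged on its own, your two-regime strategy is sound and self-contained. The Fourier-side identity $\tilde\psi_m(t)=\int\hat g(\la-m)\hat w(\la)e^{-2\pi it\la}\dd\la$ with $\hat g(\la)=2\pi|\la|\tilde\ik(\la)=O(|\la|^{-\bt})$ is the natural way to extract the $\rho(m)$ decay (the split at $|\la|=|m|/2$ is exactly the standard convolution estimate), and the non-singular representation $(\CH\ik')(q)=-\frac1\pi\int\ik(s)(q-s)^{-2}\dd s$ off $\text{supp}(\ik)$ correctly produces the $t^{-2}$ decay; absorbing $(u-T)^{-2}$ into the amplitude before applying the $C^{\lceil\bt\rceil+1}$ Fourier-decay bound is precisely the right device to get both decays simultaneously, and the derivative bound indeed only costs one derivative of $w$, which $\lceil\bt\rceil\ge\bt$ absorbs. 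Two small housekeeping points: you implicitly use $\bt>1$ (for $\hat g,\hat{w'}\in L^1$), which the lemma statement does not announce but which is guaranteed by the hypotheses of Theorem~\ref{main-res} wherever the lemma is used; and the threshold for the large-$|t|$ regime should be chosen so that both $q=u-t\notin\text{supp}(\ik)$ (validating the non-singular kernel) and $|u-T|\gtrsim|t|$ hold uniformly --- your "sum of diameters" condition accomplishes this after a harmless adjustment of constants.
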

By the last lemma, the Fourier series for $\psi$ converges absolutely. From \eqref{data_eps}, \eqref{via_kernel}, \eqref{recon-ker-0},  and \eqref{four-ser}, the reconstructed image becomes
\be\label{recon-ker-v2}
\begin{split}
&\textcolor{black}{\fprec}(x)
=-\frac{\Delta\al}{2\pi}\sum_m \sum_{|\al_k|\le \pi/2} e\left(-m \frac{\vec\al_k\cdot x-\bar p}\e\right)A_m(\al_k,\e), \\
&A_m(\al,\e):=\e^{-2}\iint \tilde\psi_m\left(\frac{\vec\al\cdot (y-x)}\e\right)\textcolor{black}{\fpe}(y)\dd y.
\end{split}
\ee
We will show that 
\begin{align}\label{extra terms I}
&\Delta\al\sum_{m\not=0}\bigg| \sum_{|\al_k|\le\pi/2} e\left(-m \frac{\vec\al_k\cdot x}\e\right)A_m(\al_k,\e)\bigg|=O(\e^{1/2}\ln(1/\e)),
\end{align}
and
\be\label{extra terms II}
\sum_{|\al_k|\le\pi/2} \int_{|\al-\al_k|\le \Delta\al/2}|A_0(\al,\e)-A_0(\al_k,\e)|\dd\al=O(\e^{1/2}\ln(1/\e)),
\ee
where $x=x_0+\e\check x$. The factor $e(m\bar p/\e)$ is dropped because it is independent of $k$.
The main steps in the proof of \eqref{extra terms I} in all three cases are as follows:
\begin{enumerate}
\item Simplify the expression for $A_m(\al,\e)$;
\item Establish generic rates of decay of $A_m(\al,\e)$ for $\al$ away from $\al=0$ and as $m\to\infty$;
\item Using the established rates, identify a set of $m$ and $k$ values, whose contribution to the sum in \eqref{extra terms I} is of magnitude $O(\e^{1/2}\ln(1/\e))$;
\item Show that the sum with respect to the remaining $k$ and $m$ can be approximated by a sum of some integrals, and the cumulative error of the approximation is of magnitude 
$O(\e^{1/2}\ln(1/\e))$. In this step we estimate $\TV(\e A_m(\al,\e),I)$ as $\e\to0$ for various intervals $I\subset [-\pi/2,\pi/2]$;
\item Using integration by parts, show that the cumulative contribution of the integrals is of magnitude 
$O(\e^{1/2}\ln(1/\e))$.
\end{enumerate}

The proof of \eqref{extra terms II} uses steps (1)--(3), and the desired result follows from an estimate of $\TV(\e A_m(\al,\e),[-\pi/2,\pi/2])$ obtained in step (4).

\begin{remark}\label{rem:key} The proof of \eqref{extra terms I} is more difficult, so we concentrate on it first. This proof revolves around a key difference between estimating exponential sums and oscillatory integrals. The main contributions to the latter generally come only from stationary points of the phase, while the main contributions to the former come from points where the derivative of the phase is an integer. For these contributions to be sufficiently small (so that \eqref{extra terms I} holds) requires the coefficients $A_m(\al,\e)$ to be sufficiently nice (e.g., $\TV(\e A_m(\al,\e),[-\pi/2,\pi/2])$ to be small), and the latter property is closely related to how rough the perturbation $\s\to\s_\e$ is. On the other hand, the less regular $H_0$ is, the larger the exponential sums may become. The proofs in this paper strike a balance between the regularity of $H_0$ and the magnitude of $\TV(\e A_m(\al,\e),[-\pi/2,\pi/2])$, $m\in\mathbb Z$.
\end{remark}

\section{Beginning of proof of \eqref{extra terms I} in case (A)}\label{sec:beg A}

By using a partition of unity, if necessary, we can make the following additional assumptions.
\begin{assumptions}\label{ass:S_A} 
$\hspace{1cm}$
\begin{enumerate}
\item $\s$ is a sufficiently short segment, i.e. $\mathcal I=[-a,a]$, where $a>0$ is sufficiently small;
\item $\s$ is parametrized as follows $[-a,a]\ni \theta\to y(\theta)\in\s$, where the interior unit normal to $\s$ at $y(\theta)$ is $\vec\theta=(\cos\theta,\sin\theta)$, $x_0=y(0)$ and $\vec\theta^\perp\cdot y’(\theta)\equiv -|y’(\theta)|<0$, $|\theta|\le a$; 
\item $\s$ has nonzero curvature, $\R(\theta)$, at each point $y(\theta)$, $|\theta|\le a$.
\end{enumerate}
\end{assumptions}

The word ‘interior’ in assumption~\ref{ass:S_A}(2) means that $\vec\theta$ points towards the center of curvature of $\s$ at $y(\theta)$ and $\vec\theta\cdot y^{\prime\prime}(\theta)\equiv -\vec\theta^\perp\cdot y’(\theta)>0$, $|\theta|\le a$. See Figure~\ref{fig:jump} (with $u$ replaced by $\theta$) for an illustration of $\s$ in case (A). 

Transform the expression for $A_m$ in \eqref{recon-ker-v2} by changing variable $y\to (\theta,t)$, where $y=y(\theta)+t\vec\theta$:
\be\label{A-simpl}
\begin{split}
A_m(\al,\e)=&\frac1{\e^2}\int_{-a}^a\int_0^{H_\e(\theta)}\tilde\psi_m\left(\frac{\vec\al\cdot (y(\theta)-x_0)}\e-\vec\al\cdot\check x+(t/\e)\cos(\theta-\al)\right)\\
&\times F(\theta,t)\dd t\dd\theta,\quad F(\theta,t):=\fpe(y(\theta)+t\vec\theta) (\R(\theta)-t),\ 
\end{split}
\ee
where $\R(\theta)-t=\text{det}(\dd y/\dd (\theta,t))>0$. Here $\R(\theta)$ is the radius of curvature of $\s$ at $y(\theta)$, and $0<\R(\theta)<\infty$, $|\theta|\le a$, by Definition~\ref{def:gp}(1).

Change variable $t=\e\hat t$ in \eqref{A-simpl}:
\be\label{A simpl orig}
\begin{split}
A_m(\al,\e)=&\e^{-1}\int_{-a}^a\int_0^{H_0(\e^{-1/2}\theta)}\tilde\psi_m\left(\e^{-1}R(\theta,\al)+h(\hat t,\theta,\al)\right)
F(\theta,\e \hat t)\dd\hat t\dd\theta,\\
R(\theta,\al):=&\vec\al\cdot (y(\theta)-x_0),\ h(\hat t,\theta,\al):=-\vec\al\cdot\check x+\hat t\cos(\theta-\al),
\end{split}
\ee
By assumption~\ref{ass:S_A}(2), $x_0=y(0)$, so the function $R$ becomes
\be\label{bigR1}
R(\theta,\al)=\vec\al\cdot \left(y(\theta)-y(0)\right).
\ee
The convexity of $\s$ and our convention (see assumptions~\ref{ass:S_A}(2, 3)) imply that the nonzero vector $(y(\theta)-y(0))/\theta$ rotates counter-clockwise as $\theta$ increases from $-a$ to $a$ (see Figure~\ref{fig:A_fn}). Thus, for each $\theta\in[-a,a]$ there is $\al=\CA(\theta)\in(-\pi/2,\pi/2)$ such that $\vec\al(\CA(\theta))\cdot \left(y(\theta)-y(0)\right)\equiv0$ and the function $\CA(\theta)$ is injective. By continuity, $\CA(0):=0$.

\begin{figure}[h]
{\centerline
{\epsfig{file=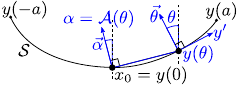, width=6.0cm}}}
\caption{Construction of the function $\CA(\theta)$.}
\label{fig:A_fn}
\end{figure}

\begin{definition}
We say $f(x)\asymp g(x)$ for $x\in U\subset \br^n$ if there exist $c_{1,2}>0$ such that
\be
c_1\le f(x)/g(x)\le c_2\text{ if }g(x)\not=0 \text{ and }f(x)=0\text{ if }g(x)=0
\ee 
for any $x\in U$.
\end{definition}

For convenience, we state here some parts of \cite[Lemma 5.2]{Katsevich2022a} that are needed in this paper. 
Denote $\CD:=[-a,a]\times[-\pi/2,\pi/2]$.
\begin{lemma}\label{lem:aux props}
Under assumptions~\ref{ass:S_A}, one has 
\be\label{r props II}
\CA(\theta)\asymp\theta,\ \CA'(\theta)\asymp1,\ |\theta|\le a;
\ee
and
\be\label{r props III}
\begin{split}
&R(\theta,\al)\asymp \theta(\CA(\theta)-\al),\ 
(\theta,\al)\in\CD.
\end{split}
\ee
\end{lemma}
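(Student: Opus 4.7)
The plan is to observe that both claims reduce to an implicit function theorem argument applied to the function
\[
g(\theta,\al):=\vec\al\cdot Y(\theta),\qquad Y(\theta):=\int_0^1 y'(s\theta)\,\dd s,
\]
so that $y(\theta)-y(0)=\theta Y(\theta)$ and $R(\theta,\al)=\theta\,g(\theta,\al)$. Here $Y$ is $C^3$ and $Y(0)=y'(0)\neq0$. By assumption~\ref{ass:S_A}(2), $y'(0)=(0,-|y'(0)|)$ and $\vec 0=(1,0)$, so $g(0,0)=0$ and $\partial_\al g(0,0)=\vec 0^{\perp}\cdot y'(0)=-|y'(0)|\neq0$. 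The IFT therefore produces a unique $C^3$ function $\tilde{\CA}$ defined near $0$ with $\tilde{\CA}(0)=0$ and $g(\theta,\tilde{\CA}(\theta))\equiv0$. For $\theta\neq 0$, the equation $\vec\al\cdot(y(\theta)-y(0))=0$ is equivalent to $g(\theta,\al)=0$, so by local uniqueness $\tilde{\CA}$ coincides with the geometric $\CA$ on $[-a,a]$ for $a$ small; by continuity the two agree at $\theta=0$ as well.

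For the first claim, I would differentiate $g(\theta,\CA(\theta))=0$ implicitly. Since $\partial_\theta g(0,0)=\tfrac12\,\vec 0\cdot y''(0)$ and $\vec 0\cdot y''(0)>0$ by the convexity/nonzero-curvature assumption~\ref{ass:S_A}(3), this gives
\[
\CA'(0)=\frac{\vec 0\cdot y''(0)}{2|y'(0)|}>0.
\]
Continuity of $\CA'$ then yields $\CA'(\theta)\asymp1$ on $[-a,a]$ for $a$ sufficiently small. Combining with $\CA(0)=0$ and the mean value theorem produces $\CA(\theta)\asymp\theta$.

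For the second claim, I would use that $g(\theta,\CA(\theta))=0$ to write, by a one-variable fundamental-theorem-of-calculus argument in $\al$,
\[
g(\theta,\al)=(\al-\CA(\theta))\int_0^1\partial_\al g\bigl(\theta,\CA(\theta)+s(\al-\CA(\theta))\bigr)\,\dd s.
\]
The integrand $\partial_\al g(\theta,\al)=\vec\al^{\perp}\cdot Y(\theta)$ is continuous on $\CD$ and equals $-|y'(0)|$ at $(0,0)$, so shrinking $a$ ensures it stays bounded away from zero with constant sign on $\CD$. Therefore the integral is bounded above and below in absolute value and has a fixed sign, giving $g(\theta,\al)\asymp \CA(\theta)-\al$. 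Multiplying by $\theta$ yields $R(\theta,\al)\asymp\theta(\CA(\theta)-\al)$, as required.

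The only genuinely substantive step is the identification of the IFT solution with the geometric $\CA$ (which relies on factoring out the $\theta$ that causes $R$ to vanish identically at $\theta=0$); everything else is a direct consequence of the non-degeneracy conditions built into assumptions~\ref{ass:S_A}, so no real obstacle is expected.
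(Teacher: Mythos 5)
Your treatment of \eqref{r props II} is sound and essentially matches the paper's device (the paper also factors out $\theta$, working with $z(\theta)=(y(\theta)-y(0))/\theta$, which is your $Y(\theta)$, and uses $z(0)=y'(0)$, $z'(0)=y''(0)/2$). However, there is a genuine flaw in your justification of \eqref{r props III}. You claim that the integrand $\pa_\al g(\theta,\al)=\vec\al^{\perp}\cdot Y(\theta)$ ``stays bounded away from zero with constant sign on $\CD$'' after shrinking $a$. This is false: shrinking $a$ only restricts $\theta$, while $\al$ still ranges over all of $[-\pi/2,\pi/2]$, and at $(\theta,\al)=(0,\pm\pi/2)$ one has $\vec\al^{\perp}=(\mp1,0)$ and $Y(0)=y'(0)=(0,-|y'(0)|)$, so $\pa_\al g(0,\pm\pi/2)=0$ exactly. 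Indeed, along the averaging segment from $\CA(\theta)$ to $\al$ the quantity $|\beta-\CA(\theta)|$ can exceed $\pi/2$, where $\pa_\al g(\theta,\beta)=\pm|Y(\theta)|\cos(\beta-\CA(\theta))$ vanishes and even changes sign. Continuity plus nonvanishing at the single point $(0,0)$ only buys you a neighborhood in \emph{both} variables, which does not cover $\CD$.

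The conclusion is nevertheless recoverable, because what must be bounded away from zero is the \emph{integral}, not the integrand: writing $g(\theta,\al)=\pm|Y(\theta)|\sin(\CA(\theta)-\al)$ (which is how the paper argues in the analogous Lemma~\ref{aux lem sth}, via $s(\theta,\al)=|z(\theta)|\tfrac{\sin(\CA(\theta)-\al)}{\CA(\theta)-\al}(\CA(\theta)-\al)$), your averaged integrand equals $\mp|Y(\theta)|\,\tfrac{\sin(\al-\CA(\theta))}{\al-\CA(\theta)}$, and $\sin x/x$ is bounded above and below by positive constants for $|x|\le\pi/2+O(a)<\pi$. So you should either compute the $s$-integral explicitly (it is $\tfrac{\sin(\al-\CA(\theta))}{\al-\CA(\theta)}$ times $\mp|Y(\theta)|$) or pass directly to the trigonometric identity; as written, the pointwise lower bound you invoke does not hold on $\CD$.
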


\begin{lemma}\label{lem:first int} One has
\be\label{model int 0}
\int_{\br}\bigl[1+(\theta(\theta-\al)/\e)^2\bigr]^{-1}\dd\theta\le c\e^{1/2}(1+\e^{-1/2}|\al|)^{-1},\
\al\in\br,\e>0.
\ee
\end{lemma}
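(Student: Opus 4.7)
\textbf{Proof plan for Lemma~\ref{lem:first int}.} The estimate is scale-invariant in a natural way, and the right-hand side already suggests the correct rescaling. The plan is to first use a change of variables to kill the $\e$-dependence, then split into two regimes based on the size of the rescaled parameter, and in each regime exploit the quadratic structure of the integrand.

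First, by the evenness of the integrand under $(\theta,\al)\mapsto(-\theta,-\al)$ (since $\theta(\theta-\al)$ changes only by sign, which is squared away), I may assume $\al\ge 0$. Substituting $\theta=\e^{1/2}s$ turns the claim into the $\e$-free inequality
\be\label{rescaled-model}
I(a):=\int_{\br}\frac{\dd s}{1+s^2(s-a)^2}\le \frac{c}{1+a},\qquad a:=\e^{-1/2}\al\ge 0,
\ee
since the measure contributes a factor $\e^{1/2}$ on the left, matching the $\e^{1/2}$ on the right. So the whole lemma reduces to \eqref{rescaled-model}.

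For $a\le 2$, I would use the trivial bounds $1+s^2(s-a)^2\ge 1$ on $|s|\le 3$ and $|s(s-a)|\ge s^2/2$ for $|s|\ge 3$ (using $|s-a|\ge|s|-a\ge|s|/2$), giving $I(a)\le 6+\int_{|s|\ge 3}(1+s^4/4)^{-1}\dd s\le c$, which is bounded by $c/(1+a)$ up to a constant. For $a\ge 2$, I would split $\br=(-\infty,a/2]\cup[a/2,\infty)$ at the minimizer $s=a/2$ of the quadratic $s(s-a)$. On the left half, $|s-a|\ge a/2$, so $s^2(s-a)^2\ge a^2 s^2/4$ and
\be
\int_{-\infty}^{a/2}\frac{\dd s}{1+s^2(s-a)^2}\le \int_{\br}\frac{\dd s}{1+(as/2)^2}=\frac{2\pi}{a}.
\ee
On the right half, $|s|\ge a/2$, so by the symmetric bound $s^2(s-a)^2\ge a^2(s-a)^2/4$ the same estimate holds after translating $s\mapsto s-a$. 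Adding the two pieces gives $I(a)\le 4\pi/a\le c/(1+a)$, completing \eqref{rescaled-model}.

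I do not expect any genuine obstacle: the only care needed is to verify that the two bounds (the trivial one for $a\le 2$ and the split-interval one for $a\ge 2$) patch together with a single constant, and that the rescaling is consistent (the factor $\e^{1/2}$ from $\dd\theta$ matches the prefactor on the right, and $(1+\e^{-1/2}|\al|)^{-1}=(1+a)^{-1}$). The underlying geometry is that $s(s-a)$ vanishes at $s=0$ and $s=a$, and the integrand of \eqref{rescaled-model} concentrates in neighborhoods of these two zeros whose widths shrink like $1/a$ as $a\to\infty$, which is exactly the decay rate claimed.
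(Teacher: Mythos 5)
Your proof is correct and takes essentially the same route as the paper's: rescale by $\e^{1/2}$ and split $\br$ according to which zero of $s(s-a)$ is nearby (the paper uses unit neighborhoods of $0$ and $a$ plus the complement, while your single split at the midpoint $a/2$ is a slightly cleaner two-piece variant with explicit constants). The only cosmetic slip is that for $a\le 2$ the claimed bound $|s|-a\ge |s|/2$ fails at $|s|=3$, $a=2$; replacing $|s|/2$ by $|s|/3$, or moving the cutoff to $|s|\ge 4$, fixes this without affecting the conclusion.
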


Lemma~\ref{lem:first int} is proven in subsection~\ref{ssec:first int}. In \eqref{model int 0}, $\theta-\al$ models the difference $\CA(\theta)-\al$, and $\theta(\theta-\al)$ models $R(\theta,\al)$. By \eqref{four-coef-bnd}, the integral in \eqref{model int 0} models the integral in \eqref{A simpl orig} (up to various factors, e.g. $1/\e$). To more accurately reflect \eqref{r props II}, \eqref{r props III}, we could have used $\theta(r\theta-\al)$ with some specific $r>0$ to model $R(\theta,\al)$. However, the integral $\int_{\br}\bigl[1+(\theta(r\theta-\al)/\e)^2\bigr]^{-1}\dd\theta$ and the one in \eqref{model int 0} are of the same order of magnitude for any $r>0$. Therefore, for simplicity, we set $r=1$.

By combining Lemma~\ref{lem:aux props} and \eqref{model int 0}, we find from \eqref{extra terms I}, \eqref{A simpl orig} 
\be\begin{split}\label{extra extra}
\e&\sum_{|m|\ge \e^{-\ga}} \sum_{|\al_k|\le\pi/2} |A_m(\al_k,\e)|\\
&\le c\sum_{|m|\ge \e^{-\ga}}\rho(m)\sum_{|k|\le O(1/\e)} \frac{\e^{1/2}}{1+\e^{-1/2}|\al_k|}\\
&=O(\e^{1/2}\ln(1/\e)),\ \ga:=1/(2(\bt-1)).
\end{split}
\ee
Therefore, in what follows we always assume $|m|\le \e^{-\ga}$.

\section{End of proof of \eqref{extra terms I} and proof of \eqref{extra terms II} in case (A).}
\label{sec:end A}

\subsection{Two auxiliary lemmas}\label{ssec: two aux lems}
For an interval $I$ and a $C^2(I)$ function $\phi$ define
\be\label{mxmn}
\phi_{\text{mx}}^{\prime\prime}(I):=\sup_{\al\in I}|\phi^{\prime\prime}(\al)|,\
\phi_{\text{mn}}^{\prime\prime}(I):=\inf_{\al\in I}|\phi^{\prime\prime}(\al)|.
\ee
Most of the time we omit the dependence of the above quantities on $I$, because the interval in question is clear from the context.

Recall that $\kappa=\Delta\al/\Delta p$. The following lemma is proven in appendix~\ref{ssec:prf sum1 est}.

\begin{lemma}\label{lem:sum1 est}
Let $I$ be an interval. Pick two functions $\phi$ and $g$ such that $\phi\in C^2(I)$,  $\phi_{\text{mx}}^{\prime\prime}<\infty$; and $g\in BV(I)$. Suppose that for some $l\in \mathbb Z$ one has
\be\label{Omega_eps}
|\kappa  \phi’(\al)-l|\le 1/2\text{ for any }\al\in I.
\ee
Denote 
\be\label{h-fn}\begin{split}
&\fks(\al):=\phi(\al)-(l/\kappa)\al,\\
&h_l(\al):=\frac{\pi\kappa \fks’(\al)}{\sin(\pi\kappa \fks’(\al))}\text{ if } \phi_l’(\al)\not=0,
\ \al\in I,
\end{split}
\ee
and $h_l(\al):=1$ if $\phi_l’(\al)=0$. For all $\e>0$ sufficiently small, one has
\be\label{sum est}\begin{split}
&\biggl|\sum_{\al_k\in I}g(\al_k)e\biggl(\frac{\phi(\al_k)}\e\biggr)-\frac1{\Delta\al}\int_{I} g(\al)h_l(\al) e\biggl(\frac{\fks(\al)}\e\biggr)\dd \al\biggr|\\
&\le c\biggl[\bigl(1+\e\phi_{\text{mx}}^{\prime\prime}\bigr)\TV(g,I)+\phi_{\text{mx}}^{\prime\prime}\int_{I}|g(\al)|\dd\al \biggr],
\end{split}
\ee
where the constant $c$ is independent of $\e$, $\phi$,  $g$, and $I$.
\end{lemma}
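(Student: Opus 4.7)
The plan is to exploit the critical-sampling relation $\Delta\al=\kappa\e$, which lets us eliminate the linear part $(l/\kappa)\al$ of the phase at the sample points. Writing $\phi(\al)=(l/\kappa)\al+\fks(\al)$ and using $\al_k=\bar\al+k\Delta\al$, one computes $(l/\kappa)\al_k/\e=l\bar\al/\Delta\al+lk$, so that
\be\label{plan-red}
e(\phi(\al_k)/\e)=e(l\bar\al/\Delta\al)\,e(\fks(\al_k)/\e),
\ee
since $e(lk)=1$ for $l,k\in\BZ$. Thus, after absorbing the unimodular prefactor by a parallel manipulation on the integral side, the problem reduces to comparing the sum $S:=\sum_{\al_k\in I}g(\al_k)\,e(\fks(\al_k)/\e)$ with the integral $J:=\frac{1}{\Delta\al}\int_I g(\al)h_l(\al)e(\fks(\al)/\e)\,\dd\al$, where the new phase $\fks/\e$ has derivative of magnitude at most $1/(2\Delta\al)$---exactly the Nyquist rate of the sampling grid. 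The factor $h_l$ will appear as the $\mathrm{sinc}$-type correction needed for a critically-sampled Riemann-sum identity at this frequency.

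To compare $S$ and $J$, I would partition $I$ into cells $I_k:=[\al_k-\Delta\al/2,\al_k+\Delta\al/2]\cap I$ indexed by the sample points, write $J=\frac{1}{\Delta\al}\sum_k\int_{I_k}g h_l e(\fks/\e)\,\dd\al$, and on each interior cell substitute $\al=\al_k+\tau\Delta\al$, $\tau\in[-1/2,1/2]$. Freezing $g$ and $h_l$ at $\al_k$ and linearizing the phase via $\fks(\al_k+\tau\Delta\al)/\e=\fks(\al_k)/\e+\tau\kappa\fks'(\al_k)+O(\e\phmx)$, the key step will be the identity
\be\label{plan-sinc}
\int_{-1/2}^{1/2}e(\tau\kappa\fks'(\al_k))\,\dd\tau=\frac{\sin(\pi\kappa\fks'(\al_k))}{\pi\kappa\fks'(\al_k)}=\frac{1}{h_l(\al_k)},
\ee
which precisely cancels the $h_l(\al_k)$ factor and gives $\int_{I_k}g h_l e(\fks/\e)\,\dd\al\approx\Delta\al\cdot g(\al_k)\,e(\fks(\al_k)/\e)$. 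Summing over $k$ and dividing by $\Delta\al$ yields $J\approx S$ at leading order, and \eqref{plan-red} then converts this into the assertion of the lemma.

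For the quantitative error, I would expand the integrand on each $I_k$ as a product of three frozen-plus-increment factors, $g(\al)=g(\al_k)+\Delta g$, $h_l(\al)=h_l(\al_k)+\Delta h_l$, and $e(\fks(\al)/\e)=e_{\text{lin}}(\tau)+\Delta e$, where $e_{\text{lin}}(\tau):=e(\fks(\al_k)/\e+\tau\kappa\fks'(\al_k))$. Only the fully-frozen piece gives the main term via \eqref{plan-sinc}; the seven cross terms are error, each bounded using $\int_{I_k}|\Delta g|\,\dd\al\le\Delta\al\,\TV(g,I_k)$; the estimate $|\Delta h_l|\le c\phmx|\al-\al_k|$ (since $h_l$ is a smooth function of $\kappa\fks'=\kappa\phi'-l$ whose derivative is $\kappa\phi''$); and $|\Delta e|\le c\phmx(\Delta\al)^2/\e=O(\e\phmx)$ from the quadratic Taylor remainder of the phase. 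Summing these contributions over $k$ and dividing by $\Delta\al$ produces the dominant terms $\TV(g,I)$, $\e\phmx\,\TV(g,I)$, and $\phmx\int_I|g(\al)|\,\dd\al$, with higher-order remainders absorbed into these. Boundary cells that are not fully contained in $I$ contribute only endpoint values, controllable by $\TV(g,I)$ (together with an anchor value absorbable into the $L^1$ term).

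The hard part will be the cross term $\Delta g\cdot h_l(\al_k)\cdot\Delta e$: bounding $|\Delta e|$ by the trivial constant $2$ loses the $\e\phmx$ smallness, while bounding $|\Delta g|$ by $\|g\|_\infty$ loses the BV structure and produces a term that is too large. Pairing the Taylor remainder of the phase with the BV increment of $g$---i.e.\ using both $|\Delta e|=O(\e\phmx)$ and $\int_{I_k}|\Delta g|\,\dd\al\le\Delta\al\,\TV(g,I_k)$ in the same estimate---yields exactly $\e\phmx\,\TV(g,I)$, and this is the origin of the $(1+\e\phmx)$ factor on $\TV(g,I)$ in the stated bound.
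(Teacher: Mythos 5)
Your proposal is correct and follows essentially the same route as the paper's proof: use $\Delta\al=\kappa\e$ to replace $\phi$ by $\fks$ at the sample points, tile $I$ by cells of width $\Delta\al$ centered at the $\al_k$, freeze $g$ and $h_l$ and linearize the phase on each cell, invoke the sinc identity $\int_{-1/2}^{1/2}e(\tau\kappa\fks'(\al_k))\dd\tau=1/h_l(\al_k)$, and charge the errors to $\TV(g,I)$, $\e\phmx\TV(g,I)$, and $\phmx\int_I|g|$. The only difference is bookkeeping: the paper telescopes the integrand into two terms, $[g(\al)-g(\al_k)]h_l e(\fks/\e)$ plus $g(\al_k)[\cdots]$, and obtains the $\e\phmx\TV(g,I)$ term from converting $\e\sum_k|g(\al_k)|$ into $\int_I|g|+\e\TV(g,I)$, whereas you expand into frozen-plus-increment products and locate that term in the $\Delta g\cdot\Delta e$ cross term — both yield the stated bound.
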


In what follows, we use Lemma~\ref{lem:sum1 est} to replace exponential sums, like those in \eqref{extra terms I}, with integrals. Hence, we consider the following integrals with various limits $a_1,a_2$:
\be\label{int jl*}
\frac1{\Delta\al}\int_{a_1}^{a_2} g_\e(\al)h_l(\al) e\biggl(\frac{\fks(\al)}\e\biggr)\dd \al.
\ee
The following lemma is proven in appendix~\ref{lem:int1 est prf}.

\begin{lemma}\label{lem:int1 est} Fix any integer $m\not=0$ and set $\phi(\al)=-m \vec\al\cdot x_0$, where $x_0\in\s$ is generic. Pick any integer $l\in\kappa\phi’([-\pi/2,\pi/2])$, and let $|\al_l^*|\le \pi/2$ be such that $\kappa\phi’(\al_l^*)=l$. Pick any interval $I_l$, $\al_l^*\in I_l\subset[-\pi/2,\pi/2]$, such that 
\begin{align}\label{Omega_eps v2}
&|\kappa  \phi’(\al)-l|\le 1/2,\ \forall\al\in I_l \text{ and }\phi_{\text{mn}}^{\prime\prime}>0.
\end{align}
Suppose $g_\e(\al)\in C^1(I_l)$, $\e>0$, is a family of functions which satisfy
\begin{align}\label{g_e ass 1}
&|g_\e(\al)|\le c\rho(m)\e^{1/2}(1+\e^{-1/2}|\al|)^{-1},\ \forall\al\in I_l,\\ 
&\label{g_e ass 2}
\TV(g_\e,I_\e’)\le c\rho(m)\e^{1/2}(1+\e^{-1/2}|\al|)^{-1},\ \forall \al\in I_\e’,
\end{align}
for any interval $I_\e’\subset I_l$, $|I_\e’|=O(\e^{1/2})$. One has
\be\label{int1 est}\begin{split}
&\biggl|\frac1{\Delta\al}\int_{I_l} (g_\e h_l)(\al) e\biggl(\frac{\phi_l(\al)}\e\biggr)\dd \al\biggr|
\le c \frac{\rho(m)\e^{1/2}}{|\al_l^*|}\biggl[1+\frac{\ln(1/\e)}{\phi_{\text{mn}}^{\prime\prime}}+\frac{|m|}{(\phi_{\text{mn}}^{\prime\prime})^2}\biggr], m\not=0,
\end{split}
\ee
where $c>0$ is an absolute constant.
\end{lemma}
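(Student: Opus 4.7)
The integral in \eqref{int1 est} is an oscillatory integral with a unique non-degenerate stationary point of the phase $\phi_l$ at $\al_l^*$: one has $\phi_l'(\al_l^*)=0$ and $|\phi_l''|\ge M:=\phi''_{\mathrm{mn}}$ throughout $I_l$. My plan is the standard split into a stationary window around $\al_l^*$, handled by a trivial sup bound, and its complement, handled by a single integration by parts. The non-standard feature is that $g_\e$ is merely of bounded variation, with only the local TV estimate \eqref{g_e ass 2} available. The factor $h_l$ is innocuous on $I_l$: the constraint $|\kappa\phi_l'|\le 1/2$ keeps the argument of $\sin$ in \eqref{h-fn} inside $[-\pi/2,\pi/2]$, so $h_l\in C^\infty$ is bounded above and below away from zero, and can be absorbed into multiplicative constants.

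Set $\delta:=\sqrt{\e/M}$ (the stationary-phase scale) and split $I_l=I^{\mathrm{st}}\cup I^{\mathrm{ns}}$ with $I^{\mathrm{st}}:=\{\al\in I_l:|\al-\al_l^*|\le\delta\}$. On $I^{\mathrm{st}}$ I use the trivial bound of length times sup; in the informative regime $|\al_l^*|\gtrsim\delta$ the amplitude factor $(1+\e^{-1/2}|\al|)^{-1}$ gives $\sup_{I^{\mathrm{st}}}|g_\e|\lesssim\rho(m)\e/|\al_l^*|$, and the overall contribution is of order $\rho(m)\e^{1/2}/(|\al_l^*|\sqrt{M})$, which is dominated by the ``$1$'' term of the bracket. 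On each connected component of $I^{\mathrm{ns}}$ I integrate by parts:
\begin{equation*}
\int g_\e h_l e^{2\pi i\phi_l/\e}\,\dd\al=\frac{\e}{2\pi i}\left[\frac{g_\e h_l}{\phi_l'}e^{2\pi i\phi_l/\e}\right]_{\partial}-\frac{\e}{2\pi i}\int_{I^{\mathrm{ns}}}\left(\frac{g_\e h_l}{\phi_l'}\right)' e^{2\pi i\phi_l/\e}\,\dd\al.
\end{equation*}
The inner boundary terms at $\al_l^*\pm\delta$ are of the same size as the stationary-region contribution ($|\phi_l'|\asymp\sqrt{\e M}$ there); the outer boundary terms are harmless because $|\phi_l'|\asymp 1/\kappa$ by the defining constraint on $I_l$.

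For the bulk integral over $I^{\mathrm{ns}}$, expand $(g_\e h_l/\phi_l')'=(g_\e h_l)'/\phi_l'-g_\e h_l\phi_l''/(\phi_l')^2$. The second summand is handled by combining $|\phi_l''|\le c|m|$ (from $\phi=-m\vec\al\cdot x_0$), the sup bound on $g_\e$, and the elementary $\int_{|\al-\al_l^*|\ge\delta}\dd\al/(\al-\al_l^*)^2\lesssim 1/\delta$; this delivers the $|m|/M^2$ term. The first summand is the main technical obstacle because $g_\e$ is not differentiable in the classical sense. I plan to reinterpret $\int|(g_\e h_l)'/\phi_l'|\,\dd\al$ as the Stieltjes integral $\int|\phi_l'|^{-1}\,\dd|g_\e h_l|$ and partition $I^{\mathrm{ns}}$ into sub-intervals $J_n$ of length $\e^{1/2}$. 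On each $J_n$ the local bound \eqref{g_e ass 2} applies, and $|\phi_l'|\ge M\,\mathrm{dist}(J_n,\al_l^*)$. Summing over the $\lesssim(\kappa M\e^{1/2})^{-1}$ sub-intervals produces the harmonic sum $\sum_n 1/(\delta+n\e^{1/2})\asymp\e^{-1/2}\ln(1/\e)$, which yields the $\ln(1/\e)/M$ term of the bracket.

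The hard part will be this first-summand Stieltjes estimate: the $\e^{1/2}$ partition must be chosen and its error accounted for carefully, and the harmonic sum has to be kept tight across both regimes $\delta\ll\e^{1/2}$ (i.e.\ $M\gg 1$) and $\delta\gg\e^{1/2}$ (i.e.\ $M\ll 1$). A secondary bookkeeping issue is the implicit separation $|\al_l^*|\gtrsim\e^{1/2}$ assumed by the target bound: when $|\al_l^*|\lesssim\e^{1/2}$ the right-hand side of \eqref{int1 est} already majorizes the trivial $L^1$ estimate $(1/\Delta\al)\int|g_\e h_l|\,\dd\al$, so that regime needs no separate argument.
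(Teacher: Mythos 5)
Your plan is essentially the paper's own proof: the paper splits off the window $[\al_l^*,\al_l^*+\e^{1/2}]$ (trivial sup bound via \eqref{g_e ass 1}, giving the ``$1$'' term), integrates by parts on the complement, bounds the boundary terms using $|\phi_l'(\al)|\ge c\phmn|\al-\al_l^*|$, and estimates the bulk exactly as you propose --- partitioning into $\e^{1/2}$-subintervals so that the local TV hypothesis \eqref{g_e ass 2} produces the harmonic sum and the $\ln(1/\e)/\phmn$ term, while the $\phi_l''/(\phi_l')^2$ piece yields $|m|/(\phmn)^2$. The only substantive difference is that the paper exploits the exact form of $h_l$, writing $h_l\,e(\phi_l/\e)=\tfrac{\kappa\e}{2i}\,\pa_\al e(\phi_l/\e)/\sin(\pi\kappa\phi_l')$ so that no derivative of $h_l$ ever appears, whereas your version generates an extra term $g_\e h_l'/\phi_l'$ that you must (and can, since $|h_l'|\le c\kappa\phmx=O(|m|)$ and $|m|\e^{1/2}=O(1)$ in every application of the lemma) absorb into the same bracket.
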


Since $|\kappa\phi’(0)|=|m \kappa \al_0^\perp \cdot x_0|$ is irrational (see assumption (4) in Definition~\ref{def:gp}), $\al_l^*\not=0$ for any $l$ and $m$.

Straightforward application of conventional methods for estimating integrals, e.g. the stationary phase method or integration by parts, is not sufficient. A more delicate use of these techniques is required for our purposes.

\subsection{End of proof of \eqref{extra terms I}.}\label{ssec:end of prf}
Set
\be\label{Am small g}
g_{m,\e}(\al):=e\left(-m \vec\al\cdot \check x\right)\e A_m(\al,\e),\
\phi(\al):=-m \vec\al\cdot x_0.
\ee
Pick any interval $I\subset[-\pi/2,\pi/2]$ and consider sums of the kind
\be\label{generic sum}
J_\e(m,I):=\sum_{\al_k\in I} g_{m,\e}(\al_k) e(\phi(\al_k)/\e).
\ee
From \eqref{extra terms I}, \eqref{extra extra} and \eqref{Am small g} it follows that we need to prove that 
\be\label{Jme sum A}
\sum_{1\le|m|\le\e^{-\ga}}|J_\e(m,[-\pi/2,\pi/2])|=O(\e^{1/2}\ln(1/\e)).
\ee
This is proven using the following steps.
\begin{enumerate}
\item Pick a collection of finitely many nonoverlapping intervals $I_l$ so that $\cup_l I_l=[-\pi/2,\pi/2]$ and, therefore, $\sum_l J_\e(m,I_l)=J_\e(m,[-\pi/2,\pi/2])$;
\item Use Lemma~\ref{lem:sum1 est} with $g=g_{m,\e}$, where $g_{m,\e}$ is given by \eqref{Am small g}, to show that for each $I_l$ the sum \eqref{generic sum} can be replaced by the corresponding integral \eqref{int jl*} over $I_l$. More precisely, we show that the total error of replacing all the sums with the integrals is of order $O(\e^{1/2}\ln(1/\e))$;
\item Use Lemma~\ref{lem:int1 est} with $g_\e=g_{m,\e}$ to estimate the integrals \eqref{int jl*} over $I_l$ and show that the sum of the estimates over all $I_l$ and all $1\le|m|\le\e^{-\ga}$ is $O(\e^{1/2}\ln(1/\e))$.
\end{enumerate}

Thus, our first goal is to apply Lemma~\ref{lem:sum1 est} to \eqref{generic sum}, where $g_{m,\e}$ are given by \eqref{Am small g}. This is achieved by the following lemma, which is proven in appendix~\ref{sec:prf appl lem1}.

\begin{lemma}\label{lem:apply lemma 1} Pick any interval $I\subset[-\pi/2,\pi/2]$. Let $g_{m,\e}$ and $\phi$, where $m\in\mathbb Z$, be defined by \eqref{Am small g}. Under the assumptions of Theorem~\ref{main-res} and assumptions~\ref{ass:S_A} one has $g_{m,\e}\in C^1([-\pi/2,\pi/2])$ and
\begin{align}\label{g_e ass 1 v2}
&|g_{m,\e}(\al)|\le c\rho(m)\e^{1/2}(1+\e^{-1/2}|\al|)^{-1},\ \forall\al\in I,\\ 
&\label{g_e ass 2 v2}
\TV(g_{m,\e},U)\le c\rho(m)\e^{1/2}(1+\e^{-1/2}|\al|)^{-1},\ \forall\al\in U,
\end{align}
for any $m\in\mathbb Z$, $|m|\le \e^{-\ga}$, and any interval $U\subset I$, $|U|=\e^{1/2}$. Therefore, if $|m|\le \e^{-\ga}$, one has
\be\label{main coef orig}\begin{split}
&\bigl(1+\e\phi_{\text{mx}}^{\prime\prime}\bigr)\TV(g_{m,\e},I)+\phi_{\text{mx}}^{\prime\prime}\int_{I}|g_{m,\e}(\al)|\dd\al\le c \rho(m)\e^{1/2}\ln(1/\e).
\end{split}
\ee
\end{lemma}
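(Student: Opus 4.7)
The lemma has four assertions: (i) $g_{m,\e}\in C^1([-\pi/2,\pi/2])$; (ii) the pointwise bound \eqref{g_e ass 1 v2}; (iii) the local total-variation bound \eqref{g_e ass 2 v2} on intervals of length $\e^{1/2}$; and (iv) the consolidated estimate \eqref{main coef orig}. The plan is to establish (i) and (ii) directly, derive (iii) by exploiting a cancellation in the integral representation of $(\e A_m)'$, and deduce (iv) from (ii) and (iii) by integration and summation.

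For (i), I differentiate \eqref{A simpl orig} under the integral sign. The upper limit $H_0(\e^{-1/2}\theta)$ in the $\hat t$-integral is only of bounded variation, but it is independent of $\al$, so differentiation affects only the smooth integrand; dominated convergence gives $\e A_m\in C^1$ in $\al$, and multiplication by $e(-m\vec\al\cdot\check x)$ preserves this. For (ii), I combine $|\tilde\psi_m(t)|\le c\rho(m)(1+t^2)^{-1}$ (Lemma~\ref{lem:psi psider}), the boundedness of $F$ and $H_0$, and the equivalence $R(\theta,\al)\asymp\theta(\theta-\al)$ from Lemma~\ref{lem:aux props}, which reduces $|\e A_m(\al,\e)|$ to the model integral of Lemma~\ref{lem:first int} and yields $c\rho(m)\e^{1/2}(1+\e^{-1/2}|\al|)^{-1}$; since $|e(-m\vec\al\cdot\check x)|=1$, the same bound holds for $g_{m,\e}$.

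The main task is (iii). By the product rule, $g_{m,\e}'(\al)=e(-m\vec\al\cdot\check x)\bigl[-2\pi im(\vec\al^\perp\cdot\check x)\,\e A_m(\al,\e)+(\e A_m)'(\al,\e)\bigr]$, so $\TV(g_{m,\e},U)=\int_U|g_{m,\e}'(\al')|\dd\al'$. The first summand is bounded by $c|m|$ times the pointwise estimate of (ii); since $\bt>2$ forces $\ga<1/2$ in the constraint $|m|\le\e^{-\ga}$, one has $|m|\e^{1/2}=O(1)$, and integrating over $U$ of length $\e^{1/2}$ absorbs the extra factor $|m|$ to yield the claimed bound. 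For $(\e A_m)'$, differentiation of \eqref{A simpl orig} produces a factor $\e^{-1}\pa_\al R+\pa_\al h$ with $|\pa_\al R|\le c|\theta|$ and $|\pa_\al h|=O(1)$. When $|\al|\le c\e^{1/2}$ the two zeros of $R$ merge, and a direct estimate using Lemma~\ref{lem:first int} and the analogous model bound $\int|\theta|(1+(\theta(\theta-\al)/\e)^2)^{-1}\dd\theta=O(\e)$ gives $|(\e A_m)'|=O(\rho(m))$, whence $\TV(\e A_m,U)=O(\rho(m)\e^{1/2})$. When $|\al|\ge c\e^{1/2}$ a naive bound yields only $|(\e A_m)'|=O(\rho(m))$, losing the required factor $(1+\e^{-1/2}|\al|)^{-1}$. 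The key trick is the cancellation $\int_\br\tilde\psi_m'(u)\dd u=0$: on each of the two peaks (near $\theta=0$ and near $\theta=\CA^{-1}(\al)$) I change variable to $u=\e^{-1}R$ and Taylor-expand the slowly varying prefactor $(\pa_\al R/\pa_\theta R)F$ about the peak center. The constant Taylor term contributes $0\cdot\int\tilde\psi_m'\dd u=0$; the linear term produces $\int u\tilde\psi_m'(u+h)\dd u$, which integration by parts together with the estimate $\bigl|\int_\br\tilde\psi_m(u)\dd u\bigr|=2\pi|m||\tilde\ik(m)|=O(\rho(m))$ (from assumption~\ref{ass:ik}(1)) reduces to $O(\rho(m))$; the prefactor $\e/\pa_\theta R\asymp\e/|\al|$ then yields $|(\e A_m)'|=O(\rho(m)\e/|\al|)$, and integrating over $U$ gives \eqref{g_e ass 2 v2}.

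For (iv), integrating the pointwise bound (ii) over $I=[-\pi/2,\pi/2]$ gives $\int_I|g_{m,\e}|\dd\al\le c\rho(m)\e\ln(1/\e)$, the logarithm coming from $\int_I(1+\e^{-1/2}|\al|)^{-1}\dd\al$. Partitioning $I$ into $O(\e^{-1/2})$ subintervals $U_k$ of length $\e^{1/2}$ centered at $\al_k\asymp k\e^{1/2}$, subadditivity of total variation and (iii) give $\TV(g_{m,\e},I)\le c\rho(m)\e^{1/2}\sum_k(1+|k|)^{-1}=c\rho(m)\e^{1/2}\ln(1/\e)$. Since $\phi''(\al)=m\vec\al\cdot x_0$ gives $\phmx\le c|m|$, one has $\e\phmx\le\e^{1-\ga}=O(1)$ and $|m|\e\le\e^{1/2}$, so both terms of \eqref{main coef orig} are bounded by $c\rho(m)\e^{1/2}\ln(1/\e)$. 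The hard part is establishing (iii) for $|\al|\ge c\e^{1/2}$: without the cancellation $\int_\br\tilde\psi_m'=0$ the naive TV bound is off by a factor of $\e^{1/2}/|\al|$, and summing over the $\e^{-1/2}$ subintervals in (iv) would destroy the $\ln(1/\e)$ scaling needed to close the argument.
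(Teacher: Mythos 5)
Your parts (i), (ii) and (iv) are essentially the paper's argument: $C^1$ regularity from differentiating under the integral sign, the pointwise bound from Lemmas~\ref{lem:psi psider}, \ref{lem:aux props} and \ref{lem:first int}, and the final assembly by integrating the pointwise bound and summing the local TV bounds over $O(\e^{-1/2})$ subintervals, using $|m|\e^{1/2}=O(1)$. The problem is part (iii), which is the heart of the lemma, and there your argument has a genuine gap. After the change of variable $u=\e^{-1}R$ on the peak near $\theta=\CA^{-1}(\al)$, the quantity multiplying $\tilde\psi_m'(u+h)$ is not just $(\pa_\al R/\pa_\theta R)F$: the inner $\hat t$-integral has upper limit $H_0(\e^{-1/2}\theta(u))$, which is therefore part of the ``prefactor'' as a function of $u$. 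That function is only of bounded variation (assumption~\ref{ass:H0}), not differentiable, so it cannot be Taylor-expanded about the peak center, and the step ``the constant Taylor term contributes $0\cdot\int\tilde\psi_m'=0$'' does not go through. Indeed, writing the peak contribution as $\int\tilde\psi_m'(u+h)P(u)\,\dd u$ with $P$ jumping between two values on an arbitrary measurable set, the integral can be as large as $\int|\tilde\psi_m'|\asymp\rho(m)$ — exactly the naive bound you were trying to beat. A telling symptom is that your proof of (iii) never invokes assumption~\ref{ass:H0}(2); the paper explicitly remarks that this assumption is the key ingredient at precisely this step (see the remark following \eqref{var est (1)}), and the earlier works required extra hypotheses on $H_0$ for the same reason.

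The paper's route around this is different and worth noting. It splits $g_{m,\e}=g_{m,\e}^{(1)}+g_{m,\e}^{(2)}$ according to whether $|s(\theta,\al)|\le\e^{1/2}$, where $s=R/\theta\asymp\CA(\theta)-\al$. On the far region $\Xi_2$ it differentiates naively and uses the model integral \eqref{model der int} (which excludes a neighborhood of $\theta=\al$), needing only boundedness of $H_0$. On the near region $\Xi_1$ it does \emph{not} differentiate: it changes variables $\theta\to s$, so that the upper limit becomes $\mu(\al,s)=H_0(\e^{-1/2}\theta(s,\al))$, and estimates the total variation directly by finite differences, splitting $g^{(1)}(\al_2')-g^{(1)}(\al_1')=J_1+J_2$ where $J_1$ isolates the change of the upper limit. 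Summing $|J_1|$ over a partition produces $\sum_k|H_0(\tilde\theta_k')-H_0(\tilde\theta_{k-1}')|\le\TV(H_0,\tilde U)\le V_0|\tilde U|\le c$, which is where assumption~\ref{ass:H0}(2) enters. To repair your argument you would have to replace the Taylor expansion by a bound of the form $|P(u)-P(u_*)|\le\TV(P,[u_*,u])$ and control that variation through $\TV(H_0,\cdot)$ — at which point you are reconstructing the paper's finite-difference argument.
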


Comparing \eqref{g_e ass 1 v2} and \eqref{g_e ass 2 v2} with \eqref{g_e ass 1} and \eqref{g_e ass 2}, respectively, we see that $g_{m,\e}$ satisfy the assumptions of Lemma~\ref{lem:int1 est}.

From the statement of Lemma~\ref{lem:int1 est} it is clear that the intervals $I_l$ mentioned in the three-step procedure following \eqref{Jme sum A} should be carefully selected. Let us now construct these intervals.

Let $\alst\in(-\pi/2,\pi/2]$ satisfy $\phi^{\prime\prime}(\alst)=0$, i.e. $\vec\al_\star\cdot x_0=0$. Denote
\be\label{special l}
l_\star:=\lfloor\kappa \phi’(\alst)\rfloor,
\ee
see Figure~\ref{fig:sine}. For brevity, the dependence of $l_\star$ on $m$ is omitted from notation. Note that $\al_{l_\star}^*\not=\al_\star$ (i.e., $\{\kappa\phi’(\alst)\}>0$) for any $m\in\mathbb Z$, $m\not=0$, because $|\kappa\phi’(\al_\star)|=|m \kappa x_0|$ is irrational (see assumption (3) in Definition~\ref{def:gp}). Moreover, $\phi^{\prime\prime}(\alst)=0$ implies $\vec\al_\star\cdot x_0=0$, so $\alst\not=0$ by assumption (2) in Definition~\ref{def:gp}.

It is clear that $\al=\alst$ is either a local maximum or a local minimum of $\phi’$. 
Both cases are analogous, so we consider only one of them: $\al=\alst$ is a local maximum (as shown in  Figure~\ref{fig:sine}).

Define the set (see the red interval in Figure~\ref{fig:sine})
\be\label{exc intervals}\begin{split}
&I_{\star}=\{|\al|\le\pi/2: \kappa\phi’(\al)\ge l_\star\}.
\end{split}
\ee
Additionally, with each integer $l\in\mathcal L(m):=\kappa\phi’([-\pi/2,\pi/2])$ we associate the set:
\be\label{main intervals}\begin{split}
&I_l:=\{\al\in[-\pi/2,\pi/2]\setminus I_{\star}: |\kappa\phi’(\al)-l|\le 1/2\},
\end{split}
\ee
see the blue interval in Figure~\ref{fig:sine}. Each $I_l$ is the union of at most two intervals (on either side of $\alst$), and there are $O(|m|)$ of them. As is easily seen, each $I_l$ (or, each subinterval that makes up $I_l$) satisfies the assumptions of Lemma~\ref{lem:int1 est} (see \eqref{Omega_eps v2}). By construction,
\be\label{union}
\cup_{l\in\mathcal L(m)}I_l \cup I_{\star}=[-\pi/2,\pi/2].
\ee

The intervals $I_l$ are ‘regular’, in the sense that each of them satisfies the assumptions of Lemma~\ref{lem:int1 est}, so the corresponding integrals can be estimated using \eqref{int1 est}. The interval $I_{\star}$ is exceptional: \eqref{int1 est} does not apply, because $\phi_{\text{mn}}^{\prime\prime}(I_{\star})=0$ ($\phi^{\prime\prime}(\alst)=0$ and $\alst\in I_{\star}$). Thus, in addition to the regular intervals $I_l$, we have to consider the exceptional interval $I_{\star}$. Since Lemma~\ref{lem:int1 est} does not apply to it, estimation of its contribution requires special handling.

\begin{figure}[h]
{\centerline
{\epsfig{file=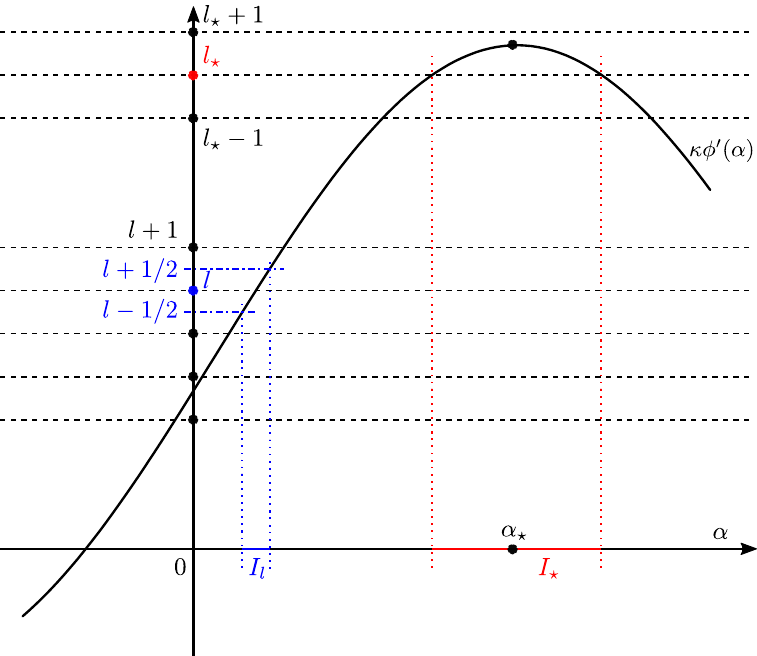, width=11.0cm}}}
\caption{The graph of $\kappa\phi’(\al)$ and construction of the intervals $I_l$ (blue) and $I_{\star}$ (red).}
\label{fig:sine}
\end{figure}

\begin{lemma}\label{lem:exc cases} Under the assumptions of Theorem~\ref{main-res} and assumptions~\ref{ass:S_A} one has
\be\begin{split}
\label{exc II}
\sum_{1\le |m|\le\e^{-\ga}}\biggl|\sum_{\al_k\in I_\star}g_{m,\e}(\al_k)e\biggl(\frac{\phi(\al_k)}\e\biggr)\biggr|
&=O(\e^{1/2}\ln(1/\e))\text{ if } \textcolor{black}{\bt>(\eta_\star/2)+1.}
\end{split}
\ee
\end{lemma}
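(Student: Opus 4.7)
The plan is to exploit two facts: first, $|I_\star|=O(|m|^{-1/2})$, so that $I_\star$ is very narrow; second, the type assumption on $\kappa|x_0|$ provides a quantitative lower bound on the distance from $\kappa\phi'(\alst)$ to the nearest integer. Direct computation gives $\phi''(\al)=m\vec\al\cdot x_0$ and $\phi'''(\al)=m\vec\al^\perp\cdot x_0$, and the condition $\phi''(\alst)=0$ forces $x_0=\pm|x_0|\vec\al_\star^\perp$, so that $|\phi'''(\alst)|=|m||x_0|\asymp|m|$ and $\kappa\phi'(\alst)=\mp m\kappa|x_0|$. Definition~\ref{def:gp}(2) rules out $x_{0,1}=0$ and hence gives $|\alst|\ge c>0$. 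A Taylor expansion of $\phi'$ around the local extremum $\alst$ yields $|\kappa(\phi'(\al)-\phi'(\alst))|\asymp\kappa|m|(\al-\alst)^2$, and combined with $l_\star=\lfloor\kappa\phi'(\alst)\rfloor$ this gives
\[
|I_\star|\le c\bigl(\{\kappa\phi'(\alst)\}/|m|\bigr)^{1/2}\le c|m|^{-1/2}.
\]
Moreover, Definition~\ref{def:gp}(3) together with the type-$\eta_\star$ inequality yields $\langle\kappa\phi'(\alst)\rangle\ge c|m|^{-\eta_1}$ for any $\eta_1>\eta_\star$.

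Next, I would decompose $I_\star=I_\star^{\mathrm{near}}\cup I_\star^{\mathrm{far}}$, where $I_\star^{\mathrm{near}}$ is the $c\e^{1/2}$-neighborhood of $\alst$ inside $I_\star$. On $I_\star^{\mathrm{near}}$, using $|g_{m,\e}(\al)|\le c\rho(m)\e^{1/2}(1+\e^{-1/2}|\alst|)^{-1}\le c\rho(m)\e$ (since $|\alst|\ge c>0$) together with the trivial count of $O(\e^{-1/2})$ sampling points, the contribution is $O(\rho(m)\e^{1/2})$ per $m$, which sums to $O(\e^{1/2})$ because $\bt>1$. On $I_\star^{\mathrm{far}}$, after splitting (if necessary) at the point where $\kappa\phi'=l_\star+1/2$ to ensure the hypothesis $|\kappa\phi'-l|\le 1/2$ of Lemma~\ref{lem:sum1 est} holds with $l\in\{l_\star,l_\star+1\}$, I would apply Lemma~\ref{lem:sum1 est}. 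Using $\phi''_{\mathrm{mx}}=O(|m|)$ together with the TV and amplitude bounds of Lemma~\ref{lem:apply lemma 1}, the dominant error is $\phi''_{\mathrm{mx}}\int_{I_\star^{\mathrm{far}}}|g_{m,\e}|\lesssim|m|\cdot\rho(m)\e\cdot|m|^{-1/2}=\rho(m)|m|^{1/2}\e$, which sums to $O(\e)$ for $\bt>3/2$.

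The final and most delicate step is to bound the resulting integrals $\tfrac{1}{\Delta\al}\int(g_{m,\e}h_l)(\al)\,e(\fks(\al)/\e)\,d\al$ on the subpieces of $I_\star^{\mathrm{far}}$. On these subpieces, the hypothesis of Lemma~\ref{lem:int1 est} fails (since $\phi''_{\mathrm{mn}}=0$ in the closure of $I_\star$), so I would estimate the integrals directly via integration by parts, whose boundary and interior terms are controlled by $|\fks'|^{-1}$. Away from $\alst$, the quadratic expansion $|\kappa\fks'(\al)|\asymp|m|(\al-\alst)^2$ is the decisive control; near $\alst$, the type lower bound $|\kappa\fks'(\alst)|\ge c|m|^{-\eta_1}$ controls how close the zero $\al_l^*$ of $\fks'$ can be to $\alst$. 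Combining these two regimes produces a per-$m$ bound of the form $O(\rho(m)|m|^{\eta_\star/2+\delta}\e^{1/2})$ for any fixed $\delta>0$. Summing over $1\le|m|\le\e^{-\ga}$ requires $\bt>\eta_\star/2+1$ for absolute convergence, which is exactly the stated hypothesis, and yields total $O(\e^{1/2})$; the $\ln(1/\e)$ factor is then absorbed from the boundary terms of integration by parts.

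The main obstacle is precisely this last step: the boundary contributions from integration by parts are of the form $\e|g_{m,\e}|/|\fks'|$, which is singular at $\al_l^*$, and one must balance the quadratic growth of $\fks'$ away from $\alst$ against the merely polynomially small value $|\fks'(\alst)|\ge c|m|^{-\eta_1}/\kappa$ guaranteed by the type condition. Getting this balance right without losing more than $|m|^{\eta_\star/2}$ is delicate and is exactly what forces the regularity threshold $\bt>\eta_\star/2+1$ in the statement.
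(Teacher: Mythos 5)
Your overall plan — exploit $|I_\star|=O(|m|^{-1/2})$, split off a small neighborhood where direct counting suffices, convert the remaining sum to an integral via Lemma~\ref{lem:sum1 est} (splitting at $\al_{l_\star+1/2}^*$ so that the hypothesis $|\kappa\phi'-l|\le 1/2$ holds with $l\in\{l_\star,l_\star+1\}$), and then integrate by parts using the type condition on $\kappa|x_0|$ — is the same skeleton as the paper's proof, and your preliminary computations ($\alst\ne 0$, $\kappa\phi'(\alst)=\mp m\kappa|x_0|$, the width bound, the replacement-error bound) are correct. However, the decisive step contains a genuine error and the hard estimate is left undone. Your claim that away from $\alst$ one has $|\kappa\fks'(\al)|\asymp|m|(\al-\alst)^2$ is false on the piece where $l=l_\star$: there $\kappa\phi_{l_\star}'(\al)=\kappa\phi'(\al)-l_\star$ vanishes at $\al=\al_{l_\star}^*$ (the \emph{endpoint} of $I_\star$), while $|m|(\al_{l_\star}^*-\alst)^2\asymp\{\kappa\phi'(\alst)\}\ne 0$. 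The correct relation is $|\kappa\phi_{l_\star}'(\al)|\asymp|m|\,|(\al-\alst)^2-\de^2|\asymp|m|\de\,|\al-\al_{l_\star}^*|$ with $\de:=\alst-\al_{l_\star}^*\gtrsim(\langle m\kappa|x_0|\rangle/|m|)^{1/2}$; it is this possibly small factor $\de$ — not the value of $\fks'$ at $\alst$ — through which the type condition enters on that piece. Consequently your near/far decomposition is centered at the wrong point for $l=l_\star$: excising an $\e^{1/2}$-neighborhood of $\alst$ does nothing to tame the singularity of $1/\sin(\pi\kappa\phi_{l_\star}')$ at $\al_{l_\star}^*$, which sits inside your ``far'' region whenever $\de>\e^{1/2}$, so the integration by parts there diverges as written. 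The fix (which is what the paper does) is to excise an $\e^{1/2}$-neighborhood of $\al_{l_\star}^*$, handle it by direct counting, and run the integration by parts on the complement using $|\kappa\phi_{l_\star}'(\al)|\gtrsim|m|\de|\al-\al_{l_\star}^*|$ together with the $\TV$ bound on $\e^{1/2}$-subintervals.

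Beyond this, the per-$m$ bound $O(\rho(m)|m|^{\eta_\star/2+\de}\e^{1/2})$ is asserted rather than derived, and it does not match what the argument actually produces. The $l=l_\star$ piece yields $c\rho(m)\e^{1/2}\ln(1/\e)\,(|m|\langle m\kappa|x_0|\rangle)^{-1/2}$, which only needs $\bt>(\eta_\star+1)/2$; the binding term comes from the $l=l_\star+1$ piece, where the boundary term at $\alst$ is $c\rho(m)\e\,\langle m\kappa|x_0|\rangle^{-1}\le c\rho(m)\e|m|^{\eta_1}$. This is \emph{not} of the form $\rho(m)|m|^{\eta_\star/2+\de}\e^{1/2}$, and summing it naively over all $m$ would require $\bt>\eta_\star+1$. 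The threshold $\bt>(\eta_\star/2)+1$ is obtained only by writing it as $\e^{1/2}\cdot\e^{1/2}|m|^{\eta_1}$ and trading the extra $\e^{1/2}$ against the cutoff $|m|\le\e^{-\ga}$ with $\ga=1/(2(\bt-1))$. Since deriving exactly this exponent is the whole point of the lemma, and your proposal neither isolates the correct singular point nor carries out the balance it correctly identifies as ``the main obstacle,'' the proof as proposed is incomplete.
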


Lemma~\ref{lem:sum1 est} and inequality \eqref{main coef orig} imply that the replacement of the sum \eqref{generic sum}, where $I=I_l$, with the corresponding integral \eqref{int jl*} leads to an error of magnitude $\sim \rho(m)\e^{1/2}\ln(1/\e)$. There are $O(|m|)$ values of $l$ for each $m$. Adding these error estimates for all $l$ and $m\not=0$ implies that the magnitude of the sum of the errors is $O(\e^{1/2}\ln(1/\e))$
as long as \textcolor{black}{$\bt>2$.} Hence, replacing the sums with the integrals is justified. Let the right side of \eqref{int1 est} be denoted $W_{l,m}(\e)$.

\begin{lemma}\label{lem:generic} Under the assumptions of Theorem~\ref{main-res} and assumptions~\ref{ass:S_A} one has
\be\label{sum-all}
\sum_{1\le |m|\le\e^{-\ga}}\sum_{l\in\mathcal L(m)} W_{l,m}(\e)=O(\e^{1/2}\ln(1/\e))\text{ if }  \textcolor{black}{\bt>\max(\eta_0+2,\eta_\star+1).}
\ee
\end{lemma}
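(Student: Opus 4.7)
The plan is to estimate $\sum_{m,l}W_{l,m}(\e)$ by splitting, for each fixed $m$, the inner sum over $l\in\mathcal L(m)$ into a ``critical'' part (where $\al_l^*$ lies near $\alst$ and $\phmn(I_l)$ degenerates) and a ``regular'' part (where $\phmn(I_l)\asymp|m|$), and to control the worst $l$-term in each part using one of the two Roth conditions. The common ingredient is the explicit phase $\phi(\al)=-m\vec\al\cdot x_0$: direct computation gives $\phi''(\al)=m\vec\al\cdot x_0$ and $|\phi'(\alst)|=|\phi'''(\alst)|=|m||x_0|$, so $\phi''$ has a simple zero at $\alst$, which is nonzero by Definition~\ref{def:gp}(2). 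Also $|\mathcal L(m)|=O(\kappa|m|)$, and adjacent $\al_l^*$'s are separated by $\asymp 1/(\kappa|\phi''(\al_l^*)|)$.

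First I would handle the critical part by indexing $l=l_\star-n$ for $n\geq 0$ and writing $\delta:=\{\kappa\phi'(\alst)\}$. A quadratic Taylor expansion of $\phi'$ at $\alst$ yields
\begin{equation*}
|\al_l^*-\alst|\asymp\sqrt{\tfrac{\delta+n}{\kappa|m|}},\qquad
\phmn(I_l)\asymp\sqrt{\tfrac{|m|(\delta+n)}{\kappa}},
\end{equation*}
together with $|\al_l^*|\asymp|\alst|\asymp 1$. The dominant summand of $W_{l,m}(\e)$ here is $\rho(m)\e^{1/2}|m|/(\phmn)^2$, whose worst case $n=0$ evaluates to $\rho(m)\e^{1/2}\kappa/(|x_0|\delta)$. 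The Roth condition of type $\eta_\star$ on $\kappa|x_0|$ gives $\delta=\langle\kappa m|x_0|\rangle\gtrsim|m|^{-\eta_1}$ for any $\eta_1>\eta_\star$, so the sum over $|m|\le\e^{-\ga}$ is dominated by $\e^{1/2}\sum_m m^{-\bt+\eta_1}$, which is $O(\e^{1/2})$ provided $\bt>\eta_\star+1$. The $n\geq 1$ tail produces only a harmonic sum $\sum_n 1/n$, and the first two bracketed terms of $W_{l,m}(\e)$ give strictly weaker demands on $\bt$.

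Next I would handle the regular part. There each bracketed term of $W_{l,m}(\e)$ reduces, modulo logs, to $\rho(m)\e^{1/2}/|\al_l^*|$. Writing $l_0$ for the integer nearest $\kappa\phi'(0)=-m\kappa x_{0,2}$, the Roth condition of type $\eta_0$ on $\kappa\vec\al_0^\perp\cdot x_0=\kappa x_{0,2}$ gives $|\al_{l_0}^*|\gtrsim|m|^{-\eta_1-1}$ for any $\eta_1>\eta_0$, while the uniform spacing $\asymp 1/(\kappa|m|)$ telescopes to
\begin{equation*}
\sum_{l\ \text{regular}}\tfrac{1}{|\al_l^*|}\lesssim|m|^{\eta_1+1}+\kappa|m|\ln|m|.
\end{equation*}
The most demanding of the three bracketed terms is the constant one (no extra $1/|m|$ decay): it contributes $\sum_m\rho(m)\e^{1/2}|m|^{\eta_1+1}$, which is $O(\e^{1/2})$ provided $\bt>\eta_0+2$. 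The other two pick up an extra $1/|m|$ from $\phmn\asymp|m|$ and need only $\bt>\eta_0+1$. Combining the critical and regular estimates yields the stated threshold $\bt>\max(\eta_0+2,\eta_\star+1)$.

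The main obstacle is the immediate neighborhood of $\alst$, where $\phi''$ vanishes and ordinary stationary-phase bookkeeping fails; the square-root Taylor estimates above, together with the $\eta_\star$-Roth lower bound on $\delta$, are what rescue the argument there. A secondary point is that $\alst\neq 0$ forces the critical region and the region where $\al_l^*$ can approach $0$ to be disjoint, so the two Roth conditions act on independent quantities without interfering. Finally, the logarithmic factors accumulated from $\ln(1/\e)$, from the harmonic telescoping over $|l-l_0|$, and from the truncation at $|m|\le\e^{-\ga}$ all fit inside the final budget of $O(\e^{1/2}\ln(1/\e))$.
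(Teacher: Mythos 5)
Your proposal is correct and follows essentially the same route as the paper's proof: the same decomposition of $\mathcal L(m)$ into the region near $\alst$ (where $\phi''$ degenerates and the type-$\eta_\star$ condition on $\kappa|x_0|$ controls the worst term $l=l_\star$), the region near $\al=0$ (where the type-$\eta_0$ condition bounds $|\al_{l_0}^*|^{-1}$ and the spacing $\asymp1/(\kappa|m|)$ telescopes to a harmonic sum), and the remaining $l$ with $\phmn\asymp|m|$ and $|\al_l^*|\asymp1$; your square-root Taylor estimates match \eqref{phmn ineq}--\eqref{sum 21} and your dominant-term bookkeeping reproduces the thresholds $\bt>\eta_0+2$ and $\bt>\eta_\star+1$ exactly as in \eqref{sum 12} and \eqref{sum 22}--\eqref{sum 23}.
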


Inequality \eqref{main coef orig} (with $m=0$) is the main ingredient in the proof of \eqref{extra terms II} (see appendix~\ref{ssec:ineq II}). 

\begin{lemma}\label{lem:ineq II} Equation \eqref{extra terms II} holds under the assumptions of Theorem~\ref{main-res} and assumptions~\ref{ass:S_A}. 
\end{lemma}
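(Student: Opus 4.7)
The plan is to bound the pointwise difference $|A_0(\al,\e)-A_0(\al_k,\e)|$ by the total variation of $A_0(\cdot,\e)$ on the cell $[\al_k-\Delta\al/2,\al_k+\Delta\al/2]$, then aggregate these local contributions using the bound on $\TV(g_{0,\e},U)$ for intervals $U$ of length $\e^{1/2}$ supplied by Lemma~\ref{lem:apply lemma 1} with $m=0$. No exponential sum cancellation is needed here, which is why the $m=0$ case is much simpler than \eqref{extra terms I}.

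First I would write, for each grid point $\al_k$,
\begin{equation*}
\int_{|\al-\al_k|\le \Delta\al/2}|A_0(\al,\e)-A_0(\al_k,\e)|\dd\al \le \Delta\al\cdot\TV\bigl(A_0(\cdot,\e),[\al_k-\Delta\al/2,\al_k+\Delta\al/2]\bigr).
\end{equation*}
Next, partition $[-\pi/2,\pi/2]$ into $N=O(\e^{-1/2})$ consecutive subintervals $U_j$ of length $\sim\e^{1/2}$, pick a representative $\al_j\in U_j$, and group the $\al_k$'s by the $U_j$ containing them. Since $\Delta\al=\kappa\e\ll\e^{1/2}$ and the cells $[\al_k-\Delta\al/2,\al_k+\Delta\al/2]$ are pairwise disjoint, subadditivity of total variation yields
\begin{equation*}
\sum_{\al_k\in U_j}\TV\bigl(A_0(\cdot,\e),[\al_k-\Delta\al/2,\al_k+\Delta\al/2]\bigr)\le \TV\bigl(A_0(\cdot,\e),\tilde U_j\bigr),
\end{equation*}
where $\tilde U_j\supset U_j$ is enlarged by at most $\Delta\al$ on either side to contain every relevant cell; the enlargement is harmless since $|\tilde U_j|\asymp\e^{1/2}$. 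Applying \eqref{g_e ass 2 v2} with $m=0$, and recalling $g_{0,\e}=\e A_0(\cdot,\e)$ and $\rho(0)=1$, gives $\TV(A_0(\cdot,\e),\tilde U_j)\le c\e^{-1/2}(1+\e^{-1/2}|\al_j|)^{-1}$.

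Combining the above and summing over $j$ produces
\begin{equation*}
\sum_j \Delta\al\cdot c\e^{-1/2}(1+\e^{-1/2}|\al_j|)^{-1}=c\kappa\,\e^{1/2}\sum_j(1+\e^{-1/2}|\al_j|)^{-1},
\end{equation*}
and the remaining sum is a Riemann sum for $\e^{-1/2}\int_{-\pi/2}^{\pi/2}(1+\e^{-1/2}|\al|)^{-1}\dd\al=\int_{-\pi/(2\e^{1/2})}^{\pi/(2\e^{1/2})}(1+|u|)^{-1}\dd u=O(\ln(1/\e))$. This delivers \eqref{extra terms II} and completes the proof.

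The only real subtlety lies in the bookkeeping at the boundaries of the $U_j$, namely ensuring that cells straddling the boundary of a $U_j$ are not double-counted and that the envelope $(1+\e^{-1/2}|\al|)^{-1}$ in \eqref{g_e ass 2 v2} is evaluated at a point of $U_j$ that is accurate up to an absolute constant (which it is, since $\e^{-1/2}|\al|$ varies by at most a constant across $U_j$). Using the enlarged $\tilde U_j$ (or equivalently, assigning each cell to the unique $U_j$ containing its midpoint) settles both issues without affecting the final estimate.
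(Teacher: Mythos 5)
Your proposal is correct and takes essentially the same route as the paper: both reduce the left side of \eqref{extra terms II} to $\Delta\al$ times cell-wise variations of $A_0(\cdot,\e)=\e^{-1}g_{0,\e}$ and then invoke the total-variation control of $g_{0,\e}$ from Lemma~\ref{lem:apply lemma 1} with $m=0$. The only cosmetic difference is that the paper directly cites the global bound $\TV(g_{0,\e},[-\pi/2,\pi/2])\le c\e^{1/2}\ln(1/\e)$ from \eqref{main coef orig}, whereas you re-derive that global bound by summing the local estimates \eqref{g_e ass 2 v2} over $O(\e^{-1/2})$ subintervals of length $\e^{1/2}$ --- which is exactly how the paper obtains \eqref{main coef orig} in the first place (cf. \eqref{var full int}).
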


Combining Lemmas~\ref{lem:exc cases}, \ref{lem:generic} and \ref{lem:ineq II} completes the proof of  Lemma~\ref{lem:main-res-A}.

\section{Proof of \eqref{extra terms I} and \eqref{extra terms II} in case (B)}\label{sec:caseB}

At a high level, the proofs in cases (A) and (B) follow the same outline, see the five steps following \eqref{extra terms II}. By using a partition of unity, if necessary, we can make the following additional assumptions.

\begin{assumptions}\label{ass:S_B} 
$\hspace{1cm}$
\begin{enumerate}
\item $\s$ is sufficiently short and has nonzero curvature at every point;
\item $\s$ is parametrized as follows $[-a,a]\ni \theta\to y(\theta)\in\s$, where the interior unit normal to $\s$ at $y(\theta)$ is $\vec\theta=(\cos\theta,\sin\theta)$;
\item The line through $x_0$ and $y(0)$ is tangent to $\s$ at $y(0)$; and
\item $(y(0)-x_0)\cdot y’(0)>0$.
\end{enumerate}
\end{assumptions}

Assumptions~\ref{ass:S_B}(1,3) imply that no line through $x_0$, other than the line through $x_0$ and $y(0)$, is tangent to $\s$. Assumption~\ref{ass:S_B}(4) means that $x_0$ is to the left of $y(0)$ as shown in  Figure~\ref{fig:rem A_fn}. The case when $x_0$ is to the right of $y(0)$ is completely analogous and is not considered here.

Define $\CA(\theta)$ as the angle $\al\in (-\pi/2,\pi/2)$ such that $\vec\al(\CA(\theta))\cdot (y(\theta)-x_0)\equiv0$, $|\theta|\le a$. In particular, $\CA(0)=0$ (see Figure~\ref{fig:rem A_fn}). The function $\CA(\theta)$ is no longer injective. It is injective if its domain is restricted to $[0,a]$ or $[-a,0]$.

\begin{figure}[h]
{\centerline
{\epsfig{file=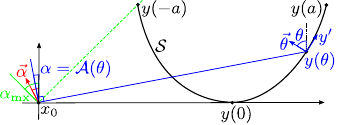, width=8.4cm}}}
\caption{Illustration of case (B).}
\label{fig:rem A_fn}
\end{figure}

Since $x_0\not\in\s$, we use the function $R$ in its original form $R(\theta,\al)=\vec\al\cdot (y(\theta)-x_0)$ (cf. \eqref{A simpl orig}). 
\begin{lemma}\label{lem:aux props rem} Under the assumptions of Theorem~\ref{main-res} and assumptions~\ref{ass:S_B} one has 
\be\label{A props rem}\begin{split}
&\CA(\theta)\asymp\theta^2,\ \CA'(\theta)\asymp\theta,\ |\theta|\le a;\\
&|(\CA^{-1})^{(j)}(t)|=O(|t|^{(1/2)-j}),\ j=0,1,2,\ t\to0.
\end{split}
\ee
Here $\CA^{-1}$ is computed by restricting its range to $[0,a]$ or $[-a,0]$. Also,
\be\label{r props rem}
R(\theta,\al)=b(\theta,\al)(\CA(\theta)-\al),\ (\theta,\al)\in\CD.
\ee
Here $b\in C^1(\CD)$ and $|b(\theta,\al)|\asymp1$, $(\theta,\al)\in\CD$.
\end{lemma}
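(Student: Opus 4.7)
The plan is to exploit the tangency assumption~\ref{ass:S_B}(3), which forces $y(0)-x_0 = D\vec\theta^\perp(0)$ for some $D\neq 0$; the sign of $D$ is fixed by assumption~\ref{ass:S_B}(4), and $|D|$ is bounded away from $0$ since $x_0\notin\s$ and $\s$ is short. By the normal-angle parametrization (assumption~\ref{ass:S_B}(2)), $y'(0)$ is parallel to $\vec\theta^\perp(0)$, while $y''(0)$ has a nonzero component $\R(0)$ along $\vec\theta(0)$ thanks to assumption~\ref{ass:S_B}(1). Taylor expansion around $\theta=0$ thus gives, in the frame $\{\vec\theta(0),\vec\theta^\perp(0)\}$,
\[
y(\theta)-x_0 = \tfrac{\theta^2}{2}\R(0)\,\vec\theta(0) + (D+O(\theta))\,\vec\theta^\perp(0) + O(\theta^3).
\]

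To establish the first line of \eqref{A props rem}, I would substitute this expansion into $\vec\al(\CA(\theta))\cdot(y(\theta)-x_0)\equiv 0$ and expand $\cos\CA\approx 1$, $\sin\CA\approx\CA$ in the same frame, yielding the leading balance $\tfrac{\theta^2}{2}\R(0) + D\,\CA(\theta) + O(\theta^3) = 0$, so $\CA(\theta)=-\R(0)\theta^2/(2D) + O(\theta^3)$. Since $\R(0)$ and $|D|$ are bounded away from $0$, this is the two-sided bound $|\CA(\theta)|\asymp\theta^2$; differentiating the implicit equation produces $\CA'(\theta)\asymp\theta$. Passing to $\CA^{-1}$ on a monotone branch ($[0,a]$ or $[-a,0]$), we get $\CA^{-1}(t)=O(|t|^{1/2})$ from the leading asymptotic, then $(\CA^{-1})'(t)=1/\CA'(\CA^{-1}(t))=O(|t|^{-1/2})$, and differentiating once more together with $|\CA''(\theta)|\le c$ yields $(\CA^{-1})''(t)=O(|t|^{-3/2})$.

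For the factorization \eqref{r props rem}, since $R(\theta,\CA(\theta))\equiv 0$, the fundamental theorem of calculus gives
\[
R(\theta,\al) = (\al-\CA(\theta))\int_0^1 \partial_\al R\bigl(\theta,\CA(\theta)+s(\al-\CA(\theta))\bigr)\,\dd s,
\]
so setting $b(\theta,\al):=-\int_0^1 \partial_\al R(\theta,\CA(\theta)+s(\al-\CA(\theta)))\,\dd s$ provides the required factorization with $b\in C^1(\CD)$ (immediate from $\partial_\al R(\theta,\al)=\vec\al^\perp\cdot(y(\theta)-x_0)\in C^1(\CD)$). For $|b|\asymp 1$, note $\partial_\al R(0,0)=D\neq 0$, and by continuity, shrinking $a$ and the effective $\al$-range in $\CD$ if necessary, $|\partial_\al R|$ remains bounded above and below on $\CD$, hence so does $|b|$.

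The main obstacle is the $|t|^{-3/2}$ singularity of $(\CA^{-1})''$, which requires the sharper two-sided $\CA'(\theta)\asymp\theta$ rather than a mere upper bound and clean control of $\CA''(\theta)$; this singular inverse is what will later force extra care in the case (B) integral estimates that proceed via the change of variables $\al=\CA(\theta)$. A secondary delicate point is verifying $|b|\asymp 1$ uniformly on $\CD$ rather than only locally, which may require restricting $\CD$ in case (B) so as to avoid the degenerate configurations where $\vec\al^\perp$ becomes perpendicular to $y(\theta)-x_0$.
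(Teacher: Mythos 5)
Your proof of \eqref{A props rem} by Taylor expansion in the frame $\{\vec\theta(0),\vec\theta^\perp(0)\}$ is exactly the ``easy computation in coordinates'' that the paper invokes, and your Hadamard/FTC factorization $R(\theta,\al)=(\al-\CA(\theta))\int_0^1\pa_\al R\,\dd s$ integrates in closed form to precisely the paper's explicit kernel $b(\theta,\al)=\pm|y(\theta)-x_0|\,\sin(\CA(\theta)-\al)/(\CA(\theta)-\al)$, so the two arguments are essentially the same.

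The one step that does not survive scrutiny is your justification of $|b|\asymp1$. The pointwise claim that $|\pa_\al R|$ is bounded below on $\CD$ is false: $\pa_\al R(\theta,\al)=\vec\al^\perp\cdot(y(\theta)-x_0)$ vanishes when $\al=\CA(\theta)\mp\pi/2$, and since $\CA(\theta)\in[0,\al_{\text{mx}}]$ such $\al$ lie inside the fixed range $[-\pi/2,\pi/2]$ of $\CD$; this range comes from the sum over $|\al_k|\le\pi/2$ in \eqref{extra terms I} and cannot be shrunk. The factorization is nevertheless sound because $b$ is the \emph{average} of $\pa_\al R$ over the arc from $\CA(\theta)$ to $\al$, not its pointwise value: writing $y(\theta)-x_0=\pm|y(\theta)-x_0|\,\vec\al^\perp(\CA(\theta))$ (which is the content of the tangency/transversality relation $\vec\al(\CA(\theta))\cdot(y(\theta)-x_0)=0$), the average evaluates to $b=\mp|y(\theta)-x_0|\,\sin(\CA(\theta)-\al)/(\CA(\theta)-\al)$. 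Since $|\CA(\theta)-\al|\le\pi/2+\al_{\text{mx}}<\pi$ on $\CD$ and $|y(\theta)-x_0|\asymp1$ because $x_0\notin\s$, this expression is bounded above and below, and it is $C^1$ in $(\theta,\al)$. Replacing your continuity argument by this closed form completes the proof; no restriction of $\CD$ is needed or available.
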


All lemmas in this section are proven in appendix~\ref{sec:caseB proofs}. 
Let $\al_{\text{mx}}$ be such that $[0,\al_{\text{mx}}]=\CA([-a,a])$. In other words, $[0,\al_{\text{mx}}]$ is the set of all $|\al|\le \pi/2$ such that the lines $\{y\in\br^2:\,(y-x_0)\cdot\vec\al=0\}$ intersect $\s$ (see Figure~\ref{fig:rem A_fn}). By shrinking $\s$ even more, if necessary, we can assume that $\al_{\text{mx}}>0$ is as small as we like. Similarly to \eqref{A simpl orig} and \eqref{Am small g}, define
\be\label{Am small g rem}\begin{split}
&g_{m,\e}^{(2)}(\al):=e\left(-m \vec\al\cdot \check x\right)\\
&\times
\int_{|\CA(\theta)-\al| \le (\e\al)^{1/2}}
\int_0^{H_0(\e^{-1/2}\theta)}\tilde\psi_m\left(\e^{-1}R(\theta,\al)+h(\hat t,\theta,\al)\right)
F(\theta,\e \hat t)\dd\hat t\dd\theta,
\end{split}
\ee
where $h$ and $F$ are the same as in \eqref{A-simpl} and \eqref{A simpl orig} and $\al\in\Omega_\e:=[4\e,\al_{\text{mx}}]$.

Denote
\be\label{2 Xi sets}\begin{split}
&\Xi:=\{(m,\al_k):\ m\in\mathbb Z, |m|\le \e^{-\ga},\al_k\in\Omega_\e\},\\
&\Xi^c:=\{(m,\al_k):\ m\in\mathbb Z, |\al_k|\le \pi/2\}\setminus \Xi.
\end{split}
\ee

\begin{lemma}\label{lem:rem A-to-g2} Under the assumptions of Theorem~\ref{main-res} and assumptions~\ref{ass:S_B}, one has
\be\label{rem sums A-g22}\begin{split}
&\sum_{(m,\al_k)\in\Xi^c} |\e A_m(\al_k,\e)|=O(\e^{1/2}\ln(1/\e))\text{ and}\\
&\sum_{(m,\al_k)\in\Xi} |\e A_m(\al_k,\e)-g_{m,\e}^{(2)}(\al_k)|=O(\e^{1/2}\ln(1/\e))
\end{split}
\ee
when \textcolor{black}{$\bt>1$.}
\end{lemma}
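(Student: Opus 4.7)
The plan is to reduce both estimates in \eqref{rem sums A-g22} to one-dimensional integrals by changing variables to $s = \CA(\theta) \in [0, \al_{\text{mx}}]$, with Jacobian $d\theta \asymp ds/\sqrt{s}$ coming from $\CA'(\theta) \asymp \theta$ in Lemma~\ref{lem:aux props rem}. In this new variable $R(\theta,\al) = b(\theta,\al)(s - \al)$ is essentially affine in $s$ (with $b \asymp 1$), and the key pointwise ingredient is the bound $|\tilde\psi_m(t)| \le c\rho(m)(1+t^2)^{-1}$ of Lemma~\ref{lem:psi psider}, applied as $|\tilde\psi_m| \le c\rho(m)$ in peak regions ($|R| \lesssim \e$) and as $|\tilde\psi_m| \le c\rho(m)\e^2/R^2$ in tail regions. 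All estimates then reduce to model integrals of the form $\int \e^2/((s-\al_k)^2 + \e^2) \cdot ds/\sqrt{s}$ followed by summation over the sampling grid in $\al_k$.

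For the first estimate, I partition $\Xi^c$ into (i) $|m| > \e^{-\ga}$; (ii) $|m| \le \e^{-\ga}$ with $\al_k \in [-\pi/2,0) \cup (\al_{\text{mx}}, \pi/2]$ (directions $\vec\al_k$ for which the relevant lines miss $\s$); and (iii) $|m| \le \e^{-\ga}$ with $\al_k \in [0, 4\e)$. Region (i) is handled as in \eqref{extra extra}: the choice $\ga = 1/(2(\bt-1))$ gives $\sum_{|m|>\e^{-\ga}}\rho(m) = O(\e^{1/2})$, which combined with a pointwise $O(1)$ bound and $O(1/\e)$ grid points yields $O(\e^{1/2})$. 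For (ii), a uniform lower bound $|R| \ge c|\al_k - \al_{\text{mx}}|$ gives a sum that fails to decay with $\e$; instead I use the refined local structure $|R| \approx b[|\al_k - \al_{\text{mx}}| + \CA'(\pm a)|a \mp \theta|]$ near $\theta = \pm a$ for $\al_k > \al_{\text{mx}}$, and $|R| \asymp b(\theta^2 + |\al_k|)$ for $\al_k < 0$. These give $\int d\theta/(1+(R/\e)^2) = O(\e^2/|\al_k - \al_{\text{mx}}|)$ and $O(\e^2/|\al_k|^{3/2})$ respectively (for $|\cdot| \gg \e$), which sum to $O(\e\ln(1/\e))$ and $O(\e^{1/2})$ over $\al_k$ and $m$ (using $\bt > 1$ for $\sum_m\rho(m) < \infty$). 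Region (iii) contains $O(1)$ grid points and the scaling $\theta = \sqrt{\e}\, u$ gives $\int d\theta/(1+(R/\e)^2) = O(\sqrt{\e})$ uniformly in $\al_k \in [0, 4\e)$, contributing $O(\sqrt{\e})$.

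For the second estimate, the difference localizes to the $\theta$-tail $\{|\CA(\theta) - \al_k| > (\e\al_k)^{1/2}\}$ (the phase prefactor $e(-m\vec\al\cdot\check x)$ appearing in $g^{(2)}_{m,\e}$ has modulus one and so does not affect absolute-value estimates once the sums are arranged as in the application). On this tail set $|R/\e| \ge c(\al_k/\e)^{1/2} \ge 2c$ since $\al_k \ge 4\e$, so the quadratic decay $|\tilde\psi_m| \le c\rho(m)\e^2/R^2$ holds. After the substitution $s = \CA(\theta)$, the tail integral becomes $\int_{|s-\al_k| > (\e\al_k)^{1/2},\,s \in [0, \al_{\text{mx}}]} \e^2/(s - \al_k)^2 \cdot ds/\sqrt{s}$; splitting at $s = \al_k/2$ separates the pole at $s = \al_k$ from the endpoint singularity at $s = 0$, and each piece yields $O(\e^{3/2}/\al_k)$ per $(m, \al_k)$. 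Summation over $\al_k \in \Omega_\e = [4\e, \al_{\text{mx}}]$ produces the harmonic sum $\sum_{\al_k} 1/\al_k = O(\e^{-1}\ln(1/\e))$, giving $O(\rho(m)\e^{1/2}\ln(1/\e))$ per $m$; summing over $|m| \le \e^{-\ga}$ using $\bt > 1$ yields the claimed $O(\e^{1/2}\ln(1/\e))$.

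The main obstacle is the refined analysis near the boundary of $\CA$'s range in part (ii) of the first estimate, and the careful splitting of the $s$-integral in the second: crude uniform lower bounds on $|R|$ lose too much and would produce sums that either fail to decay or diverge as $\e \to 0$. Both arguments therefore hinge on exploiting the precise local behavior of $R(\theta, \al_k)$ near its extrema in $\theta$ (linear vanishing at $\theta = \pm a$, quadratic vanishing at $\theta = 0$). The logarithmic factor in the final bound is intrinsic, arising from the harmonic-type sum over the grid in $\Omega_\e$ and appearing for $\al_k$ that are neither close to $0$ nor close to $\al_{\text{mx}}$.
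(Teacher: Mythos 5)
Your overall strategy coincides with the paper's: exploit the quadratic tangency $\CA(\theta)\asymp\theta^2$ through the substitution $s=\CA(\theta)$ (Jacobian $\asymp s^{-1/2}$), bound $\tilde\psi_m$ by $c\rho(m)\min(1,\e^2/R^2)$, and reduce everything to model integrals and harmonic-type sums over the grid. Your regions (ii) and (iii), and your treatment of the second estimate (localization to the tail set $|\CA(\theta)-\al_k|>(\e\al_k)^{1/2}$, the split of the $s$-integral at $s=\al_k/2$, the resulting $O(\e^{3/2}/\al_k)$ per term and the harmonic sum giving the logarithm) reproduce the paper's Lemma on the three model cases and its estimates $\Phi_1(\al)=O(\e^{3/2}/\al)$, $\Phi_2(\al)=O(\e/\al^{1/2})$ essentially verbatim, and are correct. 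Your remark about the unimodular prefactor $e(-m\vec\al\cdot\check x)$ is also a fair reading of the intended statement.

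However, your treatment of region (i) ($|m|>\e^{-\ga}$, all $\al_k$) does not close as written. You combine $\sum_{|m|>\e^{-\ga}}\rho(m)=O(\e^{1/2})$ with ``a pointwise $O(1)$ bound and $O(1/\e)$ grid points,'' but that arithmetic gives
\be
O(\e^{1/2})\cdot O(1)\cdot O(1/\e)=O(\e^{-1/2}),
\ee
not $O(\e^{1/2})$. What is actually needed is that the sum over $\al_k$ be $O(1)$ \emph{per fixed} $m$, which requires a pointwise bound that decays in $\al_k$; a uniform $O(\rho(m))$ bound on $|\e A_m(\al_k,\e)|$ is off by a full factor of $\e$. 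For $\al_k\notin\Omega_\e$ your own bounds from regions (ii)--(iii) supply the needed decay (they are uniform in $m$ apart from the $\rho(m)$ factor), but for $\al_k\in\Omega_\e$ you never state the required ``peak'' estimate. It is the analogue of the paper's $\Phi_2$ bound: the effective width of the set where $|R|\lesssim\e$ is $O(\e)$ in the variable $s$, so
\be
|\e A_m(\al_k,\e)|\le c\rho(m)\,\e/\al_k^{1/2},\qquad \al_k\in\Omega_\e,
\ee
and then $\sum_{\al_k\in\Omega_\e}\e/\al_k^{1/2}\asymp\e^{1/2}\sum_{k\le 1/\e}k^{-1/2}=O(1)$, after which the tail of the $m$-sum delivers $O(\e^{1/2})$. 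Note that the crude width $(\e\al_k)^{1/2}$ of the peak set $\Xi_2$ would only give $|\e A_m|\le c\rho(m)\e^{1/2}$, whose $k$-sum is $O(\e^{-1/2})$ --- so the $\e$-width of the region where the Lorentzian factor is actually of order one must be used. This is a localized and fixable gap, but as stated the step fails.
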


By Lemma~\ref{lem:rem A-to-g2}, to prove \eqref{extra terms I} it remains to estimate the double sum on the right in \eqref{rem sums A-g22} (without the terms with $m=0$). 

\begin{lemma}\label{lem:case B last} Under the assumptions of Theorem~\ref{main-res} and assumptions~\ref{ass:S_B}, one has
\be\label{rem sum last}\begin{split}
\sum_{1\le|m|\le \e^{-\ga}}\sum_{\al_k\in\Omega_\e} g_{m,\e}^{(2)}(\al_k)e(\phi(\al_k)/\e)=O(\e^{1/2}\ln(1/\e))\text{ if }\textcolor{black}{\bt>\eta_0+1.}
\end{split}
\ee
\end{lemma}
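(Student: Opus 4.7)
The plan is to follow the five-step program outlined after \eqref{extra terms II}, with Lemma~\ref{lem:rem A-to-g2} discharging steps (1)--(3). After that lemma, what remains is to estimate
$$S_\e:=\sum_{1\le|m|\le\e^{-\ga}}\biggl|\sum_{\al_k\in\Omega_\e} g_{m,\e}^{(2)}(\al_k)\,e(\phi(\al_k)/\e)\biggr|$$
and show $S_\e=O(\e^{1/2}\ln(1/\e))$. The key structural simplification compared with case (A) is that the phase has no degenerate stationary point in the relevant range: $\phi''(\al)=m\vec\al\cdot x_0$, and by Definition~\ref{def:gp}(2) combined with Assumptions~\ref{ass:S_B}(3,4), which force $y(0)-x_0$ to be parallel to $(0,\pm 1)$, the first coordinate of $x_0$ is nonzero. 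Hence $\phi^{\prime\prime}_{\text{mn}}(\Omega_\e)\asymp|m|$, and the partition $\Omega_\e=\cup_l I_l$ with $I_l=\{\al\in\Omega_\e:|\kappa\phi'(\al)-l|\le 1/2\}$ consists of $O(|m|)$ single-component intervals of length $\asymp 1/(\kappa|m|)$; no exceptional interval of $I_\star$-type arises.

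First I would establish a case-(B) analogue of Lemma~\ref{lem:apply lemma 1}: on every subinterval $U\subset\Omega_\e$ of length $O(\e^{1/2})$,
$$|g_{m,\e}^{(2)}(\al)|,\ \TV(g_{m,\e}^{(2)},U)\le c\rho(m)\e^{1/2}(1+\e^{-1/2}\al)^{-1}.$$
The pointwise bound follows from the change of variable $u=R(\theta,\al)/\e$ in \eqref{Am small g rem}, using $R=b(\theta,\al)(\CA(\theta)-\al)$ with $b\asymp 1$ and $\CA'(\theta)\asymp\al^{1/2}$ from Lemma~\ref{lem:aux props rem}. The BV bound comes from differentiating in $\al$: the interior term is controlled by $|\tilde\psi_m'|\le c\rho(m)(1+t^2)^{-1}$ from Lemma~\ref{lem:psi psider}, and the boundary terms from the $\al$-dependent cutoff $|\CA(\theta)-\al|\le(\e\al)^{1/2}$ are absorbed because on that boundary $|R|/\e\asymp(\al/\e)^{1/2}\ge 2$, so $\tilde\psi_m$ there decays like $\rho(m)\e/\al$.

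With these estimates in hand, Lemma~\ref{lem:sum1 est} replaces each exponential sum on $I_l$ by $(1/\Delta\al)\int_{I_l}(g_{m,\e}^{(2)}h_l)\,e(\phi_l/\e)\,d\al$, with total replacement error summed over $l$ and $m$ bounded by $O(\e^{1/2}\ln(1/\e))$ when $\bt>2$, by the same bookkeeping as in case (A). Lemma~\ref{lem:int1 est}, combined with $\phi^{\prime\prime}_{\text{mn}}\asymp|m|$, then gives
$$W_{l,m}\le c\rho(m)\e^{1/2}|\al_l^*|^{-1}\bigl[1+|m|^{-1}\ln(1/\e)+|m|^{-1}\bigr].$$
For the $O(|m|)$ indices $l$ with $\al_l^*\in\Omega_\e$, $|\al_l^*|\ge 4\e$ and the spacing $\asymp 1/(\kappa|m|)$ yield $\sum_l|\al_l^*|^{-1}\asymp|m|\ln(1/\e)$, contributing $O(\e^{1/2}\ln(1/\e))$ after the $\rho$-weighted sum over $|m|\le\e^{-\ga}$ whenever $\bt>2$.

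The main obstacle is the at most one boundary index $l$ with $\al_l^*<4\e$ but $I_l\cap\Omega_\e\ne\emptyset$, for which the crude use of Lemma~\ref{lem:int1 est} would force $\bt>\eta_0+2$. I would treat this single $I_l$ separately, integrating by parts on $I_l\cap[4\e,\al_{\text{mx}}]$ so that the dominant contribution comes from the lower endpoint $\al=4\e$, where $|\phi_l'(4\e)|\asymp|m|(4\e-\al_l^*)$. Writing $\kappa x_{0,2}=\pm\kappa\vec\al_0^\perp\cdot x_0$, with $\vec\al_0^\perp=(0,\pm 1)$ the tangent direction to $\s$ at the point of tangency $y(0)$, the Diophantine hypothesis (Definition~\ref{def:gp}(4), interpreted in case (B) at $y(0)$) gives $\langle\kappa m x_{0,2}\rangle\ge c|m|^{-\eta_1}$ for every $\eta_1>\eta_0$, whence $4\e-\al_l^*\gtrsim|m|^{-\eta_1-1}$. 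This saves one factor of $|m|$ relative to the naive estimate, producing a contribution of order $\sum_m\rho(m)\e^{1/2}|m|^{\eta_0}=O(\e^{1/2})$ precisely when $\bt>\eta_0+1$, which is the stated threshold.
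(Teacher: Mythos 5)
Your high-level plan (convert the sums to integrals via Lemma~\ref{lem:sum1 est}, integrate by parts, and invoke a Diophantine bound near the smallest zero of $\kappa\phi_l'$) has the right shape, but two load-bearing steps do not hold as stated. First, the $\TV$ bound for $g^{(2)}_{m,\e}$ cannot be obtained ``by differentiating in $\al$''. The interior term alone gives $|\pa_\al g^{(2)}_{m,\e}(\al)|\le c\rho(m)\al^{-1/2}$ (the factor $\e^{-1}$ from $\pa_\al(\e^{-1}R)$ is only partially absorbed by the $\theta$-integration), so over an interval of length $\e^{1/2}$ this yields $\TV\le c\rho(m)\e^{1/2}\al^{-1/2}$, which exceeds your claimed $c\rho(m)\e/\al$ by the factor $(\al/\e)^{1/2}$ and is fatal for $\al\asymp1$. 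More fundamentally, once the oscillation is tamed by the substitution $s=\CA(\theta)-\al$, the $\al$-dependence migrates into the upper limit $H_0(\e^{-1/2}\CA^{-1}(s+\al))$ of the $\hat t$-integral; $H_0$ is only of bounded variation, not differentiable, and your proposal never invokes assumption~\ref{ass:H0}(2) in case (B) at all. This is exactly why the paper introduces the quadratically spaced intervals $U_n=\e[n^2,(n+1)^2]$, freezes $\al=\e n^2$ everywhere except inside $H_0$ (Lemma~\ref{lem:rem g-to-g22}), and bounds the resulting variation by $\TV(H_0,\tilde U_n(s))\le cV_0$ to obtain $\TV(g^{(2,2)}_{m,\e},U_n)\le c\rho(m)\e^{1/2}/n$ (Lemma~\ref{lem:rem gme}).

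Second, the summation over $l$ is wrong where it matters. The claim $\sum_l|\al_l^*|^{-1}\asymp|m|\ln(1/\e)$ fails because the smallest root in $\Omega_\e$, namely $\al_{l_0+1}^*$, is bounded below only by $c\langle m\kappa\vec\al_0^\perp\cdot x_0\rangle/|m|\ge c|m|^{-\eta_1-1}$ (see \eqref{ang0 bnd}); under $\bt>\eta_0+1$ this root does lie in $\Omega_\e$ (so it is not your ``boundary index with $\al_l^*<4\e$''), yet it can be far smaller than $1/|m|$ and than $\e^{1/2}$. For this index, black-box use of Lemma~\ref{lem:int1 est} gives $W_{l_0+1,m}\ge c\rho(m)\e^{1/2}/\al_{l_0+1}^*$ from the $J_1$ term alone, which can be as large as $c\rho(m)\e^{1/2}|m|^{1+\eta_1}$ and whose sum over $m$ requires $\bt>\eta_0+2$; indeed the proof of Lemma~\ref{lem:int1 est} itself uses $\al_l^*+\e^{1/2}\asymp\al_l^*$, which presupposes $\bt>\eta_0+2$. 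Your Diophantine fix is also aimed at the wrong point: Definition~\ref{def:gp}(4) bounds $\al_l^*$ away from $0$, not away from $4\e$. The paper reaches $\bt>\eta_0+1$ only because (i) the few $U_n$ adjacent to $\al_l^*$ are disposed of by the lattice-count bound $\sum_{\al_k\in U_n}|g^{(2,2)}_{m,\e}(\al_k)|\le c\rho(m)\e^{1/2}$, uniform in $n$ (there are only $O(n)$ points $\al_k$ in $U_n$), and (ii) the integration by parts on the remaining $U_n$ carries the extra factor $1/|m|$ from $|\sin(\pi\kappa\phi_l'(\al))|^{-1}\le c(|m|\,|\al-\al_l^*|)^{-1}$, which exactly cancels the $|m|$ in $1/\al_{l_0+1}^*\le c|m|/\langle m\kappa\vec\al_0^\perp\cdot x_0\rangle$ (see \eqref{rem int sum_n 13}--\eqref{rem sum_l v2}). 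Without these two ingredients your argument establishes the lemma only for $\bt>\eta_0+2$.
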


This completes the proof of \eqref{extra terms I} in case (B). 

\begin{lemma}\label{lem:ineq II B} Equation \eqref{extra terms II} holds under the assumptions of Theorem~\ref{main-res} and assumptions~\ref{ass:S_B}.
\end{lemma}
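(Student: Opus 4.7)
\emph{Proof proposal.} The plan mirrors the proof of Lemma~\ref{lem:ineq II} and reduces the statement to a total variation estimate on $A_0$. For each $\al_k$, the elementary inequality
\begin{equation*}
\int_{|\al-\al_k|\le\Delta\al/2}|A_0(\al,\e)-A_0(\al_k,\e)|\,\dd\al \le \Delta\al\cdot\TV\bigl(A_0(\cdot,\e),[\al_k-\tfrac{\Delta\al}{2},\al_k+\tfrac{\Delta\al}{2}]\bigr),
\end{equation*}
combined with the fact that the intervals $[\al_k-\Delta\al/2,\al_k+\Delta\al/2]$ form an essentially disjoint cover of $[-\pi/2,\pi/2]$ and that $\Delta\al=\kappa\e$, reduces \eqref{extra terms II} to proving
\begin{equation*}
\TV\bigl(\e A_0(\cdot,\e),[-\pi/2,\pi/2]\bigr)=O(\e^{1/2}\ln(1/\e)).
\end{equation*}

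To establish this bound I would use the partition $[-\pi/2,\pi/2]=\Omega_\e\cup\Omega_\e^c$ from Section~\ref{sec:caseB}, where $\Omega_\e=[4\e,\al_{\text{mx}}]$. On $\Omega_\e^c$, either the line through $x_0$ orthogonal to $\vec\al$ misses $\s$ entirely (so $|R(\theta,\al)|$ is uniformly bounded below, and the decay of $\tilde\psi_0$ afforded by Lemma~\ref{lem:psi psider} makes both $\e A_0$ and $\partial_\al(\e A_0)$ of size $O(\e^2)$ and $O(\e)$, respectively), or $\al$ lies in the $O(\e)$-length buffer near the tangent direction $\al=0$, on which Lemma~\ref{lem:aux props rem} confines the effective range of $\theta$ to an interval of length $O(\e^{1/2})$, yielding $|\e A_0(\al,\e)|=O(\e^{1/2})$ and $|\partial_\al(\e A_0)|=O(\e^{-1/2})$, so the variation over a buffer of length $O(\e)$ is $O(\e^{1/2})$. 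On $\Omega_\e$ I would write $\e A_0(\al,\e)=g_{m,\e}^{(2)}(\al)\big|_{m=0}+r_\e(\al)$, where $r_\e$ captures the $\theta$-contribution outside $|\CA(\theta)-\al|\le(\e\al)^{1/2}$; its variation is controlled by the same nonstationary estimates that underlie Lemma~\ref{lem:rem A-to-g2} at $m=0$.

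The main step is the case-(B) analog of \eqref{g_e ass 2 v2}: for any interval $U\subset\Omega_\e$ of length $(\e\al)^{1/2}$,
\begin{equation*}
\TV\bigl(g_{m,\e}^{(2)}\big|_{m=0},U\bigr)\le c\,\e^{1/2}\bigl(1+\e^{-1/2}\al^{1/2}\bigr)^{-1}.
\end{equation*}
This is obtained by differentiating the representation \eqref{Am small g rem} in $\al$ and using Lemma~\ref{lem:aux props rem} to translate $|\CA(\theta)-\al|\le(\e\al)^{1/2}$ into $|\theta-\CA^{-1}(\al)|=O((\e/\al)^{1/2})$, Lemma~\ref{lem:psi psider} to bound $\tilde\psi_0$ and $\tilde\psi_0'$, and Assumption~\ref{ass:H0}(2) to absorb the variation contributed by the $H_\e$-dependent $\hat t$-limits. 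Summing this local bound over a partition of $\Omega_\e$ into intervals of length $(\e\al)^{1/2}$ and converting the discrete sum to an integral gives
\begin{equation*}
\int_{4\e}^{\al_{\text{mx}}}\e^{1/2}\bigl(1+\e^{-1/2}\al^{1/2}\bigr)^{-1}(\e\al)^{-1/2}\,\dd\al=O(\e^{1/2}\ln(1/\e)),
\end{equation*}
as required. The expected obstacle is the singular Jacobian $(\CA^{-1})'(\al)\asymp\al^{-1/2}$ at $\al=0$ together with the merely BV regularity of $H_0$, which is exactly the tangent-line singularity that drives Lemma~\ref{lem:case B last}; the desired TV estimate should therefore follow by specializing the arguments there to the $m=0$ case, in which the $\rho(m)$-based restrictions on $\bt$ become vacuous since $\rho(0)=1$.
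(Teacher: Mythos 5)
Your proposal is correct and follows essentially the same route as the paper: split $[-\pi/2,\pi/2]$ into $\Omega_\e$ and its complement, approximate $\e A_0$ on $\Omega_\e$ by the case-(B) functions $g_{0,\e}^{(2)}$ (resp.\ $g_{0,\e}^{(2,2)}$) using Lemmas~\ref{lem:rem A-to-g2} and \ref{lem:rem g-to-g22} with $m=0$, and conclude from the total-variation bound $\TV(g_{0,\e}^{(2,2)},U_n)\le c\e^{1/2}/n$ of Lemma~\ref{lem:rem gme} summed over the $O(\e^{-1/2})$ intervals $U_n$ (your intervals of length $(\e\al)^{1/2}$ with TV bound $\e/\al^{1/2}$ are exactly these $U_n$), yielding the harmonic sum $O(\e^{1/2}\ln(1/\e))$. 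The only cosmetic differences are that the paper handles $[-\pi/2,\pi/2]\setminus\Omega_\e$ by the sup-bound on $|A_0|$ from Lemma~\ref{lem:three cases} rather than a derivative/TV estimate (and note your ``$O(\e^2)$'' claim there is too strong near $\al_{\text{mx}}$, where the bound only degrades like $\e(1+(\al-\al_{\text{mx}})/\e)^{-1}$, though this does not affect the conclusion).
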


Thus, we have proved Lemma~\ref{lem:main-res-BC} in case (B).

\section{Proof of Lemma~\ref{lem:main-res-BC} in case (C)}\label{sec:caseC}

Let $V$ be a small neighborhood of $x_0$. By using a partition of unity, if necessary, we can make the following additional assumptions in case (C).

\begin{assumptions}\label{ass:S_C}
$\hspace{1cm}$
\begin{enumerate}
\item The parametrization of $\s$ is in terms of some regular map $[-a,a]\ni u\to y(u)\in C^4$;
\item $\s$ is sufficiently short and $|(y(u)-x)^\perp\cdot y’(u)|\ge c>0$, $|u|\le a$, $x\in V$; and
\item There exists $0<\de<a$ such that $\fpe(y(u)+t\vec\theta(u))=0$ for all $a-\de\le |u|\le a$, $t\in\br$ and $\e>0$ sufficiently small.
\end{enumerate}
\end{assumptions}

\begin{figure}[h]
{\centerline
{\epsfig{file=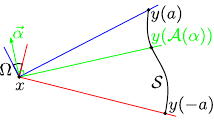, width=6.0cm}}}
\caption{Illustration of case (C).}
\label{fig:rem2 A_fn}
\end{figure}

All the estimates below are uniform with respect to $x\in V$, so the $x$-dependence of various quantities is omitted from notation. Pick any $x\in V$ and assume that it is fixed throughout this section. Let $\Omega$ be the set of all $|\al|\le\pi/2$ such that the lines $\{y\in\br^2:\,(y-x)\cdot\vec\al=0\}$ intersect $\s$. Let $u=\CA(\al)$, $\al\in\Omega$, be determined by solving $(y(u)-x)\cdot\vec\al=0$, see Figure~\ref{fig:rem2 A_fn}. By  assumptions~\ref{ass:S_C}, the solution $u=\CA(\al)$ is unique for each $\al\in\Omega$.

By the definition of case (C), the intersection between any line through $x$ and $\s$ is transverse (up to the endpoints of $\s$). Hence 
$|\CA'(\al)|=|y(\CA(\al))-x|/|\vec\al\cdot y'(\CA(\al))|$ and 
\be\label{theta deriv min}
0<\min_{\al\in\Omega}|\vec\al\cdot y'(\CA(\al))|,\ 
0<\min_{\al\in\Omega}|\CA'(\al)|\le \max_{\al\in\Omega}|\CA'(\al)|<\infty.
\ee

Define similarly to \eqref{Am small g rem}, 
\be\label{rem2 gme2}\begin{split}
g_{m,\e}^{(1)}(\al):=&\int_{|u-\CA(\al)|\le\e^{1/2}}\int_0^{H_0(\e^{-1/2}u)}\tilde\psi_m\left(\e^{-1}R(u,\al)+h(\hat t,u,\al)\right)\\
&\hspace{3cm}\times F(u,\e \hat t)\dd\hat t\dd u,\ \al\in\Omega,\\
R(u,\al):=&\vec\al\cdot (y(u)-x),\ F(u,t):=\fpe(y(u)+t\vec\theta(u)) |\pa y/\pa(u,t)|,\\ 
h(\hat t,u,\al):=&\hat t\cos(\theta(u)-\al),
\end{split}
\ee
where $\vec\theta(u)$ is a continuous unit vector function such that $\vec\theta(u)\cdot y’(u)\equiv0$.
Since $\check x$ is absorbed by $x$, the term $\al\cdot\check x$ is not a part of the function $h$. 

\begin{lemma}\label{lem:rem2 A-to-g2} Under the assumptions of Theorem~\ref{main-res} and assumptions~\ref{ass:S_C}, one has
\be\label{rem2 sums A-g22}\begin{split}
\sum_{m\in\BZ}& \sum_{|\al_k|\le\pi/2,\al_k\not\in\Omega} |\e A_m(\al_k,\e)|
= O(\e)\quad\text{and}\\
\sum_{m\in\BZ}&\sum_{\al_k\in\Omega} |\e A_m(\al_k,\e)-g_{m,\e}^{(1)}(\al_k)|
= O(\e^{1/2})\quad\text{if}\quad\textcolor{black}{\bt>1.}
\end{split}
\ee
Moreover, for any $m\in\mathbb Z$ one has
\be\label{rem2 g2 bnd}
|g_{m,\e}^{(1)}(\al)|\le c\rho(m)\e,\ \al\in\Omega,
\ee
and
\be\label{rem2 TV bnd}
\TV(g_{m,\e}^{(1)},U)\le c\rho(m)\e \text{ for any interval } U\subset\Omega,\ |U|=O(\e^{1/2}). 
\ee
\end{lemma}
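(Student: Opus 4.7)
The plan is to prove the four claims of Lemma~\ref{lem:rem2 A-to-g2} in turn, exploiting two features specific to case (C): no line through $x$ is tangent to $\s$ (so every line--$\s$ intersection is transverse), and $\fpe$ vanishes on $a-\de\le|u|\le a$ by assumption~\ref{ass:S_C}(3). All four bounds ultimately rest on the Fourier coefficient decay \eqref{four-coef-bnd} for $\tilde\psi_m$.

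For the first line of \eqref{rem2 sums A-g22}: when $\al\not\in\Omega$ the line $\{y:\vec\al\cdot(y-x)=0\}$ does not meet $\s$, so together with the support condition on $\fpe$ one has $|R(u,\al)|\ge c>0$ uniformly on the relevant $u$-range. Then \eqref{four-coef-bnd} gives $|\tilde\psi_m(\e^{-1}R+h)|\le c\rho(m)\e^2$, hence $|\e A_m(\al,\e)|=O(\rho(m)\e^2)$, and summing over the $O(1/\e)$ admissible $\al_k$ and (using $\bt>1$) over $m$ produces $O(\e)$. For the second line the claim is $|\e A_m(\al_k,\e)-g_{m,\e}^{(1)}(\al_k)|=O(\rho(m)\e^{3/2})$: on $|u-\CA(\al)|>\e^{1/2}$, transversality (encoded in \eqref{theta deriv min}) yields $|R(u,\al)|\ge c|u-\CA(\al)|$, so $|\tilde\psi_m(\e^{-1}R+h)|\le c\rho(m)\e^2/(u-\CA(\al))^2$; integration over $|u-\CA(\al)|\ge\e^{1/2}$ produces the factor $\e^{3/2}$. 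Summation then yields $O(\e^{1/2})$.

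For the pointwise bound \eqref{rem2 g2 bnd} I substitute $v=u-\CA(\al)$ and rescale $w=v/\e$: the integrand is controlled by $c\rho(m)/(1+w^2)$, the Jacobian supplies a factor of $\e$, and the $w$-integral converges. The TV estimate \eqref{rem2 TV bnd} is the main obstacle. After the same change of variables, I split the $\al$-dependence of $g_{m,\e}^{(1)}$ into a smooth part (from $R(\CA(\al)+v,\al)$, $h$, $F$, and $\CA$) and a BV part (from the upper limit $H_0(\e^{-1/2}(\CA(\al)+v))$). The crucial observation for the smooth part is that $\pa_\al[R(\CA(\al)+v,\al)]$ vanishes identically in $\al$ at $v=0$ (since $R(\CA(\al),\al)\equiv 0$), and is therefore $O(v)$ by Taylor expansion; consequently $\pa_\al\xi=O(|v|/\e)+O(1)$ at fixed $v$. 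Integration then gives a smooth contribution to $|\pa_\al g_{m,\e}^{(1)}|$ of order $\rho(m)\e\ln(1/\e)$, and hence a smooth TV over $|U|=O(\e^{1/2})$ of order $\rho(m)\e^{3/2}\ln(1/\e)$, which is absorbed into $O(\rho(m)\e)$. For the BV part, assumption~\ref{ass:H0}(2) gives $\TV_\al(H_0(\e^{-1/2}(\CA(\al)+v)),U)\le V_0\e^{-1/2}|\CA(U)|=O(1)$ when $|U|=O(\e^{1/2})$; multiplying by $|\tilde\psi_m F|\le c\rho(m)(1+(v/\e)^2)^{-1}$ and integrating over $|v|\le\e^{1/2}$ yields $O(\rho(m)\e)$. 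Combining these two contributions gives \eqref{rem2 TV bnd}.

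The main difficulty is precisely this TV estimate: without the cancellation $\pa_\al[R(\CA(\al)+v,\al)]|_{v=0}=0$, the smooth Lipschitz constant of $\tilde\psi_m(\xi)$ would be of order $\rho(m)/\e$, producing a TV of order $\rho(m)\e^{1/2}$ rather than $\rho(m)\e$. Equally importantly, the ``BV proportional to interval length'' form of assumption~\ref{ass:H0}(2)---rather than mere bounded variation on $\br$---is what forces the BV contribution to remain proportional to $\e$ on intervals of length $O(\e^{1/2})$, so that both smooth and BV parts of the variation are simultaneously $O(\rho(m)\e)$.
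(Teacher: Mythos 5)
Your proposal is correct and follows essentially the same route as the paper: the same uniform lower bound on $|R|$ off $\Omega$ (using assumption~\ref{ass:S_C}(3)), the same truncation error estimate $O(\rho(m)\e^{3/2})$ per term, the same change of variables $\zeta=u-\CA(\al)$ for the pointwise bound, and the same splitting of the $\al$-dependence into a differentiable part and an $H_0$-part for the TV bound, with your cancellation $\pa_\al R(\CA(\al)+v,\al)=O(v)$ being exactly the paper's factorization $R=b(u,\al)(u-\CA(\al))$ with bounded $\pa b$, yielding the same $\rho(m)(\e^{3/2}\ln(1/\e)+\e)$ total.
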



\begin{lemma}\label{lem:rem2 sum g2} Under the assumptions of Theorem~\ref{main-res} and assumptions~\ref{ass:S_C}, one has
\be\label{rem2 sum g2}\begin{split}
\sum_{m\in\BZ,m\not=0}\sum_{\al_k\in \Omega} g_{m,\e}^{(1)}(\al_k)e(\phi(\al_k)/\e)=O(\e^{1/2}\ln(1/\e))
\end{split}
\ee
if \textcolor{black}{$\bt>\max(2,(\eta_\star/2)+1)$.}
\end{lemma}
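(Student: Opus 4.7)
The plan is to follow the five-step strategy from Section~\ref{sec:beg proof}, closely paralleling the case (A) argument in Section~\ref{sec:end A}. Step (1) is already handled by Lemma~\ref{lem:rem2 A-to-g2}, which replaces $\e A_m(\al_k,\e)$ by $g_{m,\e}^{(1)}(\al_k)$. The preliminary truncation of the $m$-sum to $1\le|m|\le \e^{-\ga}$ with $\ga=1/(2(\bt-1))$ is immediate: by \eqref{rem2 g2 bnd} the tail satisfies $\sum_{|m|\ge\e^{-\ga}}\sum_{\al_k\in\Omega}|g_{m,\e}^{(1)}(\al_k)|\le c\sum_{|m|\ge\e^{-\ga}}\rho(m)=O(\e^{1/2})$ when $\bt>1$.

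Since the phase $\phi(\al)=-m\vec\al\cdot x_0$ is identical to that of case (A), $\phi''(\al)=m\vec\al\cdot x_0$ vanishes at the two angles $\alst$ for which $\vec{\alst}\perp x_0$; geometrically, these are the directions of the line joining the origin to $x_0$. By Definition~\ref{def:gp}(2) this line is not tangent to $\s$, so either $\alst\notin\Omega$, in which case $\phi''$ is uniformly bounded below on $\Omega$ and no exceptional interval is needed, or $\alst\in\Omega$ and the ray meets $\s$ transversely. In the latter case I would introduce the exceptional interval $I_\star$ near $\alst$ and the regular intervals $I_l$, $l\in\mathcal L(m)$, covering $\Omega\setminus I_\star$, exactly as in \eqref{special l}--\eqref{main intervals}. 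The contribution of $I_\star$ is controlled by the argument of Lemma~\ref{lem:exc cases}, which depends only on an $L^\infty$ bound on the coefficient function together with the type-$\eta_\star$ irrationality of $\kappa|x_0|$; since \eqref{rem2 g2 bnd} gives the strong uniform bound $|g_{m,\e}^{(1)}|\le c\rho(m)\e$, the proof transfers and yields $O(\e^{1/2}\ln(1/\e))$ under $\bt>(\eta_\star/2)+1$.

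For each regular interval $I_l$ I apply Lemma~\ref{lem:sum1 est} with $g=g_{m,\e}^{(1)}$. Covering $I_l$ by $\lceil|I_l|/\e^{1/2}\rceil$ subintervals of length $\e^{1/2}$ and using \eqref{rem2 TV bnd} gives $\TV(g_{m,\e}^{(1)},I_l)\le c\rho(m)\e^{1/2}|I_l|$, while \eqref{rem2 g2 bnd} gives $\int_{I_l}|g_{m,\e}^{(1)}|\,\dd\al\le c\rho(m)\e|I_l|$. Combined with $\phi^{\prime\prime}_{\text{mx}}(I_l)\le c|m|$, the sum-to-integral conversion error on each $I_l$ is $O(\rho(m)\e^{1/2}|I_l|+|m|\rho(m)\e|I_l|)$; summing over the $O(|m|)$ intervals (whose lengths total at most $\pi$) and over $1\le|m|\le\e^{-\ga}$ produces a cumulative conversion error of $O(\e^{1/2})$. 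The remaining oscillatory integrals $(1/\Delta\al)\int_{I_l}g_{m,\e}^{(1)}h_l\,e(\phi_l/\e)\,\dd\al$ are then bounded by the integration-by-parts argument behind Lemma~\ref{lem:int1 est}, where the uniform bound $|g_{m,\e}^{(1)}|\le c\rho(m)\e$ replaces hypothesis \eqref{g_e ass 1} and actually simplifies the analysis; this produces a pointwise estimate of the form $c\rho(m)\e^{1/2}[1+\ln(1/\e)/\phi^{\prime\prime}_{\text{mn}}+|m|/(\phi^{\prime\prime}_{\text{mn}})^2]/|\al_l^*|$ for each $(m,l)$, whose total over $l$ and $m$ gives the remaining $O(\e^{1/2}\ln(1/\e))$ contribution and requires $\bt>2$ to absorb the $|m|/(\phi^{\prime\prime}_{\text{mn}})^2$ factor.

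The main obstacle is the joint control of $I_\star$ together with the regular intervals $I_l$ whose index $l$ is close to $l_\star=\lfloor\kappa\phi'(\alst)\rfloor$: there $\phi^{\prime\prime}_{\text{mn}}(I_l)$ is small, the generic integration-by-parts bound degenerates, and one must track the proximity of $\kappa\phi'(\alst)$ to its nearest integer via the type-$\eta_\star$ hypothesis on $\kappa|x_0|$. This is precisely where the condition $\bt>(\eta_\star/2)+1$ in the statement originates; outside this degenerate zone the routine estimates above suffice and produce the complementary condition $\bt>2$.
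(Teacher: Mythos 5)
Your overall architecture matches the paper's: convert the sums over $\al_k\in\Omega$ to integrals via Lemma~\ref{lem:sum1 est} using the bounds \eqref{rem2 g2 bnd}--\eqref{rem2 TV bnd}, integrate by parts on the regular intervals $I_l$, and transfer the Lemma~\ref{lem:exc cases} argument to $I_\star$ (the paper does exactly this, observing that \eqref{rem2 g2 bnd}, \eqref{rem2 TV bnd} coincide with \eqref{exc2 g_e ass}). The conversion-error estimate and the treatment of $I_\star$ are essentially correct.

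The gap is in the regular intervals. First, the per-interval bound you quote, $c\rho(m)\e^{1/2}\bigl[1+\ln(1/\e)/\phmn+|m|/(\phmn)^2\bigr]/|\al_l^*|$, is \eqref{int1 est} verbatim, but its hypothesis \eqref{g_e ass 1} --- the decay $(1+\e^{-1/2}|\al|)^{-1}$ --- is not available in case (C): the bound \eqref{rem2 g2 bnd} is uniform in $\al$ (hence weaker near $\al=0$), and the factor $1/|\al_l^*|$ does not arise; nor should it, since in case (C) the point $\al=0$ plays no role and no $\eta_0$ condition appears in the statement. Second, and more seriously, your claim that the $|m|/(\phmn)^2$ term is absorbed by $\bt>2$ fails near $l_\star$: by \eqref{phmn ineq} and \eqref{sum 21} with $n=2$, $\sum_{l}|m|/\phmn(I_l)^2\le c\langle m\kappa\vec\al_\star^\perp\cdot x_0\rangle^{-1}$, and summability of $\rho(m)\langle m\kappa\vec\al_\star^\perp\cdot x_0\rangle^{-1}$ over $m$ requires $\bt>\eta_\star+1$, which exceeds $\max(2,(\eta_\star/2)+1)$ whenever $\eta_\star>1$. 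So the route as written does not prove the lemma under the stated hypothesis. The paper avoids the quadratic term altogether: inserting \eqref{rem2 g2 bnd}--\eqref{rem2 TV bnd} directly into the integration by parts yields $|J_{l,2}|\le c\rho(m)\e^{1/2}\ln(1/\e)/\phmn(I_l^+)$ (see \eqref{rem2 integral 13}), i.e.\ only the first power of $1/\phmn$, and the sum over $l$ is then handled by \eqref{sum 21} with $n=1$, costing only $\bt>(\eta_\star+1)/2\le(\eta_\star/2)+1$. Your closing paragraph correctly names the degenerate zone near $l_\star$ as the main obstacle, but it does not supply the estimate that overcomes it, and it is inconsistent with your earlier assertion that $\bt>2$ suffices for the regular intervals.
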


Combining \eqref{rem2 sums A-g22} and \eqref{rem2 sum g2} proves \eqref{extra terms I}. 

\begin{lemma}\label{lem:rem2 sum g3} Equation \eqref{extra terms II} holds under the assumptions of Theorem~\ref{main-res} and assumptions~\ref{ass:S_C}. 
\end{lemma}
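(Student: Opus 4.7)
My plan is to follow the pattern of Lemmas~\ref{lem:ineq II} and \ref{lem:ineq II B}, using the approximation $\e A_0\approx g_{0,\e}^{(1)}$ from Lemma~\ref{lem:rem2 A-to-g2} as a bridge to a quantity with good total-variation control. The first half of \eqref{rem2 sums A-g22} with $m=0$ gives $\sum_{\al_k\notin\Omega}|\e A_0(\al_k,\e)|=O(\e)$; combined with the uniform bound $|A_0|=O(1)$, this shows that the samples $\al_k\notin\Omega$ contribute at most $O(\e)$ to the left-hand side of \eqref{extra terms II}, so attention restricts to $\al_k\in\Omega$. For each such $\al_k$, insert $\pm\e^{-1}g_{0,\e}^{(1)}$ evaluated at $\al$ and at $\al_k$ and use the triangle inequality to split $|A_0(\al)-A_0(\al_k)|$ into three summands: $\e^{-1}|g_{0,\e}^{(1)}(\al)-g_{0,\e}^{(1)}(\al_k)|$ (interior variation), $\e^{-1}|\e A_0(\al_k)-g_{0,\e}^{(1)}(\al_k)|$ (discrete pointwise error), and $\e^{-1}|\e A_0(\al)-g_{0,\e}^{(1)}(\al)|$ (continuous pointwise error).

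The interior variation is controlled by the total variation of $g_{0,\e}^{(1)}$ on $\Omega$. Covering $\Omega$ by $O(\e^{-1/2})$ adjacent intervals of length $\e^{1/2}$ and applying \eqref{rem2 TV bnd} with $m=0$ on each gives $\TV(g_{0,\e}^{(1)},\Omega)=O(\e^{1/2})$. Since the windows $[\al_k-\Delta\al/2,\al_k+\Delta\al/2]$ partition $[-\pi/2,\pi/2]$, the corresponding contribution to \eqref{extra terms II} is at most $\e^{-1}\Delta\al\,\TV(g_{0,\e}^{(1)},\Omega)=\kappa\cdot O(\e^{1/2})=O(\e^{1/2})$. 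The discrete pointwise error integrates trivially to $\Delta\al$ times itself; summing over $\al_k\in\Omega$ and invoking the second half of \eqref{rem2 sums A-g22} yields $\kappa\sum_{\al_k\in\Omega}|\e A_0(\al_k)-g_{0,\e}^{(1)}(\al_k)|=O(\e^{1/2})$.

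For the continuous pointwise error I would establish the uniform estimate
\be
|\e A_0(\al)-g_{0,\e}^{(1)}(\al)|=O(\e^{3/2}),\ \al\in\Omega.
\ee
By construction (cf.~\eqref{rem2 gme2}), this quantity is the portion of the integral taken over $|u-\CA(\al)|>\e^{1/2}$. Transversality, expressed by \eqref{theta deriv min}, gives $|R(u,\al)|\asymp|u-\CA(\al)|$, and combining with the decay $|\tilde\psi_0(t)|\le c(1+t^2)^{-1}$ from Lemma~\ref{lem:psi psider} yields $|\tilde\psi_0(\e^{-1}R+h)|\le c(\e/|u-\CA(\al)|)^2$ on the excised region. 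Integrating in $u$ from $\e^{1/2}$ outward produces the factor $\e^2\cdot\e^{-1/2}=\e^{3/2}$, while the bounded $\hat t$-integration and smooth factor $F$ contribute only $O(1)$. Summing the window integrals of this piece then gives $\e^{-1}\cdot O(\e^{3/2})\cdot|\Omega|=O(\e^{1/2})$.

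Combining the three contributions yields \eqref{extra terms II} with the slightly sharper rate $O(\e^{1/2})$ (no logarithm required). The main technical obstacle is the continuous pointwise bound above: its discrete analogue at sample points is already in Lemma~\ref{lem:rem2 A-to-g2}, but it alone cannot dominate the continuous integrand across a window. The transverse geometry of case (C), which forces $R(u,\al)$ to behave linearly in $u-\CA(\al)$, is precisely what makes this direct estimate possible and distinguishes case (C) from the more delicate tangential situations of cases (A) and (B).
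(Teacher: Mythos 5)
Your proof is correct and follows essentially the same route as the paper: the paper's own (terse) argument is exactly to combine the uniform approximation $\e A_0(\al,\e)=\e^{-1}g^{(1)}_{0,\e}(\al)\cdot\e+O(\e^{3/2})$ on $\Omega$ (i.e.\ \eqref{A-simpl v2} with $m=0$, which is your ``continuous pointwise error'' bound) with the total-variation estimate \eqref{rem2 TV bnd} summed over $O(\e^{-1/2})$ windows, arguing as in Lemmas~\ref{lem:ineq II} and \ref{lem:ineq II B}. One small imprecision: for the windows with $\al_k\notin\Omega$, the bound $|A_0|=O(1)$ by itself only yields $O(1)$ after summing over $O(1/\e)$ windows; you need the continuous estimate $A_0(\al,\e)=O(\e)$ for $\al\in[-\pi/2,\pi/2]\setminus\Omega$ (established in the proof of Lemma~\ref{lem:rem2 A-to-g2}), plus an $O(\e)$ allowance for the finitely many windows straddling $\partial\Omega$.
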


The proofs of lemmas~\ref{lem:rem2 A-to-g2}, \ref{lem:rem2 sum g2} and \ref{lem:rem2 sum g3} are in  appendix~\ref{sec:caseC proofs}. Combining these three lemmas proves Lemma~\ref{lem:main-res-BC} in case (C).

\appendix
\section{Proof of the Lemmas~\ref{lem:sum1 est} and \ref{lem:int1 est}}\label{sec:prf sum_int}

\subsection{Proof of Lemma~\ref{lem:sum1 est}}\label{ssec:prf sum1 est}

Clearly, the sum over $\al_k$ on the left in \eqref{sum est} does not change if $\phi$ is replaced by $\fks$. Rewrite the top line in \eqref{sum est} as follows
\be\label{diff v1}\begin{split}
R(\e):=\frac1{\Delta\al}\sum_{\al_k\in\oks} \int_{|\al-\al_k|\le \Delta\al/2}\bigl[&g(\al)h_l(\al) e({\fks(\al)}/\e)
-g(\al_k) e(\fks(\al_k)/\e)\bigr]\dd\al.
\end{split}
\ee
The expression in brackets can be written as follows
\be\label{diff v3}\begin{split}
&[g(\al)-g(\al_k)]h_l(\al) e(\fks(\al)/\e)+g(\al_k)\bigl[h_l(\al) e(\fks(\al)/\e)-e(\fks(\al_k)/\e)\bigr].
\end{split}
\ee

By construction, $1\le h_l(\al)\le \pi/2$, $\al\in\oks$. Therefore,
\be\label{diff v2}\begin{split}
|R(\e)|\le& \frac\pi2R^{(1)}(\e)+ \sum_{\al_k\in\oks}|g(\al_k)| |R_k^{(2)}(\e)|,\\
R^{(1)}(\e):=&\frac1{\Delta\al}\sum_{\al_k\in\oks} \int_{|\al-\al_k|\le \Delta\al/2}\bigl|g(\al)-g(\al_k)\bigr|\dd \al,\\ 
R_k^{(2)}(\e):=&\frac1{\Delta\al}\int_{|\al-\al_k|\le \Delta\al/2}\bigl[h_l(\al) e([\fks(\al)-\fks(\al_k)]/\e)-1\bigr]\dd\al.
\end{split}
\ee
The term $R^{(1)}(\e)$ is estimated as follows: 
\be\label{sum c2}\begin{split}
& R^{(1)}(\e) \le 
\sum_{\al_k\in\oks} |g(\bar\al_k)-g(\al_k)| \le \TV(g,I),\\
& \bar \al_k:=\text{argmax}_{\al:|\al-\al_k|\le \Delta\al/2}|g(\al)-g(\al_k)|.
\end{split}
\ee

Consider next the second term on the first line in \eqref{diff v2}. Changing variables $t=(\al-\al_k)/\Delta\al$ and expanding $\fks$ in the exponent gives
\be\label{Jn est1}\begin{split}
R_k^{(2)}(\e)=&\int_{|t|\le 1/2}h_l(\al_k+t\Delta\al) e\bigl(\kappa  \fks’(\al_k)t+O(\e \phi_{\text{mx}}^{\prime\prime})\bigr)\dd t-1\\
=& \biggl[h_l(\al_k) \int_{|t|\le 1/2}e(\kappa  \fks’(\al_k)t)\dd t-1\biggr] +O(\e \phi_{\text{mx}}^{\prime\prime})\\
&+\int_{{|t|\le 1/2}}(h_l(\al_k+t\Delta\al)-h_l(\al_k))O(1)\dd t.
\end{split}
\ee
Since the expression in brackets is zero due to the choice of $h_l$, simple estimates show
$|R_k^{(2)}(\e)|\le  c\e \phi_{\text{mx}}^{\prime\prime}$.
Here we have used that $\max_{\al\in\oks}|\kappa\fks’(\al)|\le 1/2$ (cf. \eqref{Omega_eps} and \eqref{h-fn}) and
\be\label{del h est1}\begin{split}
\max_{|t|\le 1/2}|h_l(\al_k+t\Delta\al)-h_l(\al_k)|\le & c\e\max_{\al\in\oks}|h_l’(\al)|
\le c\e \phi_{\text{mx}}^{\prime\prime}.
\end{split}
\ee
Hence
\be\label{diff v2 bnd}\begin{split}
\sum_{\al_k\in\oks}|g(\al_k)| |R_k^{(2)}(\e)|
&\le c \phi_{\text{mx}}^{\prime\prime} \e \sum_{\al_k\in\oks}|g(\al_k)|\\
&\le c \phi_{\text{mx}}^{\prime\prime} \biggl(\int_I |g(\al)|\dd\al+\e\TV(|g|,I)\biggr).
\end{split}
\ee
Clearly, $\TV(|g|,I)\le \TV(g,I)$. Combining the results proves the lemma.

\subsection{Proof of Lemma~\ref{lem:int1 est}}\label{lem:int1 est prf}
Without loss of generality, we may assume that $\al_l^*$ is an endpoint of $I_l$. Otherwise, we can write  $I_l=[\al_l^*-\de’,\al_l^*]\cup[\al_l^*,\al_l^*+\de]$ for some $\de,\de’>0$ and consider each of the two intervals separately. Here we estimate the contribution of the second interval. The first interval can be considered completely analogously. Note that $\al_l^*$ can be negative and it may happen that $\al_l^*\le 0\le \al_l^*+\de$. Consider two subintervals 
\be\label{aux ints}
U_1:=[\al_l^*,\al_l^*+\e^{1/2}],\ U_2:=[\al_l^*+\e^{1/2},\al_l^*+\de],\ U_1\cup U_2=[\al_l^*,\al_l^*+\de],
\ee
and denote 
\be\label{Jn def first}
J_n:=\frac1{\Delta\al}\int_{U_n} (g_{\e} h_l)(\al) e\biggl(\frac{\phi_l(\al)}\e\biggr)\dd \al,\ n=1,2.
\ee
Using \eqref{g_e ass 1} and that $\al_l^*+\e^{1/2}\asymp\al_l^*$ (which follows from \eqref{ang0 bnd} below and $\bt>\eta_0+2$), direct estimation implies
\be\label{sum est 4}
J_1\le c\rho(m)/|\tilde p|,\quad \tilde p:=\e^{-1/2}\al_l^*.
\ee
A slightly more accurate estimate, which decays faster as $\e\to 0$, can be obtained by integrating by parts (differently than in \eqref{sum est 1 v2} below). However, we do not need it here.

Integration by parts yields
\be\label{sum est 1 v2}\begin{split}
&J_2
=\pi[J_{2,1}-J_{2,2}],\
J_{2,1}:=\left. \frac{g_\e(\al)}{\sin(\pi\kappa  \phi_l’(\al))}\right|_{\al_l^*+\e^{1/2}}^{\al_l^*+\de},\\
&J_{2,2}:=\int_{U_2}\pa_\al\frac{g_\e(\al)}{\sin(\pi\kappa  \phi_l’(\al))}e\biggl(\frac{\phi_l(\al)}\e\biggr)\dd\al,
\end{split}
\ee
where we have used the definition of $h_l$ in \eqref{h-fn}. Obviously, 
\be\label{sin bnd}
|\phi_l’(\al)|\ge c\phi_{\text{mn}}^{\prime\prime}|\al-\al_l^*|,\ \al\in[\al_l^*,\al_l^*+\de].
\ee
Recall that here $\phi_{\text{mn}}^{\prime\prime}=\phi_{\text{mn}}^{\prime\prime}(I_l)$. By the definition of the interval $I_l$, $\kappa\phi_l’(I_l)\subset[-1/2,1/2]$, so $\sin(\pi\kappa  \phi_l’(\al))\asymp \kappa  \phi_l’(\al)$ on $I_l$. 
Then, by \eqref{g_e ass 1} and \eqref{sin bnd}, 
\be\label{int term v2}\begin{split}
|J_{2,1}|\le c\rho(m)\e^{1/2}\biggl[\frac{1}{1+\e^{-1/2}|\al_l^*+\e^{1/2}|}\frac1{\phi_{\text{mn}}^{\prime\prime}\e^{1/2}}&\\
+\frac{1}{1+\e^{-1/2}|\al_l^*+\de|}\frac1{\phi_{\text{mn}}^{\prime\prime}\de}&\biggr]
\le c \frac{\rho(m)}{\phi_{\text{mn}}^{\prime\prime}}\frac1{|\tilde p|}.
\end{split}
\ee
We have again used here that $\al_l^*+\e^{1/2}\asymp\al_l^*$ and
\be
|\al_l^*|\le (1+\e^{-1/2}|\al_l^*+\de|)\de\quad \text{if}\quad \de>\e^{1/2}.
\ee
Otherwise, if $\de\le\e^{1/2}$, then $J_2=0$ and only $J_1$ remains.
Further,
\be\label{sum est 1 v3}\begin{split}
|J_{2,2}|\le &\int_{U_2}\biggl|\pa_\al\frac{g_\e(\al)}{\sin(\pi\kappa  \phi_l’(\al))}\biggr|\dd\al\\
\le & c\sum_{n=2}^N\biggl[\max_{\al\in U_{2,n}}\biggl(\frac1{|\phi_l’(\al)|}\biggr)\TV(g_\e,U_{2,n})+
\int_{U_2} |g_\e(\al)|\biggl|\biggl(\frac1{\phi_l’(\al)}\biggr)’\biggr|\dd\al\biggr],\\
U_{2,n}:=&\al_l^*+\e^{1/2}[n-1,n],\ n=2,\dots,N-1,\\ 
U_{2,N}:=&\al_l^*+[\e^{1/2}(N-1),\de],
\end{split}
\ee
where $N=\lceil\de/\e^{1/2}\rceil$. By \eqref{g_e ass 1}, \eqref{g_e ass 2} and \eqref{sin bnd}, 
\be\label{integral br}\begin{split}
|J_{2,2}|\le &c\rho(m)\biggl[\frac{1}{\phi_{\text{mn}}^{\prime\prime}}J_{2,2}^a+\frac{\phi_{\text{mx}}^{\prime\prime}}{(\phi_{\text{mn}}^{\prime\prime})^2}J_{2,2}^b \biggr],\\
J_{2,2}^a:=&\sum_{n=2}^N\frac{1}{\e^{1/2}(n-1)}\frac{\e^{1/2}}{1+\e^{-1/2}|\al_l^*+\e^{1/2}(n-1)|},\\
J_{2,2}^b:=&\int_{U_2}\frac{\e^{1/2}}{1+\e^{-1/2}|\al|}\frac{1}{(\al-\al_l^*)^2}\dd\al,
\end{split}
\ee
where $\phi_{\text{mx}}^{\prime\prime}=\phi_{\text{mx}}^{\prime\prime}(I_l)$ and $\phi_{\text{mn}}^{\prime\prime}=\phi_{\text{mn}}^{\prime\prime}(I_l)$. After some tedious calculations (see section~\ref{ssec:J22ab}):
\be\label{J22ab}
J_{2,2}^a \le c\ln|\tilde p|/|\tilde p|,\ J_{2,2}^b \le c/|\tilde p|.
\ee
Recall that $|\tilde p|>1$. Combining \eqref{sum est 4}, \eqref{sum est 1 v2}, \eqref{int term v2}, \eqref{integral br} and \eqref{J22ab} proves the lemma.

\section{Proof of Lemma~\ref{lem:apply lemma 1}}\label{sec:prf appl lem1}

Let $\Omega$ be a small neighborhood of $\CA([-a,a])$. First, we state and prove a useful lemma.

\begin{lemma}\label{aux lem sth}
Consider the function
\be\label{s_vs_theta}
s(\theta,\al):=\frac{R(\theta,\al)}\theta=\vec\al\cdot\frac{y(\theta)-y(0)}{\theta},\
(\theta,\al)\in\CD.
\ee
Under assumptions~\ref{ass:S_A} one has:
\begin{enumerate}
\item $s(\theta,\al)\asymp\CA(\theta)-\al$, $|\theta|\le a$, $\al\in\Omega$;
\item $s\in C^1(\CD)$, $\pa_\theta s(\theta,\al)\asymp 1$, and $\pa_\al s(\theta,\al)\asymp -1$, $|\theta|\le a$, $\al\in\Omega$;
\item The equation $s=s(\theta,\al)$ determines a function $\theta(s,\al)$ for all $|s|$ sufficiently small and $\al\in\Omega$; and
\item $\pa_s\theta(s,\al),\pa_\al\theta(s,\al)\asymp 1$ and $|\pa_\al\pa_s \theta(s,\al)|\le c$ when $|s|$ sufficiently small and $\al\in\Omega$.
\end{enumerate}
\end{lemma}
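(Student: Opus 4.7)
The plan is to express $s$ via Hadamard's lemma (which exposes smoothness), compute its partial derivatives at the base point $(\theta,\al)=(0,0)$ using the geometric conventions in assumption~\ref{ass:S_A}(2), and then invoke continuity and the implicit function theorem.

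First I would write
$$s(\theta,\al)=\vec\al\cdot\int_0^1 y'(r\theta)\,\dd r,$$
which lies in $C^3(\CD)$ because $y\in C^4$; this already delivers the regularity assertion in (2). Next I would unpack assumption~\ref{ass:S_A}(2). Since $\{\vec\theta,\vec\theta^\perp\}$ is orthonormal and $\vec\theta^\perp\cdot y'(\theta)\equiv-|y'(\theta)|$, it follows that $y'(\theta)=-|y'(\theta)|\vec\theta^\perp$, so $\vec\theta\cdot y'(\theta)\equiv 0$. Differentiating this last identity in $\theta$ yields $\vec\theta\cdot y''(\theta)=-\vec\theta^\perp\cdot y'(\theta)=|y'(\theta)|>0$. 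Specializing at $\theta=0$, where $\vec\theta=(1,0)$ and $\vec\theta^\perp=(0,1)$, one records $y'(0)=(0,-|y'(0)|)$ and $y_1''(0)=|y'(0)|$. Differentiating $s$ under the integral and evaluating at $(0,0)$ then gives
$$\pa_\al s(0,0)=\vec\al^\perp\big|_{\al=0}\cdot y'(0)=-|y'(0)|,\qquad \pa_\theta s(0,0)=\vec\al\big|_{\al=0}\cdot\tfrac12 y''(0)=\tfrac12|y'(0)|,$$
both nonzero with the right signs. By continuity of $\pa_\al s$ and $\pa_\theta s$, and by shrinking $a$ (allowed by assumption~\ref{ass:S_A}(1)) and the neighborhood $\Omega$, one obtains $\pa_\theta s\asymp 1$ and $\pa_\al s\asymp-1$ on $[-a,a]\times\Omega$, completing part~(2).

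Part~(1) then falls out of the mean value theorem: by definition of $\CA$ we have $R(\theta,\CA(\theta))=0$, so $s(\theta,\CA(\theta))=0$, and hence $s(\theta,\al)=\pa_\al s(\theta,\tilde\al)(\al-\CA(\theta))$ for some $\tilde\al$ between $\al$ and $\CA(\theta)$. The bound $\pa_\al s\asymp-1$ immediately converts this into $s\asymp\CA(\theta)-\al$.

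For parts~(3) and (4), the uniform lower bound $\pa_\theta s\asymp 1$ makes $\theta\mapsto s(\theta,\al)$ strictly monotone on $[-a,a]$ for every $\al\in\Omega$, so the equation $s=s(\theta,\al)$ has a unique solution $\theta=\theta(s,\al)$ whenever $|s|$ is sufficiently small, uniformly in $\al\in\Omega$; the implicit function theorem makes $\theta\in C^2$ (since $s\in C^3$). The identities $\pa_s\theta=1/\pa_\theta s$ and $\pa_\al\theta=-\pa_\al s/\pa_\theta s$ combined with the asymptotics from (2) give $\pa_s\theta,\pa_\al\theta\asymp 1$, while $\pa_\al\pa_s\theta$, being continuous on the compact closure of its domain, is automatically bounded. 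The only delicate ingredient in the whole argument is the sign bookkeeping that pins down $y'(0)$ and $y_1''(0)$ from the orientation convention in assumption~\ref{ass:S_A}(2); once these are in hand, everything reduces to the implicit function theorem applied to a smooth function with a uniformly nondegenerate Jacobian.
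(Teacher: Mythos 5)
Your proof is correct and follows essentially the same route as the paper: both reduce everything to the signs and sizes of $\pa_\theta s$ and $\pa_\al s$ at the base point $(\theta,\al)=(0,0)$ (via the representation of $(y(\theta)-y(0))/\theta$ as a smooth vector function with values $y'(0)$ and $y''(0)/2$ at $\theta=0$), extend by continuity after shrinking $a$ and $\Omega$, and finish with the implicit function theorem. The only cosmetic differences are that the paper obtains part (1) from the explicit identity $s(\theta,\al)=|z(\theta)|\frac{\sin(\CA(\theta)-\al)}{\CA(\theta)-\al}(\CA(\theta)-\al)$ rather than your mean value theorem argument (both rest on $s(\theta,\CA(\theta))=0$), and it bounds $\pa_\al\pa_s\theta$ by differentiating the identity $s\equiv s(\theta(s,\al),\al)$ twice rather than by your compactness appeal; neither variation affects validity.
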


\begin{proof} Define $z(\theta):=(y(\theta)-y(0))/\theta$. By assumption~\ref{ass:f}(1), $z(\theta)$ is a nonzero, $C^3([-a,a])$ vector function satisfying
\be\label{zth props}
z(0)=y’(0),\ z’(0)=y^{\prime\prime}(0)/2.
\ee
As is easily seen,
\be\label{s_vs_theta alt}
s(\theta,\al)=|z(\theta)|\frac{\sin(\CA(\theta)-\al)}{\CA(\theta)-\al}(\CA(\theta)-\al),\ 
|\theta|\le a,\al\in\Omega.
\ee
This proves statement (1) and that $s\in C^1(\CD)$. By \eqref{s_vs_theta}, \eqref{zth props} and assumption~\ref{ass:S_A}(2), 
\be\begin{split}
&\pa_\theta s(\theta,\al)|_{\theta=0}=\vec\al\cdot y^{\prime\prime}(0)/2=|y’(0)|/2>0,\\
&\pa_\al s(0,\al)=\vec\al^\perp\cdot y^{\prime}(0)=-|y’(0)|<0.
\end{split}
\ee
Combined with assumption~\ref{ass:S_A}(1) this proves the rest of statement (2), statement (3) and that $\pa_s\theta(s,\al)\asymp 1$. The rest of statement (4) is proven by differentiating the identity $s\equiv s(\theta(s,\al),\al)$ with respect to $\al$ and then $s$ and using statement (2).
\end{proof}

We now return to the proof of Lemma~\ref{lem:apply lemma 1}. 
\subsection{Estimate $g_{m,\e}(\al)$}
From Lemmas~\ref{lem:aux props}, \ref{lem:first int} and \eqref{Am small g},
\begin{align}\label{eA 1st}
&|g_{m,\e}(\al)|\le c\rho(m)\e^{1/2}(1+\e^{-1/2}|\al|)^{-1},\ \al\in I,\\
\label{eA 2nd}
&\int_{I}|g_{m,\e}(\al)|\dd\al\le c\rho(m)\int_{I}\frac{\e^{1/2}}{1+\e^{-1/2}|\al|}\dd\al\le c\rho(m)\e\ln(1/\e).
\end{align}
In addition, from \eqref{recon-ker-v2} and the differentiability of $\tilde\psi_m$ it follows that $g_{m,\e}\in C^1([-\pi/2,\pi/2])$.

\subsection{Split $g_{m,\e}=g_{m,\e}^{(1)}+g_{m,\e}^{(2)}$ and estimate $\TV(g_{m,\e}^{(1)},I)$}\label{ssec:step2}
Define two sets 
\be\label{three Theta sets}\begin{split}
&\Xi_1(\al,\e):=\{|\theta|\le a:\,|s(\theta,\al)|\le\e^{1/2}\},\
\Xi_2(\al,\e):=[-a,a]\setminus\Xi_1(\al,\e),
\end{split}
\ee
and the associated functions (cf. \eqref{A simpl orig} and \eqref{Am small g})
\be\label{three As}
\begin{split}
g_{m,\e}^{(n)}(\al):=e\left(-m \vec\al\cdot \check x\right)\int_{\Xi_n(\al,\e)}\int_0^{H_0(\e^{-1/2}\theta)}&\tilde\psi_m\left(\e^{-1}R(\theta,\al)+h(\hat t,\theta,\al)\right)\\
&\times F(\theta,\e \hat t)\dd\hat t\dd\theta,\ n=1,2.
\end{split}
\ee
Clearly, $g_{m,\e}\equiv g_{m,\e}^{(1)}+g_{m,\e}^{(2)}$. If $\Xi_1(\al,\e)=\varnothing$ for some $\al$, we set $g_{m,\e}^{(1)}(\al)=0$. Therefore, in this subsection we consider only $\al$ for which $\Xi_1(\al,\e)\not=\varnothing$.

When $\e>0$ is small and $\theta\in\Xi_1(\al,\e)$, $s(\theta,\al)$ is close to zero and $\al$ is close to $\CA(\theta)$, i.e. $\al\in\Omega$. This means that $\Xi_1(\al,\e)\not=\varnothing$ implies $\al\in\Omega$ and  Lemma~\ref{aux lem sth} applies. Also, $\TV(g_{m,\e}^{(1)},I)=\TV(g_{m,\e}^{(1)},\Omega)$.

Next we estimate $\TV(g_{m,\e}^{(1)},\Omega)$. Changing variable $\theta\to s=R(\theta,\al)/\theta$, which is possible by statement (4) of Lemma~\ref{aux lem sth}, gives
\be\label{small g v2}\begin{split}
g_{m,\e}^{(1)}(\al)=e\left(-m \vec\al\cdot \check x\right)&\int_{|s|\le\e^{1/2}}\int_0^{H_0(\e^{-1/2}\theta)}  \tilde\psi_m\biggl(\frac{\theta s}\e +h(\hat t,\theta,\al)\biggr),\\
&\times F(\theta,\e \hat t)\dd\hat t\,\pa_s\theta(s,\al)\dd s,\quad \theta=\theta(s,\al).
\end{split}
\ee
Recall that $\pa_s\theta(s,\al)>0$ by statement (3) of Lemma~\ref{aux lem sth}.
Consider an interval $U\subset \Omega$ of length $\e^{1/2}$, which may depend on $\e$. Represent $g_{m,\e}^{(1)}$ in the form
\be\label{small g v1 alt}\begin{split}
g_{m,\e}^{(1)}(\al)=&\int_{|s|\le\e^{1/2}}\int_0^{\mu(\al,s)} G(\al,s,\hat t,\e)\dd\hat t \dd s,\\
G(\al,s,\hat t,\e):=&e\left(-m \vec\al\cdot \check x\right)\tilde\psi_m\biggl(\frac{\theta s}\e +h(\hat t,\theta,\al)\biggr)
F(\theta,\e \hat t)\,\pa_s\theta(s,\al),\\
\mu(\al,s):=&H_0(\e^{-1/2}\theta),\quad \theta=\theta(s,\al),\ \al\in U.
\end{split}
\ee
The dependence of some functions on $\e$ is suppressed for simplicity. Clearly,
\be\label{g findif}\begin{split}
&g_{m,\e}^{(1)}(\al_2’)-g_{m,\e}^{(1)}(\al_1’)=J_1(\al_1’,\al_2’)+J_2(\al_1’,\al_2’),\ \al_1’,\al_2’\in U,\\
&J_1(\al_1’,\al_2’):=\int_{|s|\le\e^{1/2}}\int_{\mu(\al_1’,s)}^{\mu(\al_2’,s)} G(\al_2’,s,\hat t,\e)\dd\hat t \dd s,\\
&J_2(\al_1’,\al_2’):=\int_{|s|\le\e^{1/2}}\int_0^{\mu(\al_1’,s)} (G(\al_2’,s,\hat t,\e)-G(\al_1’,s,\hat t,\e))\dd\hat t \dd s.
\end{split}
\ee
We use primes in $\al_k’$, $k=1,2$ (and for other values of $k$ below), in order not to confuse these auxiliary angles with the data points $\al_k$ in \eqref{data_eps}. Then
\be\label{J1 est}\begin{split}
&|J_1(\al_1’,\al_2’)|\le \int_{|s|\le\e^{1/2}}\max_{|\hat t|\le c}|G(\al_2’,s,\hat t,\e)| |H_0(\tilde\theta_2’)-H_0(\tilde\theta_1’)|\dd s,\\ 
&\tilde\theta_k:=\e^{-1/2}\theta(s,\al_k’),\ k=1,2,\ 
|\tilde\theta_2’-\tilde\theta_1’|\le c\e^{-1/2}|\al_2’-\al_1’|,
\end{split}
\ee
and
\be\label{J2 est}\begin{split}
|J_2(\al_1’,\al_2’)|&\le \int_{|s|\le\e^{1/2}}\int_0^{\mu(\al_1’,s)}\int_{\al_1’}^{\al_2’} |\pa_\al G(\al,s,\hat t,\e)|\dd\al\dd\hat t \dd s\\
&\le c\int_{|s|\le\e^{1/2}}\int_{\al_1’}^{\al_2’} \max_{|\hat t|\le c}|\pa_\al G(\al,s,\hat t,\e)|\dd\al \dd s.
\end{split}
\ee
Pick any $K>0$ and consider a partition of $U$ 
\be\label{partition}
\al_0’<\al_1’<\cdots<\al_K’,\ [\al_0’,\al_K’]=U.
\ee
Set $\tilde U:=\e^{-1/2}\theta(s,U)$, apply \eqref{J1 est} to each pair $\al_{k-1}’,\al_k’$ and add the results:
\be\label{H0 rel est}\begin{split}
\sum_{k=1}^K |J_1(\al_{k-1}’,\al_k’)|&\le
c \rho(m)\int_{|s|\le\e^{1/2}}\sum_{k=1}^K \frac{|H_0(\tilde\theta_k’)-H_0(\tilde\theta_{k-1}’)|}{1+(\theta(s,\al)s/\e)^2}\dd s\\
&\le
c \rho(m)\int_{|s|\le\e^{1/2}}\frac{\TV(H_0,\tilde U)}{1+(\theta(s,\al)s/\e)^2}\dd s,
\end{split}
\ee
where $\al\in U$ is any point. Indeed, from \eqref{four-coef-bnd} and the fact that $h(\hat t,\theta,\al)$ is uniformly bounded, we have
\be\label{aux ge bnd}\begin{split}
&\biggl|\tilde\psi_m\biggl(\frac{\theta s}\e +h(\hat t,\theta,\al)\biggr)\biggr|
\le c\frac{\rho(m)}{1+(\theta s/\e)^2},\\ 
&|\hat t|\le c,\ \theta=\theta(s,\al),\ |s|\le\e^{1/2},\ \al\in U.
\end{split}
\ee
The remaining factors that make up $G$ (see \eqref{small g v1 alt}) are all uniformly bounded. By  Lemma~\ref{lem:aux props} and statement (1) of Lemma~\ref{aux lem sth}
it immediately follows that 
\be\begin{split}
1+(\theta(s,\al) s/\e)^2\asymp & 1+((s+\al) s/\e)^2\asymp 1+((s+\al’) s/\e)^2,\\ 
|s|\le&\e^{1/2},\ \al,\al’\in U,\ |U|=\e^{1/2}.
\end{split}
\ee

By construction, $|\tilde U|\asymp 1$ uniformly in $s$. 
Hence, by assumption~\ref{ass:H0}(2), $\TV(H_0,\tilde U)<c$ and, by Lemma~\ref{lem:first int} (where we replace $\theta$ with $s$ and $\al$ with $-\al$),
\be\label{H0 rel est st2}\begin{split}
\sum_{k=1}^K |J_1(\al_{k-1}’,\al_k’)|&\le
c \rho(m)\int_{|s|\le\e^{1/2}}[1+((s+\al)s/\e)^2]^{-1}\dd s\\
&\le c \rho(m)\e^{1/2}(1+\e^{-1/2}|\al|)^{-1},\ \forall\al\in U.
\end{split}
\ee

Consider now \eqref{J2 est}. Adding all the $J_2$ terms analogously to \eqref{H0 rel est} gives
\be\label{sum J2}\begin{split}
\sum_{k=1}^K |J_2(\al_{k-1}’,\al_k’)|&\le
c\int_U\int_{|s|\le\e^{1/2}}\max_{|\hat t|\le c}|\pa_\al G(\al,s,\hat t,\e)|\dd s\dd\al .
\end{split}
\ee
Computing the derivative $\pa_\al G$ gives the sum of several terms, most of which are estimated as in \eqref{eA 1st}. Exceptional cases arise when $\theta s/\e$ and the exponential factor are differentiated with respect to $\al$ (see \eqref{small g v1 alt}). 

When the exponential factor is differentiated, an extra factor $m$ appears. By Lemma~\ref{lem:first int} the resulting integral 
is bounded by
\be\label{exp der bnd}
c\rho(m)|m|\e^{1/2}(1+\e^{-1/2}|\alpha|)^{-1}.
\ee
When $\theta(s,\al) s/\e$ is differentiated, an extra factor $s/\e$ appears in the integrand. Due to statement (4) of Lemma~\ref{aux lem sth}, the corresponding model integral satisfies (see subsection~\ref{ssec: two sep ints} for the proof)
\be\label{model int add 1}\begin{split}
\int_{|s|\le\e^{1/2}}\frac{|s|/\e}{1+(s(\al+s)/\e)^2}\dd s=O(\tal^{-2}\ln|\tal|),\ \tal:=\e^{-1/2}\al\to\infty.
\end{split}
\ee

Combine \eqref{H0 rel est st2}--\eqref{model int add 1} to estimate the total variation
\be\label{var est (1)}\begin{split}
&\TV(g_{m,\e}^{(1)},U)\\
&\le c\rho(m)\biggl[\e^{1/2}\int_{\e^{-1/2}U}\biggl(\frac{|m|\e^{1/2}}{1+|\tal|}+\frac{\ln(2+|\tal|)}{1+\tal^{2}}\biggr)\dd\tal+\frac{\e^{1/2}}{1+\e^{-1/2}|\al|}\biggr]\\
&\le c\rho(m)\e^{1/2}(1+\e^{-1/2}|\al|)^{-1}\ \forall\al\in U.
\end{split}
\ee
Here we have used that (a) $|U|=\e^{1/2}$ implies $1+\e^{-1/2}|\al|\asymp 1+\e^{-1/2}|\al’|$ for any $\al,\al’\in U$ and (b) $|m|\le\e^{-\ga}$ implies $|m|\e^{1/2}\le c$. An extra factor $\e^{1/2}$ in front of the integral in \eqref{var est (1)} appears because the integral to evaluate $\TV(g_{m,\e}^{(1)},U)$ is with respect to $\al$ rather than $\tal=\e^{-1/2}\al$. 

Represent $\Omega$ as the union of $O(\e^{-1/2})$ intervals $U_n=\e^{1/2}[n,n+1]$. The first and last $U_n$ are truncated: $U_n\to U_n\cap\Omega$. Adding the estimates \eqref{var est (1)} with $\al=\e^{1/2}n$ gives
\be\label{var full int}
\TV(g_{m,\e}^{(1)},I)=\TV(g_{m,\e}^{(1)},\Omega)= \sum_{n:U_n\subset \Omega}\TV(g_{m,\e}^{(1)},U_n)\le c\rho(m)\e^{1/2}\ln(1/\e). 
\ee

\begin{remark} Estimate \eqref{var est (1)} is a key result in the proof of Lemma~\ref{lem:main-res-A} where we need the regularity of $H_0$ (assumption~\ref{ass:H0}(2)).
\end{remark}

\subsection{Estimate $\TV(g_{m,\e}^{(2)},I)$} A model integral gives
\be\label{model der int}\begin{split}
\biggl(\int_{-\infty}^{\al-\e^{1/2}}+\int_{\al+\e^{1/2}}^\infty\biggr)\frac{|\theta/\e|}{1+(\theta(\theta-\al)/\e)^2}\dd\theta\le c(1+\e^{-1/2}|\alpha|)^{-1},\ \al\in\br.
\end{split}
\ee
See appendix~\ref{ssec: two sep ints} for the proof. Here we use that 
\be\label{Xi1 Om}
\{\theta\in[-a,a]:|\CA(\theta)-\al|\le c\e^{1/2}\}\subset \Xi_1(\al,\e)
\ee
for some $c>0$ and then model $\CA(\theta)-\al$ with $\theta-\al$ when estimating integrals. 
Differentiating \eqref{three As} with $n=2$ with respect to $\al$ gives 
\be
\begin{split}\label{der g2 bnds}
(g_{m,\e}^{(2)}(\al))’&\le c\rho(m)\bigg[\frac{|m|\e^{1/2}}{1+\e^{-1/2}|\al|}+\frac{1}{1+\e^{-1/2}|\al|}+\frac{1}{(1+\e^{-1/2}|\al|)^2}\bigg]\\
&\le c\rho(m)(1+\e^{-1/2}|\al|)^{-1}.\end{split}
\ee
We used here that $|m|\le\e^{-\ga}$ implies $|m|\e^{1/2}=O(1)$. The first term in brackets is obtained by differentiating the exponential factor in front of the double integral (which brings out a factor $m$) and using \eqref{model int 0}. The second term in brackets is obtained by differentiating $R(\theta,\al)/\e$, where $R(\theta,\al)=s(\theta,\al)\theta$, in the argument of $\tilde\psi_m$. This brings out a factor $s/\e$ and then we use statement (2) of Lemma~\ref{aux lem sth} and \eqref{model der int}. Differentiating $h(\hat t,\theta,\al)$ in the argument of $\tilde\psi_m$ yields a lower order term.

Additionally, $\pa_\al s(\theta,\al)\asymp 1$ and the values of $\theta$ which solve $s(\theta,\al)=\pm\e^{1/2}$ satisfy $|\CA(\theta)-\al|\asymp \e^{1/2}$. Hence, such $\theta$ satisfy $\theta\in \CA^{-1}([\al-c\e^{1/2},\al+c\e^{1/2}])$. Combining this with Lemmas~\ref{lem:psi psider} and \ref{lem:aux props} gives the second term in brackets in \eqref{der g2 bnds}. 

Finally, \eqref{der g2 bnds} yields
\be
\label{g phi bnds 1}
\TV(g_{m,\e}^{(2)},I)\le c\rho(m)\int_{I}(1+\e^{-1/2}|\al|)^{-1}\dd\al\le c\rho(m)\e^{1/2}\ln(1/\e).
\ee

Note that only assumption~\ref{ass:H0}(1) is required for \eqref{der g2 bnds} to hold, assumption~\ref{ass:H0}(2) is not necessary.

\subsection{End of proof of Lemma~\ref{lem:apply lemma 1}}
With $\phi$ given in \eqref{Am small g}, we have $\phi_{\text{mx}}^{\prime\prime}=O(|m|)$. Using \eqref{eA 2nd} and adding \eqref{var full int} and \eqref{g phi bnds 1} yields
\be\label{main coef}\begin{split}
&\bigl(1+\e\phi_{\text{mx}}^{\prime\prime}\bigr)\TV(g_{m,\e},I)+\phi_{\text{mx}}^{\prime\prime}\int_{I}|g_{m,\e}(\al)|\dd\al\le c \rho(m)\e^{1/2}\ln(1/\e).
\end{split}
\ee
Here we have again used that $|m|\e^{1/2}=O(1)$. 

Finally, inequality \eqref{eA 1st} implies that $g_{m,\e}$ satisfies \eqref{g_e ass 1 v2}. 
From \eqref{var est (1)} and \eqref{der g2 bnds}, we see that $g_{m,\e}$ satisfies \eqref{g_e ass 2 v2}.

\section{Proof of Lemma~\ref{lem:exc cases}}\label{sec:exceptional}

Even though $\kappa\phi’(\al)$ may take the value $l_\star=\lfloor \kappa\phi’(\alst)\rfloor$ at two points (on either side of $\alst$, see Figure~\ref{fig:sine}), in this section we assume that $\al_{l_\star}^*$ is the smaller of the two (i.e., $\al_{l_\star}^*<\alst$).

The interval $I_{\star}\ni\alst$ is exceptional because the assumptions of Lemma~\ref{lem:int1 est} are not satisfied: $\phi_{\text{mn}}^{\prime\prime}(I_{\star})=0$. By \eqref{g_e ass 1 v2}, \eqref{g_e ass 2 v2},
\be\label{exc2 g_e ass}
|g_\e(\al)|\le c\rho(m)\e,\ \al\in I_\star\text{ and } 
\TV(g_\e,U)\le c\rho(m)\e 
\ee
for any interval $U\subset I_\star$ of length $|U|=O(\e^{1/2})$ because $|\al|\asymp 1$ if $\al\in I_\star$. 

Split $I_\star$ into two intervals : $I_\star^a:=[\al_{l_\star}^*,\al_\star]$ and $I_\star^b:=[\alst,2\alst-\al_{l_\star}^*]$. The two intervals are completely analogous, so we prove \eqref{exc II} by restricting the interior sum to $\al_k\in I_\star^a$. 

If $\alst-\al_{l_\star}^*\le \e^{1/2}$, direct estimation gives
\be
\label{exc II aux}
\sum_{\al_k\in I_\star^a} |g_{m,\e}(\al_k)|=O(\rho(m)\e^{1/2}),
\ee
and summation with respect to $m$ immediately proves \eqref{exc II} if $\bt>1$. Therefore, in what follows we assume $\alst-\al_{l_\star}^*>\e^{1/2}$.

If $\{\kappa \phi’(\alst)\}\le 1/2$, we use Lemma~\ref{lem:sum1 est} to reduce the sum with respect to $\al_k$ to an integral by using $l=l_\star$ in \eqref{h-fn}, \eqref{sum est}. If $\{\kappa \phi’(\alst)\}>1/2$, i.e. $\kappa \phi’(\alst)$ is closer to $l_\star+1$ than to $l_\star$, then we subdivide $I_\star^a$ into two intervals: $I_\star^a=[\al_{l_\star}^*,\al_{l_\star+1/2}^*]\cup[\al_{l_\star+1/2}^*,\al_\star]$, where $\al_{l_\star+1/2}^*$ satisfies $\kappa \phi’(\al_{l_\star+1/2}^*)=l_\star+1/2$. In this case, the sum with respect to $\al_k\in [\al_{l_\star}^*,\al_{l_\star+1/2}^*]$ is reduced to an integral over $[\al_{l_\star}^*,\al_{l_\star+1/2}^*]$ by using $l=l_\star$ in Lemma~\ref{lem:sum1 est} and the sum with respect to $\al_k\in [\al_{l_\star+1/2}^*,\al_\star]$ is reduced to an integral over $[\al_{l_\star+1/2}^*,\al_\star]$ by using $l=l_\star+1$. Note that Lemma~\ref{lem:sum1 est} applies regardless of whether there exists an $\al$ such that $\kappa\phi’(\al)=l$. In all cases, the magnitude of the error this replacement introduces is $O(\rho(m)\e^{1/2})$ for each $m$, and the sum over $m$ is $O(\e^{1/2})$ if \textcolor{black}{$\bt>1$.}

Set $\nu:=\al_{l_\star+1/2}^*$ if $\al_{l_\star+1/2}^*$ exists and $\nu:=\al_\star$ if $\al_{l_\star+1/2}^*$ does not exist.

\begin{lemma}\label{lem:exc cases alt1} Under the assumptions of Lemma~\ref{lem:exc cases}, one has
\be\begin{split}
\label{exc II alt1}
\sum_{1\le |m|\le\e^{-\ga}}&\biggl|\frac1{\Delta\al}\int_{\al_{l_\star}^*}^\nu (g_{m,\e} h_{l_\star})(\al) e\biggl(\frac{\phi_{l_\star}(\al)}\e\biggr)\dd \al\biggr|=O(\e^{1/2}\ln(1/\e))
\end{split}
\ee
if \textcolor{black}{$\bt>(\eta_\star+1)/2$.}
\end{lemma}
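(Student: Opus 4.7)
The plan is to treat a boundary layer of width $\e^{1/2}$ adjacent to $\al_{l_\star}^*$ directly, integrate by parts on the complement, and sum over $m$ using the Diophantine type condition on $\kappa|x_0|$. First I record the local structure of $\phi_{l_\star}'$ on $[\al_{l_\star}^*,\nu]\subset[\al_{l_\star}^*,\alst]$. With $\phi(\al)=-m\vec\al\cdot x_0$, the relation $\vec\al_\star\cdot x_0=0$ gives $\phi''(\alst)=0$ and $|\phi'''(\alst)|=|m||x_0|\asymp|m|$, so a Taylor expansion of $\phi'$ about $\alst$, using $d:=\alst-\al_{l_\star}^*$ and $v:=\al-\al_{l_\star}^*$, yields
\[
\phi_{l_\star}'(\al)=\phi'(\al)-\phi'(\al_{l_\star}^*)=-\tfrac{1}{2}\phi'''(\alst)\,v(2d-v)+O(|m|d^3),
\]
hence $|\phi_{l_\star}'(\al)|\asymp|m|vd$ for $\al\in[\al_{l_\star}^*,\nu]$, provided $\s$ is sufficiently short. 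In particular $|\phi_{l_\star}'(\alst)|\asymp|m|d^2$. On the other hand, in the present case $\{\kappa\phi'(\alst)\}\le 1/2$, one has $\kappa|\phi_{l_\star}'(\alst)|=\langle m\kappa|x_0|\rangle$, and the type-$\eta_\star$ condition on $\kappa|x_0|$ forces
\[
d\ge c_\epsilon\,|m|^{-(\eta_\star+1)/2-\epsilon}\quad\text{for any }\epsilon>0.
\]
The case $d\le\e^{1/2}$ is already handled by \eqref{exc II aux}, so I may assume $d>\e^{1/2}$.

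Split $[\al_{l_\star}^*,\nu]=[\al_{l_\star}^*,\al_{l_\star}^*+\e^{1/2}]\cup[\al_{l_\star}^*+\e^{1/2},\nu]$. On the boundary layer, the bounds $|g_{m,\e}|\le c\rho(m)\e$ and $|h_{l_\star}|\le \pi/2$ together with $1/\Delta\al=1/(\kappa\e)$ yield a contribution of size $O(\rho(m)\e^{1/2})$, whose sum over $1\le|m|\le\e^{-\ga}$ is $O(\e^{1/2})$ when $\bt>1$. On the complement, I integrate by parts using $h_{l_\star}/\phi_{l_\star}'=\pi\kappa/\sin(\pi\kappa\phi_{l_\star}')$, exactly as in the proof of Lemma~\ref{lem:int1 est}. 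The boundary term at $\al_{l_\star}^*+\e^{1/2}$ contributes at most $c\rho(m)\e^{1/2}/(|m|d)$, using $|\phi_{l_\star}'|\asymp|m|\e^{1/2}d$ at this endpoint; the boundary term at $\nu$ is no worse, and is strictly smaller if $\nu=\al_{l_\star+1/2}^*$ (where $\sin(\pi\kappa\phi_{l_\star}'(\nu))=1$).

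For the interior integral produced by the integration by parts, I decompose $\pa_\al(g_{m,\e}h_{l_\star}/\phi_{l_\star}')$ into the $g_{m,\e}'$ piece and the $\phi_{l_\star}''$ piece, and apply the partition scheme of Lemma~\ref{lem:int1 est}: cover $[\al_{l_\star}^*+\e^{1/2},\nu]$ by $N=\lceil d/\e^{1/2}\rceil$ subintervals $U_n$ of length $\e^{1/2}$ on which $|\phi_{l_\star}'|\asymp|m|\e^{1/2}nd$, and use $\TV(g_{m,\e},U_n)\le c\rho(m)\e$ together with $|\phi_{l_\star}''|=|\phi''|\le c|m|d$. Both pieces accumulate to $O(\rho(m)\e^{1/2}\ln(1/\e)/(|m|d))$, the log coming from $\sum_{n=1}^N 1/n$, and this dominates the boundary contributions. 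Summing over $m$ and inserting the lower bound on $d$,
\[
\sum_{1\le|m|\le\e^{-\ga}}\frac{\rho(m)\e^{1/2}\ln(1/\e)}{|m|d}\le c_\epsilon\,\e^{1/2}\ln(1/\e)\sum_{m\ge 1}m^{-\bt-1+(\eta_\star+1)/2+\epsilon},
\]
which converges for $\bt>(\eta_\star+1)/2+\epsilon$. Taking $\epsilon$ smaller than $\bt-(\eta_\star+1)/2$ closes the proof.

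The principal obstacle is the interior integral after integration by parts. Two things must be controlled carefully: (a) the lower bound $|\phi_{l_\star}'(\al)|\ge c|m|(\al-\al_{l_\star}^*)d$ on $[\al_{l_\star}^*+\e^{1/2},\nu]$, which requires $\s$ to be short enough that the cubic remainder in the Taylor expansion of $\phi'$ is dominated by the leading quadratic term in $v$ and $d$; and (b) the length-$\e^{1/2}$ TV partition, organized so that the harmonic sum $\sum_{n=1}^N 1/n$ absorbs into the target factor $\ln(1/\e)$. The Diophantine lower bound on $d$ coming from the type condition is precisely what converts the natural per-$m$ bound $\rho(m)/(|m|d)$ into a series converging at the exact threshold $\bt>(\eta_\star+1)/2$.
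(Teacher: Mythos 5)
Your proof is correct and follows essentially the same route as the paper's: a direct bound on the $\e^{1/2}$-layer at $\al_{l_\star}^*$, integration by parts on the rest with the length-$\e^{1/2}$ TV partition giving the harmonic-sum logarithm, the key lower bound $|\phi_{l_\star}'(\al)|\gtrsim |m|(\al-\al_{l_\star}^*)\,d$ with $|m|d^2\gtrsim\langle m\kappa|x_0|\rangle$ (the paper's \eqref{exc2 misc}), and the type-$\eta_\star$ condition to close the sum over $m$ at the threshold $\bt>(\eta_\star+1)/2$. The only cosmetic differences are that the paper derives $\phi_{l_\star}'\asymp|m|vd$ from the exact cosine identity \eqref{phi cos} rather than a Taylor remainder, and bounds the $\phi''$ piece of the interior integral via the sign-constancy of $\pa_\al(1/\sin(\pi\kappa\phi_{l_\star}'))$ rather than a pointwise $|\phi''|/|\phi'|^2$ estimate; both yield the same $O(\rho(m)\e^{1/2}/(|m|d))$.
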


\begin{lemma}\label{lem:exc cases alt2} Under the assumptions of Lemma~\ref{lem:exc cases}, one has
\be
\label{exc II alt2}
\sum_{1\le |m|\le\e^{-\ga}}\biggl|\frac1{\Delta\al}\int_{\al_{l_\star+1/2}^*}^{\alst} (g_{m,\e} h_{l_\star})(\al) e\biggl(\frac{\phi_{l_\star}(\al)}\e\biggr)\dd \al\biggr|=O(\e^{1/2}\ln(1/\e))
\ee
if \textcolor{black}{$\bt>(\eta_\star/2)+1$.} The integral in \eqref{exc II alt2} is assumed to be zero for those values of $m$ for which $\al_{l_\star+1/2}^*$ does not exist.
\end{lemma}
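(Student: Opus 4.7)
The integration region $[\al_{l_\star+1/2}^*,\alst]$ contains the degenerate endpoint $\alst$ where $\phi^{\prime\prime}(\alst)=0$, so Lemma~\ref{lem:int1 est} does not apply directly. My plan is to reduce the exponential sum (via Lemma~\ref{lem:sum1 est} with the appropriate integer $l$) to an oscillatory integral $J_m$ over this subinterval, estimate $|J_m|$ by combining a ``trivial'' bound with an integration-by-parts bound, and then control the final sum over $m$ using the type-$\eta_\star$ Diophantine condition on $\kappa|x_0|$.

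First I would set up the local geometry. On $[\al_{l_\star+1/2}^*,\alst]$ the quantity $\kappa\phi'(\al)$ lies in $[l_\star+1/2,\,l_\star+\{\kappa\phi'(\alst)\}]\subset(l_\star+1/2,\,l_\star+1)$, so in Lemma~\ref{lem:sum1 est} one should take $l=l_\star+1$. Define $\delta_m:=1-\{\kappa\phi'(\alst)\}=\langle m\kappa|x_0|\rangle$, so $|\kappa\phi'_{l_\star+1}(\alst)|=\delta_m$. Definition~\ref{def:gp}(3) yields $\delta_m\ge c\,m^{-\eta_\star-\e_1}$ for any $\e_1>0$. A quadratic Taylor expansion of $\kappa\phi'$ around $\alst$ (using $\phi^{\prime\prime}(\alst)=0$ and $\phi^{\prime\prime\prime}(\alst)\asymp m$) shows $|\alst-\al_{l_\star+1/2}^*|\asymp m^{-1/2}$.

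Next I would derive two complementary bounds on $|J_m|$. The \emph{trivial} bound $|J_m|\le c\rho(m)m^{-1/2}$ follows from $|g_{m,\e}|\le c\rho(m)\e$ on $I_\star$ together with the length estimate above and $\Delta\al=\kappa\e$. The \emph{oscillatory} bound $|J_m|\le c\rho(m)\e^{1/2}/(\delta_m\,m^{1/2})$ follows from one integration by parts, using $\min|\phi'_{l_\star+1}|\ge c\delta_m/\kappa$ and the refined total-variation estimate $\TV(g_{m,\e},I)\le c\rho(m)\e^{1/2}/m^{1/2}$ (which results from summing the $\TV$ bound of Lemma~\ref{lem:apply lemma 1} over $\asymp m^{-1/2}/\e^{1/2}$ sub-intervals of length $\e^{1/2}$, noting $(1+\e^{-1/2}|\al|)^{-1}\asymp\e^{1/2}$ on $I_\star$). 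Together these give $|J_m|\le c\rho(m)m^{-1/2}\min(1,\,\e^{1/2}/\delta_m)$.

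Finally I would sum over $1\le|m|\le\e^{-\ga}$ by splitting according to whether $\delta_m$ is ``small'' ($\le\e^{1/2}$) or ``large''. The small-$\delta_m$ contribution uses the trivial bound, with the count of such $m$ controlled by the discrepancy of $\{m\kappa|x_0|\}\bmod 1$, which for type $\eta_\star$ is $O(M^{1-1/\eta_\star+\e_1})$. The large-$\delta_m$ contribution uses the oscillatory bound together with Abel summation and the partial-quotient estimate $\sum_{m\le M}1/\delta_m\le c\,M^{\eta_\star}\ln M$ from the continued-fraction expansion of $\kappa|x_0|$. Balancing the two contributions yields the decay $O(\e^{1/2}\ln(1/\e))$ under the stated threshold $\bt>\eta_\star/2+1$. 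The main obstacle, and what distinguishes this lemma from Lemma~\ref{lem:exc cases alt1}, is obtaining the sharp exponent $\eta_\star/2+1$ rather than the weaker $\eta_\star+1/2$ that results from the naive pointwise bound $1/\delta_m\le c\,m^{\eta_\star+\e_1}$; this sharper threshold requires genuinely exploiting the rarity of small $\delta_m$ through discrepancy/equidistribution estimates combined with the $m^{-1/2}$ gain coming from the length of the integration interval.
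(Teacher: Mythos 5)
Your overall skeleton --- taking $l=l_\star+1$, converting the sum to an integral via Lemma~\ref{lem:sum1 est}, integrating by parts once, and closing the sum over $m$ with the Diophantine condition on $\kappa|x_0|$ --- matches the paper's. The gap is in your ``oscillatory'' bound $|J_m|\le c\rho(m)\e^{1/2}/(\delta_m m^{1/2})$: you pair the total variation of $g_{m,\e}$ over the whole interval with the \emph{global} lower bound $|\phi_{l_\star+1}'|\ge c\delta_m/\kappa$, which is attained only at the single endpoint $\alst$. This is too lossy, and neither the trivial bound nor your summation devices can repair it. Concretely, take $Q=\kappa|x_0|$ of type exactly $\eta_\star>3$ and a resonant $m$ with $\delta_m=\langle mQ\rangle\asymp m^{-\eta}$, $\eta$ slightly below $\eta_\star$; choosing $\e$ with $\delta_m\asymp\e^{1/2}$ puts this $m$ in the admissible range $m\asymp\e^{-1/(2\eta)}\le\e^{-\ga}$, and your combined bound for this single term is $\asymp m^{-\bt-1/2}\asymp\e^{(\bt+1/2)/(2\eta)}$, which is $\gg\e^{1/2}$ whenever $\bt<\eta_\star-1/2$. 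Hence your estimates intrinsically force $\bt\ge\eta_\star-1/2$, strictly stronger than the claimed $\bt>(\eta_\star/2)+1$ once $\eta_\star>3$. Discrepancy or equidistribution counts cannot help, because the failure already occurs for a single value of $m$.

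The paper avoids this by exploiting the quadratic escape of $\kappa\phi'_{l_\star+1}$ from the integer near $\alst$: since the level $l_\star+1$ is not attained, $-\kappa\phi_{l_\star+1}'(\al)\ge\langle mQ\rangle+c|m|(\alst-\al)^2$ (see \eqref{exc2 misc v2}). Splitting $[\al_{l_\star+1/2}^*,\alst]$ into subintervals of length $\e^{1/2}$ and pairing the \emph{local} TV bound $c\rho(m)\e$ with this \emph{local} lower bound gives $\sum_n \e\bigl(\langle mQ\rangle+|m|\e n^2\bigr)^{-1}\le c\e^{1/2}(|m|\langle mQ\rangle)^{-1/2}$, i.e. $(|m|\delta_m)^{-1/2}$ in place of your $m^{-1/2}\delta_m^{-1}$ --- a gain of $\delta_m^{1/2}$ exactly where it is needed; this piece then only requires $\bt>(\eta_\star+1)/2$. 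The remaining pieces (the boundary term and $\int|g_{m,\e}|\,|(1/\sin(\pi\kappa\phi_{l_\star+1}'))'|\dd\al$, the latter handled via monotonicity of $\phi_{l_\star+1}'$ on this interval) are bounded by $c\rho(m)\e\langle mQ\rangle^{-1}$ with a \emph{full} power of $\e$; that extra $\e^{1/2}$, combined with the naive pointwise bound $\langle mQ\rangle^{-1}\le c|m|^{\eta_1}$ and $|m|\le\e^{-\ga}$, $\ga=1/(2(\bt-1))$, is precisely what produces the threshold $\bt>(\eta_\star/2)+1$. No discrepancy estimates are used anywhere in the paper's argument.
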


The two lemmas prove Lemma~\ref{lem:exc cases}.

\subsection{Proof of Lemma~\ref{lem:exc cases alt1}}\label{ssec:exc1st}
Recall that $\alst$ satisfies $\phi^{\prime\prime}(\alst)=0$, $\alst$ is a local maximum of $\phi’(\al)$ and
$l_\star=\lfloor\kappa \phi’(\alst)\rfloor$. The dependence of $l_\star$ on $m$ is omitted for simplicity.  Consider the subintervals 
\be\label{exc2 U12}
U_1:=[\al_{l_\star}^*,\al_{l_\star}^*+\e^{1/2}],\ 
U_2:=[\al_{l_\star}^*+\e^{1/2},\nu],\ U_1\cup U_2=[\al_{l_\star}^*,\nu].
\ee
Let $J_n$, $n=1,2$, be defined as in \eqref{Jn def first} with $g_\e=g_{m,\e}$, $l=l_\star$ and $U_{1,2}$ defined in \eqref{exc2 U12}. Using \eqref{exc2 g_e ass}, direct estimation implies $|J_1|\le c\rho(m)\e^{1/2}$. Clearly,
\be\label{Jn-eps def}\begin{split}
|J_2|\le & \frac1{\Delta\al}\left|\int_{U_2} (g_{m,\e} h_{l_\star})(\al) e\biggl(\frac{\phi_{l_\star}(\al)}\e\biggr)\dd \al\right|\le c(J_{2,1}+J_{2,2}),\\
J_{2,1}:=&\biggl| \frac{g_{m,\e}(\al)}{\sin(\pi\kappa \phi_{l_\star}’(\al))}\bigr|_{\al_{l_\star}^*+\e^{1/2}}^{\nu}\biggr|,\ 
J_{2,2}:=\int_{U_2}\biggl|\pa_\al\frac{g_{m,\e}(\al)}{\sin(\pi\kappa \phi_{l_\star}’(\al))}\biggr|\dd\al,
\end{split}
\ee
where we have used the definition of $h_l$.

Clearly, $\phi(\al)=-m|x_0| \cos(\al-\al_{x_0})$ and $\phi’(\al)=m|x_0| \sin(\al-\al_{x_0})$, where $x_0=|x_0|(\cos\al_{x_0},\sin\al_{x_0})$. Since $\alst$ is a local maximum of $\phi’(\al)$, we can write 
\be\label{phi pr alt}
\kappa\phi’(\al)=\cx m\cos(\al-\alst),\ \cx:=\kappa|x_0|\text{sgn}(m). 
\ee
Using that $\kappa\phi’(\al_{l_\star}^*)=l_\star$ gives
\be\label{phi cos}\begin{split}
\kappa\phi_{l_\star}’(\al)=&\cx m\cos(\alst-\al)-l_\star\\
=&\cx m[\cos(\alst-\al)-\cos(\alst-\al_{l_\star}^*)],\ \al\in U_2.
\end{split}
\ee
Therefore,
\be\label{exc2 misc}\begin{split}
&\phi_{l_\star}’(\al)\asymp |m|\de(\al-\al_{l_\star}^*),\ \al\in U_2,\\
&\de:=\al_\star-\al_{l_\star}^*\ge c(\langle m\kappa\vec\al_\star^\perp\cdot x_0\rangle/|m|)^{1/2},
\end{split}
\ee
because $\de\le \de+(\alst-\al)\le 2\de$ when $\al\in U_2$.
By construction, $\kappa\phi’_{l_\star}(\nu)$ is farther away from the nearest integer than $\kappa\phi’_{l_\star}(\al_{l_\star}^*+\e^{1/2})$, i.e. 
\be\label{phi pr lb}
\langle\kappa\phi’_{l_\star}(\nu)\rangle\ge \langle\kappa\phi_{l_\star}’(\al_{l_\star}^*+\e^{1/2})\rangle.
\ee
Therefore, from \eqref{exc2 g_e ass}, \eqref{exc2 misc} and \eqref{phi pr lb},
\be\label{exc2 integr term}\begin{split}
J_{2,1}\le c \rho(m)\e\frac{1}{|m|\de\e^{1/2}}.
\end{split}
\ee

Consider now $J_{2,2}$. Clearly, $J_{2,2}\le J_{2,2,1}+J_{2,2,2}$, where
\be\label{B integr bnd 1}\begin{split}
J_{2,2,1}&:=\int_{U_2}\biggl|\frac{1}{\sin(\pi\kappa \phi_{l_\star}’(\al))}\pa_\al g_{m,\e}(\al)\biggr|\dd\al,\\
J_{2,2,2}&:=
\int_{U_2}\biggl|g_{m,\e}(\al)\pa_\al\frac{1}{\sin(\pi\kappa \phi_{l_\star}’(\al))}\biggr|\dd\al.
\end{split}
\ee
Set $N:=\lfloor(\nu-\al_{l_\star}^*)/\e^{1/2}\rfloor$ and split $U_2$ into $N$ subintervals 
\be\label{exc2 U2n}
U_{2,n}:=\al_{l_\star}^*+\e^{1/2}[n,n+1],\ n=1,2,\dots,N-1,\
U_{2,N}:=\al_{l_\star}^*+[\e^{1/2}N,\nu],
\ee
analogously to \eqref{sum est 1 v3}. Since $\nu-\al_{l_\star}^*$ is bounded, $N\le c\e^{-1/2}$. By \eqref{exc2 g_e ass}, the first line in \eqref{exc2 misc} and the definition of $U_{2,n}$,
\be\label{J21 bnd 1}\begin{split}
J_{2,2,1}\le & \sum_{n=1}^N \TV(g_{m,\e},U_{2,n})\max_{\al\in U_{2,n}}|\sin(\pi\kappa \phi_{l_\star}’(\al))|^{-1}\\
\le & c\rho(m)\e\sum_{n=1}^{N} \frac{1}{|m|\de\e^{1/2}n}\le c\frac{\rho(m)}{|m|\de}\e^{1/2}\ln(1/\e).
\end{split}
\ee
To estimate $J_{2,2,2}$ we again use \eqref{exc2 g_e ass}:
\be\label{J22 bnd 1}\begin{split}
J_{2,2,2}&\le
c\rho(m)\e\int_{U_2}\biggl|\pa_\al\frac{1}{\sin(\pi\kappa \phi_{l_\star}’(\al))}\biggr|\dd\al.
\end{split}
\ee
Since $0<\kappa\phi_{l_\star}’(\al)\le 1/2$ when $\al\in U_2$, the derivative in \eqref{J22 bnd 1} has a constant sign on $U_2$ and by \eqref{exc2 U12}, \eqref{exc2 misc} and \eqref{phi pr lb}
\be\label{J22 bnd 2}\begin{split}
J_{2,2,2}&\le
c \frac{\rho(m)\e}{|\sin(\pi\kappa \phi_{l_\star}’(\al_{l_\star}^*+\e^{1/2}))|}\le c\frac{\rho(m)}{|m|\de}\e^{1/2}.
\end{split}
\ee

Combining \eqref{exc2 integr term}, \eqref{J21 bnd 1} and \eqref{J22 bnd 2} with the estimate for $J_1$ and using \eqref{exc2 misc} gives
\be\label{exc2 Jn bnd}\begin{split}
|J_1|+|J_2|&\le c \frac{\rho(m)}{(|m|\langle m\kappa\vec\al_\star^\perp\cdot x_0\rangle)^{1/2}}\e^{1/2}\ln(1/\e).
\end{split}
\ee
Adding the estimates \eqref{exc2 Jn bnd} for all $1\le|m|\le \e^{-\ga}$ gives $O(\e^{1/2}\ln(1/\e))$ provided that \textcolor{black}{$\bt>(\eta_\star+1)/2$.  }

\subsection{Proof of Lemma~\ref{lem:exc cases alt2}} 
We use \eqref{Jn def first} with $g_\e=g_{m,\e}$ and $l=l_\star+1$. Clearly, there is no $J_1$. We need to estimate only $J_2$, where $U_2:=[\al_{l_\star+1/2}^*,\alst]$:
\be\label{Jn-eps def v2}\begin{split}
|J_2|&\le c\biggl| \frac{g_{m,\e}(\al)}{\sin(\pi\kappa \phi_{l_\star+1}’(\al))}\bigr|_{\al_{l_\star+1/2}^*}^{\alst}\biggr|
+c\int_{\al_{l_\star+1/2}^*}^{\alst}\biggl|\pa_\al\frac{g_{m,\e}(\al)}{\sin(\pi\kappa \phi_{l_\star+1}’(\al))}\biggr|\dd\al.
\end{split}
\ee
Since $|\kappa \phi_{l_\star+1}’(\al_{l_\star+1/2}^*)|=1/2$, the integrated term is bounded by
\be\label{exc2 integr term v2}\begin{split}
c \rho(m)\e\langle m\kappa\vec\al_\star^\perp\cdot x_0\rangle^{-1}.
\end{split}
\ee

To estimate the integral, we argue similarly to \eqref{B integr bnd 1}--\eqref{exc2 Jn bnd}. The analogue of \eqref{exc2 U2n} becomes
\be\label{exc2 U2n v2}\begin{split}
U_{2,n}:=&\alst-\e^{1/2}[n-1,n],\ n=1,2,\dots,N-1,\\
U_{2,N}:=&[\alst-\e^{1/2}(N-1),\al_{l_\star+1/2}^*],\\ 
N=&\lceil(\alst-\al_{l_\star+1/2}^*)\e^{-1/2}\rceil=O(\e^{-1/2}).
\end{split}
\ee
Similarly to \eqref{phi cos}, \eqref{exc2 misc}, 
\be\label{phi cos v2}\begin{split}
-\kappa\phi_{l_\star+1}’(\al)=&\lceil \cx m\rceil-\cx m\cos(\alst-\al)\\
=&(1-\{Qm\})+\cx m(1-\cos(\alst-\al))],\ \al\in [\al_{l_\star+1/2}^*,\alst].
\end{split}
\ee
Recall that there is no $\al$ such that $\kappa\phi’(\al)=l_\star+1$. Therefore,
\be\label{exc2 misc v2}\begin{split}
&-\kappa\phi_{l_\star+1}’(\al)\ge \langle Qm\rangle+c|m|(\alst-\al)^2,\ \al\in [\al_{l_\star+1/2}^*,\alst].
\end{split}
\ee
Similarly to \eqref{J21 bnd 1},
\be\label{J21 bnd 1 v2}\begin{split}
J_{2,1}\le & \sum_{n=1}^N \TV(g_{m,\e},U_{2,n})\max_{\al\in U_{2,n}}|\sin(\pi\kappa \phi_{l_\star+1}’(\al))|^{-1}\\
\le & c\rho(m)\e\sum_{n=0}^{N-1} \frac{1}{\langle m\kappa\vec\al_\star^\perp\cdot x_0\rangle+|m|\e n^2}.
\end{split}
\ee
To estimate the last sum, denote $r^2:=\langle m\kappa\vec\al_\star^\perp\cdot x_0\rangle/|m|$. Then
\be
\e^{1/2}\sum_{n=0}^{N-1} \frac{1}{r^2+\e n^2}\le c\int_0^\infty \frac{\dd x}{x^2+r^2}\le c/r.
\ee
Hence
\be\label{J21 bnd 1 v3}\begin{split}
J_{2,1}\le  c\rho(m)\e^{1/2} (|m|\langle m\kappa\vec\al_\star^\perp\cdot x_0\rangle)^{-1/2}.
\end{split}
\ee
The analogue of \eqref{J22 bnd 2} becomes
\be\label{J22 bnd 2 v2}\begin{split}
J_{2,2}&\le
c \frac{\rho(m)\e}{|\sin(\pi\kappa \phi_{l_\star+1}’(\alst))|}\le c\frac{\rho(m)\e}{\langle m\kappa\vec\al_\star^\perp\cdot x_0\rangle},
\end{split}
\ee
where we have used \eqref{phi cos v2} with $\al=\alst$.

The estimates \eqref{exc2 integr term v2} in \eqref{J22 bnd 2 v2} are the same. Summing them over $1\le|m|\le\e^{-\ga}$ and using that they contain an “extra” factor $\e^{1/2}$ gives the sum $O(\e^{(1/2)-\ga(\eta_\star-\bt+1)})$. Recall that $\ga=1/(2(\bt-1))$. The exponent is positive if \textcolor{black}{$\bt>(\eta_\star/2)+1$.}
The estimate \eqref{J21 bnd 1 v3} is similar to \eqref{exc2 Jn bnd}, so it gives \textcolor{black}{$\bt>(\eta_\star+1)/2$.}

\section{Proofs of Lemmas~\ref{lem:generic} and \ref{lem:ineq II}}\label{sec:generic}

\subsection{Proof of Lemma~\ref{lem:generic}}\label{ssec:generic}

For each $m\not=0$, there are at most $O(|m|)$ integers $l\in\mathcal L(m)$ (cf. the paragraph following \eqref{exc intervals}). By \eqref{int1 est}, the contribution of any finite number of terms to the sum in \eqref{sum-all} is still of order $O(\e^{1/2}\ln(1/\e))$. Hence, in what follows we assume $|m|\gg 1$. Pick some $0<\de_1\ll 1$, and consider three sets
\be\label{L sets}\begin{split}
&L_1(m):=\{l\in \mathcal L(m): l=\kappa\phi’(\al),\ |\al|\le\de_1\},\\
&L_2(m):=\{l\in \mathcal L(m): \kappa |x_0|(1-\de_1)\le |l/m| \le \kappa |x_0|\},\\
&L_3(m):=\{l\in \mathcal L(m): |l/m|< \kappa |x_0|(1-\de_1),\ l\not\in L_1(m)\}.
\end{split}
\ee
There are $O(|m|)$ elements in each $L_n(m)$. 

In what follows we apply \eqref{int1 est} with $g_\e=g_{m,\e}$. Recall that the right side of \eqref{int1 est} is denoted $W_{l,m}(\e)$. We sum $W_{l,m}(\e)$ over $l\in L_n(m)$, $n=1,2,3$, and then over $m$. 
All the sums with respect to $m$ are assumed to be over $M\le |m|\le \e^{-\ga}$ for some $M\gg 1$. 

\noindent
\underline{If $l\in L_3(m)$}, one has $\phi_{\text{mn}}^{\prime\prime}(I_l)\asymp |m|$, $|\al_l^*|\asymp 1$, and Lemma~\ref{lem:int1 est} implies
\be\label{sum 3}
\sum_{m}\sum_{l\in L_3(m)} W_{l,m}(\e)=\e^{1/2}\ln(1/\e)\sum_{m}O(\rho(m))=O(\e^{1/2}\ln(1/\e)),\
\textcolor{black}{\bt>1.}
\ee

\noindent
\underline{If $l\in L_1(m)$}, one has $\phi_{\text{mn}}^{\prime\prime}(I_l)\asymp |m|$, so we look at the sum $\sum_{l\in L_1(m)}|\al_l^*|^{-1}$. 
Set $l_0=\lfloor \kappa\phi’(0)\rfloor$. Then, 
\be\label{al lower bnd}
c|m\al|\ge |\kappa\phi’(\al)-\kappa\phi’(0)|=|\kappa\phi’(\al)-l_0-\{\kappa\phi’(0)\}|,\ 
|\al|\le\de_1. 
\ee
This implies
\be\label{ang0 bnd}
|\al_{l_0}^*|,|\al_{l_0+1}^*|\ge c \langle m\kappa\vec\al_0^\perp\cdot x_0\rangle/|m|
\ee
and
\be\label{sum 11}\begin{split}
\sum_{l\in L_1(m)}\frac1{|\al_l^*|}&\le c\sum_{l\in L_1(m)}\frac{|m|}{\vert l-l_0-\{\kappa\phi’(0)\}\vert}\\
&\le c|m|\bigl(\langle m\kappa\vec\al_0^\perp\cdot x_0\rangle^{-1}+O(\ln|m|)\bigr).
\end{split}
\ee
The first term in parentheses dominates the second, so Lemma~\ref{lem:int1 est} yields
\be\label{sum 12}\begin{split}
\sum_{m}\sum_{l\in L_1(m)} W_{l,m}(\e)&=O(\e^{1/2}\ln(1/\e))\sum_{m}\frac{\rho(m)|m|}{\langle m\kappa\vec\al_0^\perp\cdot x_0\rangle}\\
&=O(\e^{1/2}\ln(1/\e)),\ \textcolor{black}{\bt>\eta_0+2.}
\end{split}
\ee

\noindent
\underline{If $l\in L_2(m)$}, one has $\al_l^*\asymp 1$, so we look at the sums $\sum_{l\in L_2(m)}(\phi_{\text{mn}}^{\prime\prime}(I_l))^{-n}$, $n=1,2$. 
By construction (see Figure~\ref{fig:sine}),
\be
I_l=\begin{cases}
[\al_{l-(1/2)}^*,\al_{l+(1/2)}^*],& l\in\mathcal L(m),l\not= l_\star,\\
[\al_{l-(1/2)}^*,\al_l^*],& l= l_\star.
\end{cases} 
\ee
Here $\al_r^*$ is a locally unique solution of $\kappa\phi^{\prime}(\al)=r$, e.g. $\al_{l+(1/2)}^*$ is between $\al_l^*$ and $\al_{l+1}^*$. By \eqref{phi pr alt}, $|\kappa\phi^{\prime\prime}(\al)|=[(\cx m)^2-(\kappa\phi’(\al))^2]^{1/2}$. Therefore,
\be\label{phmn ineq}
\phi_{\text{mn}}^{\prime\prime}(I_l)\ge c|m|^{1/2}
\begin{cases}
(l_{\star}-(|l|+1/2))^{1/2},& l\in\mathcal L(m),l\not=\l_\star,\\
\langle m\kappa\vec\al_\star^\perp\cdot x_0\rangle^{1/2},& l=l_\star.
\end{cases}
\ee
Then,
\be\label{sum 21}\begin{split}
&\sum_{l\in L_2(m)}\frac1{(\phi_{\text{mn}}^{\prime\prime}(I_l))^n}\\
&\le \frac{c}{|m|^{n/2}}\biggl[\langle m\kappa\vec\al_\star^\perp\cdot x_0\rangle^{-n/2}+\sum_{l=0}^{ l_\star-1}(l_{\star}-(l+1/2))^{-n/2}\biggr]\\
&\le c\bigl[|m|\langle m\kappa\vec\al_\star^\perp\cdot x_0\rangle\bigr]^{-n/2},\ n=1,2,
\end{split}
\ee
and 
\be\label{sum 22}\begin{split}
\sum_{m}&\rho(m)\biggl[|m|+\frac{\ln(1/\e)}{(|m|\langle m\kappa\vec\al_\star^\perp\cdot x_0\rangle)^{1/2}}
+\frac{1}{\langle m\kappa\vec\al_\star^\perp\cdot x_0\rangle}\biggr]\\
&=O(\ln(1/\e)) \text{  if  } \textcolor{black}{\bt>\eta_\star+1.}
\end{split}
\ee
Consequently, 
\be\label{sum 23}
\sum_{m}\sum_{l\in L_2(m)} W_{l,m}(\e)=O(\e^{1/2}\ln(1/\e))\text{  if  } \textcolor{black}{\bt>\eta_\star+1.}
\ee
Combining \eqref{sum 3}, \eqref{sum 12} and \eqref{sum 23} finishes the proof.

\subsection{Proof of Lemma~\ref{lem:ineq II}}\label{ssec:ineq II}
Clearly,
\be\label{prf1 et II}\begin{split}
\sum_{|\al_k|\le\pi/2} & \int_{|\al-\al_k|\le \Delta\al/2}|A_0(\al,\e)-A_0(\al_k,\e)|\dd\al\\
&\le \Delta\al\sum_{|\al_k|\le\pi/2}\max_{\al:|\al-\al_k|\le \Delta\al/2}|A_0(\al,\e)-A_0(\al_k,\e)|\\
&\le c \TV(g_{0,\e},[-\pi/2,\pi/2])=O(\e^{1/2}\ln(1/\e)).
\end{split}
\ee
where we have used \eqref{Am small g} and \eqref{main coef orig} with $m=0$ and $I=[-\pi/2,\pi/2]$. 
In this case, $\phi\equiv0$ and $\phi_{\text{mx}}^{\prime\prime}=\phi_{\text{mn}}^{\prime\prime}=0$.

\section{Proofs of Lemmas in section~\ref{sec:caseB}}\label{sec:caseB proofs}

\subsection{Proof of Lemma~\ref{lem:aux props rem}}

Properties \eqref{A props rem} follow immediately from an easy computation in coordinates. To prove \eqref{r props rem} note that
\be
b(\theta,\al)=\frac{\al\cdot(y(\theta)-x_0)}{\CA(\theta)-\al}=\pm|y(\theta)-x_0|\frac{\sin(\CA(\theta)-\al)}{\CA(\theta)-\al}.
\ee
The sign (plus or minus) is selected based on the location of $x_0$ relative to $y(0)$. For example, the sign is a plus in the arrangement depicted in Figure~\ref{fig:rem A_fn}. The desired assertion is now obvious.

\subsection{Proof of Lemma~\ref{lem:rem A-to-g2}}

\begin{lemma}\label{lem:three cases} One has
\be\label{rem mod Omega}\begin{split}
&\int_{-a}^a\frac{\dd\theta}{1+((\theta^2-\al)/\e)^2}\le c
\begin{cases}
\e\bigl(1+(\al-a^2)/\e\bigr)^{-1},& \al\ge a^2,\\
\e^{1/2},& 0\le\al\le 4\e,\\
\e^{1/2}\bigl(1+|\al|/\e\bigr)^{-3/2},& \al\le 0.
\end{cases}
\end{split}
\ee
\end{lemma}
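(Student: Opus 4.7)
The three cases in the claim are distinguished by where the integrand $1/(1+((\theta^2-\alpha)/\epsilon)^2)$ concentrates, and each is handled by a different argument. By the symmetry $\theta\to -\theta$, in every case I reduce to $2\int_0^a$. The plan is to dispose of Cases 2 and 3 by a single rescaling $\theta = \epsilon^{1/2}s$ and to handle Case 1 by a dedicated splitting argument; Case 1 will be the main obstacle.

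For Case 2 ($0\le\alpha\le 4\epsilon$) and Case 3 ($\alpha\le 0$), the substitution $\theta=\epsilon^{1/2}s$ gives
\[
\int_0^a \frac{d\theta}{1+((\theta^2-\alpha)/\epsilon)^2} = \epsilon^{1/2}\int_0^{a/\epsilon^{1/2}}\frac{ds}{1+(s^2-\alpha/\epsilon)^2}.
\]
In Case 2, setting $\beta=\alpha/\epsilon\in[0,4]$, the integrand is bounded by $1$ on $|s|\le 3$ and by $c/s^4$ on $|s|\ge 3$ (using $s^2-\beta\ge s^2/2$), so the $s$-integral is $O(1)$ uniformly in $\beta$, yielding the $O(\epsilon^{1/2})$ bound. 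In Case 3, setting $\mu=|\alpha|/\epsilon\ge 0$, for $\mu\ge 1$ I use the pointwise estimate $1+(s^2+\mu)^2\ge \mu^2+2\mu s^2$ and compute $\int ds/(\mu^2+2\mu s^2)=c\mu^{-3/2}$ explicitly, while for $\mu\le 1$ the $s$-integral is $O(1)$; combining yields $\le c(1+\mu)^{-3/2}$, which is the desired bound after multiplying by $\epsilon^{1/2}$.

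Case 1 ($\alpha\ge a^2$) is the technical core. Set $\delta:=\alpha-a^2\ge 0$ and change variable $\xi = a^2-\theta^2$, so that $\xi\in[0,a^2]$ and $(\theta^2-\alpha)/\epsilon = -(\xi+\delta)/\epsilon$. The integral rewrites as
\[
2\int_0^a\frac{d\theta}{1+((\theta^2-\alpha)/\epsilon)^2} = \int_0^{a^2}\frac{d\xi}{\sqrt{a^2-\xi}\,\bigl(1+((\xi+\delta)/\epsilon)^2\bigr)}.
\]
The Lorentzian factor now concentrates near the \emph{left} endpoint $\xi=0$ (since $\delta\ge 0$), while the square-root singularity sits at the \emph{far} endpoint $\xi=a^2$. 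I exploit this separation by splitting $[0,a^2]=[0,a^2/2]\cup[a^2/2,a^2]$. On $[0,a^2/2]$ the factor $1/\sqrt{a^2-\xi}$ is bounded by $\sqrt{2}/a$, and evaluating the remaining Lorentzian integral yields $\epsilon[\arctan((a^2/2+\delta)/\epsilon)-\arctan(\delta/\epsilon)]\le \epsilon\,\arctan(\epsilon/\delta)\le c\epsilon^2/(\epsilon+\delta)$, which is the claimed main contribution $c\epsilon/(1+\delta/\epsilon)$. On the tail $[a^2/2,a^2]$ the Lorentzian factor is $O(\epsilon^2/a^4)$ because $\xi+\delta\ge a^2/2$, while $\int d\xi/\sqrt{a^2-\xi}=O(1)$, producing an $O(\epsilon^2)$ contribution that is absorbed into the main estimate (for $\alpha$ bounded, as in the ambient setting of the paper).

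The main obstacle is the Case 1 estimate: the key is to keep the Lorentzian peak at $\xi=0$ separated from the integrable square-root singularity at $\xi=a^2$, and to convert the $\arctan$ difference cleanly into the prescribed form $c\epsilon/(1+(\alpha-a^2)/\epsilon)$. Once this separation argument is in place, Cases 2 and 3 follow routinely from the unifying rescaling.
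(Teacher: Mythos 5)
Your proof is correct in all three cases; I checked the key inequalities (the bound $s^2-\beta\ge s^2/2$ for $|s|\ge3$, $\beta\le4$; the explicit $\mu^{-3/2}$ integral in Case 3; and the $\arctan$ manipulation $\arctan A-\arctan B=\arctan\frac{A-B}{1+AB}\le\arctan(\e/\de)$ together with $\arctan x\le cx/(1+x)$ in Case 1) and they all hold. Cases 2 and 3 are essentially the paper's argument: the paper likewise rescales $\theta=\e^{1/2}t$ and, in Case 3, bounds $\int_0^\infty \dd t/(\mu^2+t^2)^2\le c/\mu^3$ with $\mu=(1+|\al|/\e)^{1/2}$. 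Case 1 is where you genuinely diverge: the paper factors $\theta^2-\al=(\theta-\mu)(\theta+\mu)$ with $\mu=\al^{1/2}$, bounds $|\theta^2-\al|\ge(\mu-\theta)\mu$ on $[0,a]$, and integrates the rational function $(1+\e^{-1}(\mu-\theta)\mu)^{-2}$ in closed form, finishing with $(\mu-a)\mu\asymp\al-a^2$; you instead substitute $\xi=a^2-\theta^2$, which linearizes the phase at the cost of an integrable $1/\sqrt{a^2-\xi}$ weight located away from the Lorentzian peak, and then split the interval. Both routes exploit the same underlying fact (the quadratic is effectively linear in $\theta$ near $\theta=a$ because $\al\ge a^2$ keeps the double root away from the integration range), and both implicitly use the boundedness of $\al$ (you in absorbing the $O(\e^2)$ tail, the paper in asserting $1/\al\asymp1$), which is harmless since in the application $\al$ is an angle in $[-\pi/2,\pi/2]$. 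Your version avoids guessing an antiderivative but requires the small extra step of converting the $\arctan$ difference into the form $\e(1+\de/\e)^{-1}$, which you carry out correctly.
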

See subsection~\ref{ssec:three cases} for the proof. 
By \eqref{A simpl orig} and Lemmas~\ref{lem:aux props rem} and \ref{lem:three cases}, an easy calculation shows
\be\label{A-outside}\begin{split}
\Delta\al &\sum_{m\in\BZ}\biggl(\sum_{\al_k\le 0}+\sum_{0\le\al_k\le 4\e}+\sum_{\al_{\text{mx}}\le\al_k\le\pi/2 }\biggr)|A_m(\al_k,\e)|\\
&\le c \e^{1/2}\sum_{m\in\BZ}\rho(m)=O(\e^{1/2}),\ \textcolor{black}{\bt>1.}
\end{split}
\ee
Hence the range of $\al_k$ in the sum on the left in \eqref{extra terms I} can be restricted to the interval $\Omega_\e=[4\e,\al_{\text{mx}}]$.

Next, we reduce the range of $m$ and simplify $A_m(\al,\e)$. Introduce two sets 
\be\label{rem two mod sets}\begin{split}
&\Xi_1’(\al,\e):=\{\theta\in\br: |\theta^2-\al| \ge (\e\al)^{1/2}\},\\
&\Xi_2’(\al,\e):=\{\theta\in\br: |\theta^2-\al| \le (\e\al)^{1/2}\},\ 
4\e\le \al\le\pi/2,
\end{split}
\ee
and the corresponding integrals
\be\label{rem mod1}\begin{split}
&\Phi_n(\al):=\int_{\Xi_n’(\al,\e)}\frac{\dd\theta}{1+((\theta^2-\al)/\e)^2},\ 4\e\le \al\le\pi/2,\ n=1,2.
\end{split}
\ee
If either $\Xi_1’(\al,\e)=\varnothing$ or $\Xi_2’(\al,\e)=\varnothing$, then the corresponding $\Phi_n(\al)$ is assumed to be zero. The sets $\Xi_1’(\al,\e)$ and $\Xi_2’(\al,\e)$ model the sets 
\be\label{rem two sets}\begin{split}
&\Xi_1(\al,\e):=\{\theta\in[-a,a]: |\CA(\theta)-\al| \ge (\e\al)^{1/2}\},\\
&\Xi_2(\al,\e):=\{\theta\in[-a,a]: |\CA(\theta)-\al| \le (\e\al)^{1/2}\},\ 
4\e\le \al\le\pi/2,
\end{split}
\ee
respectively.
The domain of integration with respect to $\theta$ in \eqref{Am small g rem} is $\Xi_2(\al,\e)$.

Straightforward calculations give (see subsection~\ref{ssec:Phi1_2} for the proof):
\be\label{rem mod2}\begin{split}
&\Phi_1(\al)=
O(\e^{3/2}/\al),\
\Phi_2(\al)=O(\e/\al^{1/2}),\ \al\ge 4\e.
\end{split}
\ee

Since $\Phi_2$ dominates $\Phi_1$, we compute
\be\begin{split}\label{extra extra rem}
\Delta\al\sum_{|m|\ge \e^{-\ga}} \sum_{\al_k\in\Omega_\e}|A_m(\al_k,\e)|
&\le c\sum_{|m|\ge \e^{-\ga}}\rho(m)\sum_{1\le k\le O(1/\e)} \frac{\e}{\al_k^{1/2}}\\
&=O(\e^{1/2}),\ \ga=1/(2(\bt-1)),
\end{split}
\ee
\textcolor{black}{if $\bt>1$}. This completes the proof of the top statement in \eqref{rem sums A-g22}. In what follows we assume $|m|\le \e^{-\ga}$.

Next we split $A_m$ into two parts $A_m^{(n)}$, which are obtained by integrating with respect to $\theta$ over $\Xi_n(\al,\e)$, $n=1,2$, respectively, in \eqref{A simpl orig}. If either $\Xi_1(\al,\e)=\varnothing$ or $\Xi_2(\al,\e)=\varnothing$, the corresponding $A_m^{(n)}$ is assumed to be zero. By combining Lemma~\ref{lem:aux props rem} and \eqref{rem mod1}, \eqref{rem mod2}, we find from \eqref{extra terms I}, \eqref{A simpl orig} 
\be\begin{split}\label{small terms rem}
\Delta\al\sum_{|m|\le \e^{-\ga}} \sum_{\al_k\in\Omega_\e} |A_m^{(1)}(\al_k,\e)|
&\le c\sum_{|m|\le \e^{-\ga}}\rho(m)\sum_{1\le k\le \e^{-1}} \frac{\e^{3/2}}{\al_k}\\
&=O(\e^{1/2}\ln(1/\e)),\ \textcolor{black}{\bt>1,}
\end{split}
\ee
This proves the bottom statement in \eqref{rem sums A-g22}. Lemma~\ref{lem:rem A-to-g2} is proven by noting that $g_{m,\e}^{(2)}=\e A_m^{(2)}$ (see \eqref{Am small g rem}).

\subsection{Proof of Lemma~\ref{lem:case B last}}\label{ssec:prf case B}

With some abuse of notation, in what follows we compute $g_{m,\e}^{(2)}$ by restricting the domain of integration with respect to $\theta$ in \eqref{Am small g rem} to the subset $\{\theta\in[0,a]:|\CA(\theta)-\al| \le (\e\al)^{1/2}\}$. The contribution involving negative $\theta$ can be estimated the same way. 

First, we simplify $g_{m,\e}^{(2)}$. Change variable $\theta\to s=\CA(\theta)-\al$ and represent $g_{m,\e}^{(2)}$ in the form
\be\label{rem gme 1}\begin{split}
g_{m,\e}^{(2)}(\al)=&\int_{|s|\le(\e\al)^{1/2}}\int_0^{\mu(\al,s)} G(\al,s,\hat t,\e)\dd\hat t \dd s,\\
G(\al,s,\hat t,\e)=&e\left(-m \vec\al\cdot \check x\right)\tilde\psi_m\bigl(b(\theta,\al)(s/\e) +h(\hat t,\theta,\al)\bigr)
F(\theta,\e \hat t)\,\pa_s\theta(s,\al),\\
\mu(\al,s):=&H_0(\e^{-1/2}\theta),\ \theta=\theta(s,\al):=\CA^{-1}(s+\al),\ 
\al\in\Omega_\e,
\end{split}
\ee
where $b(\theta,\al)$ is the same as in \eqref{r props rem}. The dependence of some functions on $\e$ is suppressed for simplicity. The unique inverse $\CA^{-1}$ is obtained by restricting its range to $\theta\ge 0$ (see the first paragraph in this subsection). By \eqref{A props rem}, $(\CA^{-1})’(t)\asymp t^{-1/2}$, $t>0$. Combined with Lemma~\ref{lem:aux props rem} this implies
\be\label{G 1 est}
|G(\al,s,\hat t,\e)|\le c\rho(m)\big[(1+(s/\e)^2)(s+\al)^{1/2}\big]^{-1},\ 
\al\in\Omega_\e.
\ee

Set $N:=\lfloor(\al_{\text{mx}}/\e)^{1/2}\rfloor$, define the intervals
\be\label{rem I aux}
U_n:=\e[n^2,(n+1)^2],\ 2\le n\le N-1,\ U_N:=[\e n^2,\al_{\text{mx}}],
\ee
and the integrals
\be\label{rem gme 3}\begin{split}
g_{m,\e}^{(2,2)}(\al):=&\int_{|s|\le \e n}\int_0^{\mu(\al,s)} G(\e n^2,s,\hat t,\e)\dd\hat t \dd s,\ 
\al\in U_n,\ 2\le n\le N.
\end{split}
\ee
By construction, $\cup_{n=2}^N U_n=\Omega_\e$.
The functions $g_{m,\e}^{(2,2)}$ are obtained from $g_{m,\e}^{(2)}$ by replacing $\al$ with $\e n^2$ in the limits of integration with respect to $s$ and in the first argument of $G$ in \eqref{rem gme 1} (i.e., everywhere except in the arguments of $\mu$).

\begin{lemma}\label{lem:rem g-to-g22} Under the assumptions of Lemma~\ref{lem:case B last}, one has if \textcolor{black}{$\bt>1$}
\be\label{rem generic sum g22}\begin{split}
\sum_{|m|\le \e^{-\ga}}\sum_{n=2}^N\sum_{\al_k\in U_n}|g_{m,\e}^{(2)}(\al_k)-g_{m,\e}^{(2,2)}(\al_k)|=O(\e^{1/2}\ln(1/\e)).
\end{split}
\ee
\end{lemma}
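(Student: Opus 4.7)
The plan is to decompose the difference as $g_{m,\e}^{(2)}(\al_k)-g_{m,\e}^{(2,2)}(\al_k)=I_1(\al_k)+I_2(\al_k)$, where $I_1$ captures the mismatch between the $s$-integration limits $|s|\le(\e\al)^{1/2}$ and $|s|\le\e n$ (with $G(\al,\cdot)$ and $\mu(\al,s)$ both evaluated at $\al$), while $I_2$ captures the replacement of $\al$ by $\al_0:=\e n^2$ in the first argument of $G$, keeping $\mu(\al,s)$ intact. A useful sanity check is that $(g_{m,\e}^{(2)}-g_{m,\e}^{(2,2)})(\al_0)=0$, since $(\e\al_0)^{1/2}=\e n$ and $G(\al_0,\cdot)$ matches itself there.

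For $I_1$, the symmetric difference between the two $s$-regions has Lebesgue measure $O(\e)$ and lies in the zone $|s|\asymp\e n$. Lemma~\ref{lem:aux props rem} and \eqref{G 1 est} combine to give $|G(\al,s,\hat t,\e)|\le c\rho(m)/(\e^{1/2}n^3)$ there, using $1+(s/\e)^2\asymp n^2$ and $\pa_s\theta=O(1/(\e^{1/2}n))$. Boundedness of $\mu=H_0$ yields $|I_1(\al_k)|\le c\rho(m)\e^{1/2}/n^3$. Summation over the $O(n)$ grid points of $U_n$, over $n=2,\dots,N\asymp\e^{-1/2}$, and over $|m|\le\e^{-\ga}$ (using that $\sum\rho(m)<\infty$ when $\bt>1$) contributes $O(\e^{1/2})$, which is acceptable.

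For $I_2$, I would apply the fundamental theorem of calculus $G(\al,\cdot)-G(\al_0,\cdot)=\int_{\al_0}^\al\pa_{\al'}G(\al',\cdot)\,d\al'$ and bound $|\pa_{\al'}G|$ term by term. Differentiation of the prefactor $e(-m\vec\al\cdot\check x)$ contributes a factor $O(|m|)$; differentiation through the argument $b(\theta,\al)s/\e+h(\hat t,\theta,\al)$ of $\tilde\psi_m$ (with $\theta=\CA^{-1}(s+\al)$) brings out $O((|s|/\e+1)/(\e^{1/2}n))$, modulated by $|\tilde\psi_m'|\le c\rho(m)/(1+(s/\e)^2)$ from Lemma~\ref{lem:psi psider}; and the contributions from $\pa_\al F$ and $\pa_\al\pa_s\theta$ are controlled via Lemma~\ref{lem:aux props rem} together with $(\CA^{-1})''(t)=O(|t|^{-3/2})$. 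Integrating over $|s|\le\e n$ and $\hat t\in[0,\|H_0\|]$, multiplying by $|\al-\al_0|\le(2n+1)\e$, and summing over $\al_k\in U_n$ (which contains $O(n)$ grid points spanning an interval of length $O(n\e)$) yields a per-$(m,n)$ contribution of order $\rho(m)[n|m|\e^{3/2}+\e\ln n+\e^{1/2}/n]$. Summation over $n$ via $\sum_n n=O(\e^{-1})$, $\sum_n\ln n=O(\e^{-1/2}\ln(1/\e))$, and $\sum_n 1/n=O(\ln(1/\e))$, and then over $|m|\le\e^{-\ga}$ using $|m|\e^{1/2}=O(1)$ (which follows from $\ga=1/(2(\bt-1))$ and the ambient condition on $\bt$), produces the required $O(\e^{1/2}\ln(1/\e))$ bound.

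The main obstacle is controlling the $|m|$ factor produced by differentiating the oscillatory prefactor $e(-m\vec\al\cdot\check x)$: the resulting contribution $\e^{1/2}\sum_m\rho(m)|m|$ is only just tamed by carefully combining the cutoff $|m|\le\e^{-\ga}$ with the decay provided by $\rho(m)$, and it is what ultimately ties this estimate to the strengthened hypothesis on $\bt$ used in the ambient Lemma~\ref{lem:case B last}. All remaining estimates amount to routine integrations against the $1/(1+(s/\e)^2)$ decay of $\tilde\psi_m$ and the algebraic decay in $n$ afforded by Lemma~\ref{lem:aux props rem}.
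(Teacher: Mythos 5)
Your proposal is correct and follows essentially the same route as the paper: the paper likewise interposes an intermediate function (its $g_{m,\e}^{(2,1)}$, obtained by first adjusting the $s$-limits from $(\e\al)^{1/2}$ to $\e n$ and then replacing $\al$ by $\e n^2$ in $G$), and its bounds \eqref{del gme 2}, \eqref{rem Gder bnd final}, \eqref{del gme 3} coincide with your estimates $c\rho(m)\e^{1/2}/n^3$ for the limit mismatch and $c\rho(m)(n|m|\e^{3/2}+\e\ln n+\e^{1/2}/n)$ per $(m,n)$ after accounting for the $O(n)$ grid points in $U_n$. If anything, you are slightly more explicit than the paper's display \eqref{lim repl 2} about carrying the $O(n)$ multiplicity through the second term and about why $\sum_m\rho(m)|m|\e^{1/2}$ is controlled under the ambient hypothesis on $\bt$.
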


By \eqref{rem generic sum g22} we can replace $g_{m,\e}^{(2)}$ with $g_{m,\e}^{(2,2)}$ in \eqref{rem sums A-g22} (the sum with respect to $k$ is understood as written in \eqref{rem generic sum g22}) and it suffices to apply Lemma~\ref{lem:sum1 est} to $g_{m,\e}^{(2,2)}$ on each $U_n$.

\begin{lemma}\label{lem:rem gme} Under the assumptions of Lemma~\ref{lem:case B last}, one has:
\be\label{g_int rem}\begin{split}
&|g_{m,\e}^{(2,2)}(\al)|\le c\rho(m)\e/\al^{1/2},\ \al\in U_n;\
\int_{U_n}|g_{m,\e}^{(2,2)}(\al)|\dd\al \le c\rho(m)\e^{3/2};\\
&\TV(g_{m,\e}^{(2,2)},U_n)\le c\rho(m)\e^{1/2}/n;\quad 2\le n\le N,\ m\in\mathbb Z.
\end{split}
\ee
\end{lemma}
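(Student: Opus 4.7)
The plan is to exploit the fact that in $g_{m,\e}^{(2,2)}$ the dependence on $\al$ has been localized entirely into the upper limit $\mu(\al,s)=H_0(\e^{-1/2}\CA^{-1}(s+\al))$ of the inner $\hat t$-integral, since all other occurrences of $\al$ in the integrand of $g_{m,\e}^{(2)}$ were frozen to $\e n^2$ when passing to $g_{m,\e}^{(2,2)}$ (see \eqref{rem gme 3}). Combined with the pointwise estimate \eqref{G 1 est} on $G$, this reduces (1) and (2) to routine size bounds and (3) to a single application of the bounded-variation hypothesis~\ref{ass:H0}(2).

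For the pointwise bound, I would insert $|\mu(\al,s)|\le \Vert H_0\Vert$ and \eqref{G 1 est} into \eqref{rem gme 3}. For $\al\in U_n$ and $|s|\le\e n$ one has $s+\e n^2\asymp \e n^2\asymp\al$, so the factor $(s+\e n^2)^{-1/2}$ pulls out as $c\al^{-1/2}$; the change of variable $s=\e\tilde s$ then gives $\int_{|\tilde s|\le n}(1+\tilde s^2)^{-1}\dd\tilde s\le c$, producing
\[
|g_{m,\e}^{(2,2)}(\al)|\le c\rho(m)\e/\al^{1/2},\quad \al\in U_n.
\]
Integrating this bound over $U_n$ and using $|U_n|\asymp\e n$ together with $\al^{-1/2}\asymp\e^{-1/2}/n$ on $U_n$ gives the $L^1$ estimate $\int_{U_n}|g_{m,\e}^{(2,2)}|\dd\al\le c\rho(m)\e^{3/2}$.

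The main work lies in the total variation estimate. For any partition $\al_0<\al_1<\cdots<\al_K$ of $U_n$, the fact that $G(\e n^2,s,\hat t,\e)$ is $\al$-independent lets me write
\[
g_{m,\e}^{(2,2)}(\al_k)-g_{m,\e}^{(2,2)}(\al_{k-1})=\int_{|s|\le\e n}\int_{\mu(\al_{k-1},s)}^{\mu(\al_k,s)}G(\e n^2,s,\hat t,\e)\dd\hat t\dd s.
\]
Taking absolute values, summing in $k$, and noting that for each fixed $s$ the points $\e^{-1/2}\CA^{-1}(s+\al_k)$ form a monotone partition of the image interval $\e^{-1/2}\CA^{-1}(s+U_n)$, I obtain
\[
\TV(g_{m,\e}^{(2,2)},U_n)\le \int_{|s|\le\e n}\sup_{\hat t}|G(\e n^2,s,\hat t,\e)|\cdot\TV\bigl(H_0,\e^{-1/2}\CA^{-1}(s+U_n)\bigr)\dd s.
\]
By \eqref{A props rem}, $(\CA^{-1})'(t)\asymp t^{-1/2}\asymp(\e n^2)^{-1/2}$ uniformly for $t\in s+U_n$, and $|U_n|\asymp\e n$, so the image interval has length $\asymp\e^{-1/2}\cdot\e n\cdot(\e n^2)^{-1/2}\cdot\e^{1/2}=1$; assumption~\ref{ass:H0}(2) then yields $\TV(H_0,\e^{-1/2}\CA^{-1}(s+U_n))\le c$. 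The remaining $s$-integral is exactly the one estimated in step (1), giving $c\rho(m)\e^{1/2}/n$, as desired.

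The one step that needs care is verifying uniformity of the scaling $s+\al\asymp\e n^2$ for all $|s|\le\e n$ and $\al\in U_n$, so that the asymptotic $(\CA^{-1})'(t)\asymp t^{-1/2}$ is valid and the image interval really has length $\asymp 1$. This holds since $s+\al\in\e[n^2-n,(n+1)^2+n]$, which is comparable to $\e n^2$ as soon as $n\ge 2$, matching the range of $n$ in \eqref{rem I aux}. Once this scaling is confirmed, the three estimates assemble immediately.
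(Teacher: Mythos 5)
Your proposal is correct and follows essentially the same route as the paper: the pointwise and $L^1$ bounds come from \eqref{G 1 est} together with $s+\e n^2\asymp\e n^2\asymp\al$, and the total variation is controlled by observing that only the upper limit $\mu(\al,s)$ depends on $\al$, so the partition sum telescopes into $\TV(H_0,\e^{-1/2}\CA^{-1}(s+U_n))\le c$ over an image interval of length $\asymp1$ (note a harmless slip: the product $\e^{-1/2}\cdot\e n\cdot(\e n^2)^{-1/2}$ already equals $1$ without the extra factor $\e^{1/2}$ you wrote).
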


From the first inequality in \eqref{g_int rem} it follows that 
\be\label{rem In sum g22}
\sum_{\al_k\in U_n} |g_{m,\e}^{(2,2)}(\al_k)|\le c\rho(m) \sum_{\al_k\in \e[n^2,(n+1)^2]}\e/\al_k^{1/2}\le c\rho(m)\e^{1/2}.
\ee
After summing \eqref{rem In sum g22} over $|m|\le\e^{-\ga}$, we conclude that in the sum in \eqref{rem sum last} we can ignore all $\al_k\in U_n$ for any finitely many intervals $U_n$. 

The following lemma finishes the proof of \eqref{extra terms I} in case (B). Recall that the intervals $I_l$ are defined in \eqref{main intervals}.

\begin{lemma}\label{lem:rem sum gme} Under the assumptions of Lemma~\ref{lem:case B last}, one has:
\be\label{rem sum gme22}\begin{split}
\sum_{1\le |m|\le \e^{-\ga}}\sum_{n=2}^N\biggl|\sum_{\al_k\in U_n}g_{m,\e}^{(2,2)}(\al_k) e(\phi(\al_k)/\e)\biggr|=O(\e^{1/2}\ln(1/\e))
\end{split}
\ee
if \textcolor{black}{$\bt>\eta_0+1$}.
\end{lemma}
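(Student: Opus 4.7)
The plan is to follow the standard procedure already used in sections~\ref{sec:end A} and \ref{sec:caseB}: convert each inner exponential sum to an oscillatory integral via Lemma~\ref{lem:sum1 est}, then estimate that integral by integration by parts as in Lemma~\ref{lem:int1 est}. Since $\phi''(\al)=m\vec\al\cdot x_0\asymp m$ uniformly on a small neighborhood of $\al=0$, the quantity $\kappa\phi'(\al)$ varies by $O(|m|\e n)$ across $U_n=\e[n^2,(n+1)^2]$. I would partition each $U_n$ into at most $\lceil|m|\e n\rceil+1$ subintervals $U_{n,r}$ on each of which $|\kappa\phi'(\al)-l|\le 1/2$ for some integer $l=l(n,r,m)$, so that \eqref{Omega_eps} holds, and then apply Lemma~\ref{lem:sum1 est} separately on every piece.

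Using the estimates from Lemma~\ref{lem:rem gme} together with $\phi_{\text{mx}}^{\prime\prime}=O(|m|)$, the total error in this replacement on $U_n$ is bounded by
\[
(1+\e\phi_{\text{mx}}^{\prime\prime})\TV(g_{m,\e}^{(2,2)},U_n)+\phi_{\text{mx}}^{\prime\prime}\int_{U_n}|g_{m,\e}^{(2,2)}(\al)|\dd\al \le c\rho(m)\bigl[\e^{1/2}/n+|m|\e^{3/2}\bigr].
\]
Summing over $2\le n\le N=O(\e^{-1/2})$ produces $c\rho(m)[\e^{1/2}\ln(1/\e)+|m|\e]$, and then summing over $1\le|m|\le\e^{-\ga}$ with $\ga=1/(2(\bt-1))$ gives a total replacement error of $O(\e^{1/2}\ln(1/\e))$, which is acceptable once $\bt>3/2$ (in particular under $\bt>\eta_0+1\ge 2$).

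For each remaining oscillatory integral $\int_{U_{n,r}}g_{m,\e}^{(2,2)}(\al)h_l(\al)e(\phi_l(\al)/\e)\dd\al$ I would integrate by parts exactly as in the proof of Lemma~\ref{lem:int1 est}. Using $|\sin(\pi\kappa\phi_l'(\al))|\asymp|m||\al-\al_l^*|$ (where $\al_l^*$ is the solution of $\kappa\phi'(\al_l^*)=l$ nearest to $U_{n,r}$) together with $|g_{m,\e}^{(2,2)}(\al)|\le c\rho(m)\e^{1/2}/n$ and $\TV(g_{m,\e}^{(2,2)},U_n)\le c\rho(m)\e^{1/2}/n$, the boundary and integral pieces on $U_{n,r}$ are controlled, up to a $\ln(1/\e)$ factor, by $\rho(m)/(n\cdot\mathrm{dist}_{n,r})$, where $\mathrm{dist}_{n,r}=\min_{\al\in U_{n,r}}|\kappa\phi'(\al)-l|$. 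For all but finitely many pairs $(n,r)$ the distance $\mathrm{dist}_{n,r}\ge 1/2$, and the contribution from such pairs, summed harmonically in $n$ and absolutely in $m$ (using $\bt>2$), is $O(\e^{1/2}\ln(1/\e))$.

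The main obstacle, and the place where the exponent $\eta_0$ enters, is the unique exceptional subinterval closest to $\al=0$, where $\mathrm{dist}_{n,r}\to 0$. At $\al=0$ one has $\kappa\phi'(0)=\pm m\kappa\vec\al_0^\perp\cdot x_0$, with $\vec\al_0^\perp$ denoting the tangent to $\s$ at $y(0)$. An argument analogous to the one giving \eqref{ang0 bnd} then yields $|\al_l^*|\ge c\langle m\kappa\vec\al_0^\perp\cdot x_0\rangle/|m|$, so the contribution of the exceptional piece is at most
\[
c\,\e^{1/2}\ln(1/\e)\cdot\frac{\rho(m)|m|}{\langle m\kappa\vec\al_0^\perp\cdot x_0\rangle}.
\]
By the finite-type assumption on $\kappa\vec\al_0^\perp\cdot x_0$ (of type $\eta_0$), summing this bound over $1\le|m|\le\e^{-\ga}$ yields $O(\e^{1/2}\ln(1/\e))$ precisely when $\bt>\eta_0+1$. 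Combining all contributions completes the proof.
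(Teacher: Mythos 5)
Your overall architecture is the right one and matches the paper's: replace the inner sums by integrals via Lemma~\ref{lem:sum1 est} (your replacement-error estimate is essentially identical to the paper's Step~(4)), integrate by parts as in Lemma~\ref{lem:int1 est}, and feed the Diophantine condition on $\kappa\vec\al_0^\perp\cdot x_0$ into the sum over $m$. However, there are two genuine gaps in the resonance analysis.

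First, the claim that ``for all but finitely many pairs $(n,r)$ the distance $\mathrm{dist}_{n,r}\ge 1/2$'' and that there is a \emph{unique} exceptional subinterval cannot be right. By construction $|\kappa\phi'(\al)-l|\le 1/2$ on every $U_{n,r}$, so $\mathrm{dist}_{n,r}\le 1/2$ always; more importantly, $\kappa\phi'$ sweeps through $O(|m|)$ integers as $\al$ runs over $\Omega_\e$, so there are $O(|m|)$ resonant points $\al_l^*$ (one per $l\in\mathcal L(m)$ with $\al_l^*\in\Omega_\e$), and at each of them $\mathrm{dist}=0$, making $1/\mathrm{dist}$ unusable there. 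Each such point requires (i) excising finitely many $U_n$ around it and controlling them by the crude bound $\sum_{\al_k\in U_n}|g_{m,\e}^{(2,2)}(\al_k)|\le c\rho(m)\e^{1/2}$ (this costs a factor $O(|m|)$ and already forces $\bt>2$), and (ii) summing the tail $\sum_n \tfrac{1}{n\,|m|\,|\al_l^*-\e n^2|}$ over \emph{all} remaining $n$, which produces $\ln(\al_l^*/\e)/(|m|\al_l^*)$ per resonance and then a sum over $l$ dominated by the first resonance $l=l_0+1$. Your picture of a single bad subinterval near $\al=0$ skips all of this; the $1/\al_{l_0+1}^*$ singularity is generated by the interaction of the resonance denominators with the whole range of $n$, not by one subinterval.

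Second, the final arithmetic is off by a factor of $|m|$: the bound $c\,\e^{1/2}\ln(1/\e)\,\rho(m)|m|/\langle m\kappa\vec\al_0^\perp\cdot x_0\rangle$, summed over $1\le|m|\le\e^{-\ga}$ using $\langle ms\rangle^{-1}\le c\,m^{\eta_1}$, converges only for $\bt>\eta_0+2$, not $\bt>\eta_0+1$. To reach $\bt>\eta_0+1$ you must retain the factor $1/\phmn\asymp 1/|m|$ that integration by parts supplies (as in \eqref{rem int term 13}--\eqref{rem sum_l v2}): combined with $1/\al_{l_0+1}^*\le c|m|/\langle m\kappa\vec\al_0^\perp\cdot x_0\rangle$ it yields $\rho(m)\e^{1/2}\ln(1/\e)/\langle m\kappa\vec\al_0^\perp\cdot x_0\rangle$ with no extra $|m|$, which is exactly what sums under $\bt>\eta_0+1$. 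As written, your estimate either proves the lemma only under the stronger hypothesis $\bt>\eta_0+2$, or the cancellation must be made explicit.
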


\subsection{Proof of Lemma~\ref{lem:rem g-to-g22}}

Replace $\al$ with $\e n^2$ in the limits of integration with respect to $s$ in \eqref{rem gme 1}:
\be\label{rem gme 2}\begin{split}
g_{m,\e}^{(2,1)}(\al):=&\int_{|s|\le \e n}\int_0^{\mu(\al,s)} G(\al,s,\hat t,\e)\dd\hat t \dd s,\ 
\al\in U_n,\ 2\le n\le N.
\end{split}
\ee
This substitution makes $g_{m,\e}^{(2,1)}$ dependent on $n$. Moreover,
\be\label{del gme 2}\begin{split}
|g_{m,\e}^{(2)}(\al)-g_{m,\e}^{(2,1)}(\al)|\le &\int_{\e n\le |s|\le (\e \al)^{1/2}}\int_0^{\mu(\al,s)} |G(\al,s,\hat t,\e)|\dd\hat t \dd s\\ 
\le & c\rho(m)\int_{\e n\le |s|\le (\e \al)^{1/2}} \frac{\dd s}{(1+(s/\e)^2)(\al+s)^{1/2}}\\
\le & c\rho(m)\e^{1/2}/{n^3},\ \al\in U_n.
\end{split}
\ee

There are $O(\e^{-1/2})$ intervals $U_n\subset \Omega_\e$ and $O(n)$ values of $k$ such that $\al_k\in U_n$. Hence, replacing $g_{m,\e}^{(2)}(\al_k)$, $\al_k\in U_n$, with $g_{m,\e}^{(2,1)}(\al_k)$, $\al_k\in U_n$, in \eqref{rem generic sum g22} leads to
\be\label{lim repl 1}\begin{split}
&\sum_{|m|\le \e^{-\ga}}\sum_{n=2}^N\sum_{\al_k\in U_n}|g_{m,\e}^{(2)}(\al_k)-g_{m,\e}^{(2,1)}(\al_k)|\\
&\le c
\sum_{|m|\le \e^{-\ga}}\rho(m)\sum_{n=2}^{\e^{-1/2}} n\frac{\e^{1/2}}{n^3}=O(\e^{1/2})\sum_{|m|\le \e^{-\ga}}\rho(m)=O(\e^{1/2}),\ \textcolor{black}{\bt>1.}
\end{split}
\ee

Further simplification of \eqref{rem gme 2} is achieved by replacing $\al$ with $\e n^2$ in the argument of $G$ to obtain \eqref{rem gme 3}.
To estimate the associated error, we estimate the derivative $\pa_\al G(\al,s,\hat t,\e)$. Using that $|b|\asymp 1$, the derivatives $\pa_\theta b$ and $\pa_\al b$ are bounded and $(\CA^{-1})^{(j)}(t)\asymp |t|^{(1/2)-j}$, $j=1,2$, (see Lemma~\ref{lem:aux props rem}), we find from \eqref{rem gme 1}:
\be\label{rem Gder bnd}\begin{split}
&\max_{\al\in U_n,|\hat t|\le c}|\pa_\al G(\al,s,\hat t,\e)|
\le c\frac{\rho(m)}{(1+(s/\e)^2)(\al+s)^{1/2}}\\
&\times\biggl[|m|+\biggl(1+\frac{|s|}\e\biggr)\biggl(1+\frac{1}{(\al+s)^{1/2}}\biggr)+\frac{1}{(\al+s)^{1/2}}+\frac{1}{(\al+s)}\biggr],\\
&|s|\le\e n.
\end{split}
\ee
Since $\al+s\asymp \e n^2$, $|s|\le\e n$ and $\al\in U_n$, and $2\le n\le N=O(\e^{-1/2})$, \eqref{rem Gder bnd} simplifies to
\be\label{rem Gder bnd final}\begin{split}
&\max_{\al\in U_n,|\hat t|\le c}|\pa_\al G(\al,s,\hat t,\e)|\\
&\le c\frac{\rho(m)}{(1+(s/\e)^2)\e^{1/2} n}
\biggl[|m|+\frac{|s|}\e\frac1{\e^{1/2}n}+\frac{1}{\e n^2}\biggr],\ 
|s|\le\e n,\ 2\le n\le N.
\end{split}
\ee

Similarly to \eqref{del gme 2}, using \eqref{rem Gder bnd final} and that $|U_n|\asymp\e n$ gives 
\be\label{del gme 3}\begin{split}
&|g_{m,\e}^{(2,1)}(\al)-g_{m,\e}^{(2,2)}(\al)|\\
&\le\int_{|s|\le \e n}\int_0^{\mu(\al,s)} |G(\al,s,\hat t,\e)-G(\e n^2,s,\hat t,\e)|\dd\hat t \dd s\\ 
&\le c \e n\int_{|s|\le \e n} \max_{\al’\in U_n,|\hat t|\le c}|\pa_\al G(\al’,s,\hat t,\e)| \dd s\\ 
&\le  c\rho(m)\bigl(\e^{3/2}|m|+\e (\ln n/n)+\e^{1/2}n^{-2}\bigr),\ \al\in U_n,\ 2\le n\le N.
\end{split}
\ee
Similarly to \eqref{lim repl 1},
\be\label{lim repl 2}\begin{split}
&\sum_{|m|\le \e^{-\ga}}\sum_{n=2}^N\sum_{\al_k\in U_n}|g_{m,\e}^{(2,1)}(\al_k)-g_{m,\e}^{(2,2)}(\al_k)|\\
&\le c
\sum_{|m|\le \e^{-\ga}}\rho(m)\sum_{n=2}^{\e^{-1/2}} \bigl(\e^{3/2}|m|+\e(\ln n/n)+\e^{1/2}n^{-2}\bigr)=O(\e^{1/2}),\ \textcolor{black}{\bt>1,}
\end{split}
\ee
because $\e^{1/2}|m|\le 1$.

\subsection{Proof of Lemma~\ref{lem:rem gme}}\label{ssec:rem g22 props}

Again, using that $\e n^2+s\asymp \e n^2$ if $|s|\le\e n$, $n\ge 2$, \eqref{rem gme 1} and \eqref{G 1 est} imply $|g_{m,\e}^{(2,2)}(\al)|\le c\rho(m)\e^{1/2}/n$, $\al\in U_n$. Combining with $|U_n|\asymp\e n$ gives
\be\label{g_int rem v2}
\int_{U_n}|g_{m,\e}^{(2,2)}(\al)|\dd\al = O(\e^{3/2}).
\ee
Recall that the sets $U_n$ are defined in \eqref{rem I aux}. 

It remains to estimate $\TV(g_{m,\e}^{(2,2)},U_n)$. 
We argue similarly to section~\ref{ssec:step2}. 
By comparing \eqref{rem gme 3} and \eqref{small g v1 alt}, it is clear that \eqref{g findif}--\eqref{partition} still apply to $g_{m,\e}^{(2,2)}$ (with $\al=\e n^2$ in the arguments of $G$ and the domain of $s$-integration $|s|\le\e n$, cf. \eqref{rem gme 3}). By \eqref{G 1 est}, the analogue of \eqref{H0 rel est} becomes
\be\label{rem H0 rel est}\begin{split}
\sum_{k=1}^K |J_1(\al_{k-1}’,\al_k’)|&\le
c \rho(m)\int_{|s|\le \e n}\sum_{k=1}^K \frac{|H_0(\tilde\theta_k’)-H_0(\tilde\theta_{k-1}’)|}{(1+(s/\e)^2)(s+\e n^2)^{1/2}}\dd s\\
&\le
c \frac{\rho(m)}{\e^{1/2}n}\int_{|s|\le \e n}\TV(H_0,\tilde U_n(s))\frac{\dd s}{1+(s/\e)^2},\\
\tilde U_n(s):&=\e^{-1/2}\CA^{-1}(s+U_n),\ \tilde\theta_k’:=\e^{-1/2}\CA^{-1}(s+\al_k’),\\
\e n^2 &\le \al_0’<\al_1’<\dots<\al_K’\le \e(n+1)^2.
\end{split}
\ee
Here, the quantity $J_1$ is defined analogously to \eqref{g findif}. There is no $J_2$ now, since $G$ in \eqref{rem gme 3} does not depend on $\al$. As is easily seen,
\be\label{rem aux stuff}\begin{split}
&|\tilde U_n(s)|\asymp (n^2+n)^{1/2}-(n^2-n)^{1/2}\asymp 1,\ |s|\le \e n.
\end{split}
\ee
By \eqref{rem aux stuff} and assumption~\ref{ass:H0}(2), we can replace $\TV(H_0,\tilde U_n(s))$ with $V_0$ in \eqref{rem H0 rel est}. Therefore, 
\be\label{rem J1 sum}
\TV(g_{m,\e}^{(2,2)},U_n)\le c\sup_{K,\{\al_k’\}}\sum_{k=1}^K |J_1(\al_{k-1}’,\al_k’)|\le c\rho(m)\e^{1/2}/n.
\ee

\begin{remark} Estimate \eqref{rem J1 sum} is a key result in the proof of Lemma~\ref{lem:main-res-BC} (case (B)) where we need assumption~\ref{ass:H0}(2).
\end{remark}

\subsection{Proof of Lemma~\ref{lem:rem sum gme}}\label{ssec:lem prf}
Recall that the proof of Lemma~\ref{lem:main-res-BC} in case (B) follows the five steps described after \eqref{extra terms II}. The results obtained in this section up to this point complete the first three steps. In this subsection we complete steps (4) and (5).

\underline{Step (4).} By \eqref{sum est} (with $g=g_{m,\e}^{(2,2)}$), \eqref{g_int rem v2} and \eqref{rem J1 sum}, the total error of replacing the sum over $\al_k\in U_n$ in \eqref{rem sum gme22} with an integral over $U_n$ for all $U_n$ is bounded by
\be
c\sum_{|m|\le\e^{-\ga}}\rho(m)\sum_{n=2}^{\e^{-1/2}}\bigl[(1+\e |m|)\frac{\e^{1/2}}{n}+|m|\e^{3/2}\bigr]
=O(\e^{1/2}\ln(1/\e)),\ \textcolor{black}{\bt>1.}
\ee
This justifies replacing the sums over $U_n$ with integrals over $U_n$. 

\underline{Step (5).} Next we estimate the integrals over $U_n$. In this step $m\not=0$. To simplify the estimation, we group the $U_n$’s based on which interval $I_l$ they are a subset of: 
\be\label{Un 2 sums}
\sum_{n=2}^N\sum_{\al_k\in U_n}=\sum_{l\in\mathcal L(m)}\sum_{U_n\cap I_l\not=\varnothing}\sum_{\al_k\in U_n\cap I_l}.
\ee
We use the sets $L_1(m)$ and $L_3(m)$ defined in \eqref{L sets}. The intervals $I_l$, $l\in L_2(m)$ and $I_{\star}$ are omitted, because condition (2) in Definition~\ref{def:gp} and assumption~\ref{ass:S_B}(1) imply $\alst>\al_{\text{mx}}$. 

Given any $l\in\mathcal L(m)$, let $n_l$ be such that $\al_l^*\in U_{n_l}$, i.e. $\al_l^*\asymp \e n_l^2$. By \eqref{rem In sum g22}, we can ignore finitely many $U_n$ on either side of $\al_l^*$. Indeed, there are $O(|m|)$ sets $I_l$, so the total error of these omissions is $O(\e^{1/2})$ provided that \textcolor{black}{$\bt>2$} and the number of such omissions is uniformly bounded with respect to $l$. By the same reason we can ignore those $U_n$ that intersect two successive $I_l$. Hence, we can replace $U_n\cap I_l\not=\varnothing$ with $U_n\subset I_l$ in \eqref{Un 2 sums}. This implies also that the sum with respect to $l$ in \eqref{Un 2 sums} can be replaced by the sum over $l$ such that $\al_l^*\in\Omega_\e$.

Suppose \textcolor{black}{$\bt>(\eta_0+3)/2$.} This condition implies that for every $\al_l^*\in\Omega_\e$ there is a $U_n$ that contains it and each $U_n$ contains no more than one $\al_l^*$. Indeed, from \eqref{ang0 bnd} it follows that $\al_{l_0+1}^*$, the first $\al_l^*\in\Omega_\e$, satisfies $\al_{l_0+1}^*\ge c|m|^{-(\eta+1)}$ for any $\eta>\eta_0$. The assumption on $\bt$ and $|m|\le\e^{-\ga}$ imply $\al_{l_0+1}^*/\e\to \infty$, so $\al_{l_0+1}^*>4\e$ when $\e>0$ is sufficiently small. Also, $\al_{l+1}^*-\al_l^*\asymp 1/|m|$ and $|U_n|\asymp \e n\le c\e^{1/2}$. Hence $1/|m|\ge \e^\ga\ge\e^{1/2}$.

By a preceding argument suffices it to consider only $2\le n\le n_l-2$ and $n_l+2\le n\le N$. 

\noindent
\underline{Intervals $I_l$, $l\in L_1(m)\cup L_3(m)$}. 
Pick any $U_n\subset I_l$ and denote
\be\label{Jn def first C}
J_n:=\frac1{\Delta\al}\int_{U_n} (g_{m,\e}^{(2,2)} h_l)(\al) e\biggl(\frac{\phi_l(\al)}\e\biggr)\dd \al.
\ee
Similarly to \eqref{sum est 1 v2},
\be\label{sum est 2 v1}\begin{split}
&J_n
=\pi[J_{n,1}-J_{n,2}],\
J_{n,1}:=\left. \frac{g_{m,\e}^{(2,2)}(\al)}{\sin(\pi\kappa  \phi_l’(\al))}\right|_{\e n^2}^{\e (n+1)^2},\\
&J_{n,2}:=\int_{U_n}\pa_\al\frac{g_{m,\e}^{(2,2)}(\al)}{\sin(\pi\kappa  \phi_l’(\al))}e\biggl(\frac{\phi_l(\al)}\e\biggr)\dd\al.
\end{split}
\ee
Clearly, $\phmn\asymp|m|$. As was mentioned at the beginning of subsection~\ref{ssec:rem g22 props}, $|g_{m,\e}^{(2,2)}(\al)|\le c\rho(m)\e^{1/2}/n$, $\al\in U_n$. Using the obvious properties 
\be\label{rem aux ineqs}
|\phi_l’(\al)|\ge c\phi_{\text{mn}}^{\prime\prime}|\al-\al_l^*|,\ \al_l^*-\al \asymp\al_l^*-\e n^2,\ \al\in U_n\subset I_l, 
\ee
the integrated term is bounded as follows
\be\label{rem int term 13}
|J_{n,1}|\le c\frac{\rho(m)\e^{1/2}}{n} \frac1{|m||\al_l^*-\e n^2|},\ |n-n_l|\ge 2,\ 2\le n\le N.
\ee
Summing over $n$ and using that $(\al_l^*/\e)^{1/2}\asymp n_l$ gives
\be\label{rem int sum_n 13}\begin{split}
\frac{c\rho(m)\e}{|m|}&\biggl(\sum_{n=2}^{n_l-2}+\sum_{n=n_l+2}^\infty\biggr)\frac1{\e^{1/2}n|\al_l^*-\e n^2|}\\
&\le \frac{c\rho(m)\e^{1/2}}{|m|}\biggl(\int_{\e^{1/2}}^{(\al_l^*)^{1/2}-\e^{1/2}}+\int_{(\al_l^*)^{1/2}+\e^{1/2}}^\infty\biggr) \frac{\dd t}{t|\al_l^*-t^2|}\\
&\le \frac{c\rho(m)\e^{1/2}}{|m|}\frac{\ln(\al_l^*/\e)}{\al_l^*}.
\end{split}
\ee
By \eqref{rem J1 sum} and \eqref{rem aux ineqs},  
\be\label{rem integral 13}\begin{split}
|J_{n,2}|
\le & c\biggl[\max_{\al\in U_n}\biggl(\frac1{|\phi_l’(\al)|}\biggr)\TV(g_{m,\e}^{(2,2)},U_n)\\
&\hspace{2cm}+\max_{\al\in U_n}|g_{m,\e}^{(2,2)}(\al)|\int_{U_n} \biggl|\biggl(\frac1{\phi_l’(\al)}\biggr)’\biggr|\dd\al\biggr]\\
\le & c\frac{\rho(m)\e^{1/2}}{n}\biggl[  \frac1{|m||\al_l^*-\e n^2|} +
\int_{U_n} \frac1{|m|(\al_l^*-\al)^2}\dd\al\biggr]\\
\le & \frac{c\rho(m)}{|m|}\frac{\e^{1/2}}{n}\frac1{|\al_l^*-\e n^2|},
\end{split}
\ee
which coincides with \eqref{rem int term 13}. Hence, summing over $n$, gives \eqref{rem int sum_n 13}.

Add now estimates \eqref{rem int sum_n 13} for all $\al_l^*\in\Omega_\e$:
\be\label{rem sum_l v1}\begin{split}
\frac{c\rho(m)\e^{1/2}}{|m|}\sum_{\al_l^*\in\Omega_\e}\frac{\ln(\al_l^*/\e)}{\al_l^*}.
\end{split}
\ee
Due to the possible rapid decay of $\al_{l_0+1}^*$ as $m\to\infty$, the estimate of the first term in the sum (with $l=l_0+1$) based on \eqref{ang0 bnd} dominates all the other terms and we get that \eqref{rem sum_l v1} is bounded by
\be\label{rem sum_l v2}\begin{split}
c\rho(m)\langle m\kappa\vec\al_0^\perp\cdot x_0\rangle^{-1}\e^{1/2}\ln(1/\e).
\end{split}
\ee
Summing over $1\le|m|\le\e^{-\ga}$ gives $O(\e^{1/2}\ln(1/\e))$ provided that \textcolor{black}{$\bt>\eta_0+1$.} Note that this inequality implies $\bt>(\eta_0+3)/2$, which was assumed earlier, because $\eta_0\ge 1$ (see the paragraph following \eqref{type ineq}).

\subsection{Proof of Lemma~\ref{lem:ineq II B}}
By Lemmas~\ref{lem:aux props rem} and \ref{lem:three cases}, we estimate similarly to \eqref{A-outside}
\be\label{et II st1}\begin{split}
&\sum_k \int_{|\al-\al_k|\le \Delta\al/2}|A_0(\al,\e)-A_0(\al_k,\e)|\dd\al
\le 2\Delta\al \sum_k |A_0(\check\al_k,\e)|
=O(\e^{1/2}),\\
&\check\al_k:=\text{argmax}_{|\al-\al_k|\le \Delta\al/2}|A_0(\al,\e)|,
\end{split}
\ee
where all the sums are over $k$ such that $\al_k\in[-\pi/2,\pi/2]\setminus\Omega_\e$. 
Similarly, by Lemmas~\ref{lem:rem A-to-g2} and \ref{lem:rem g-to-g22}, 
\be\label{et II st2}\begin{split}
&\sum_{\al_k\in\Omega_\e} \int_{|\al-\al_k|\le \Delta\al/2}|A_0(\al,\e)-A_0(\al_k,\e)|\dd\al\\
&\le c\sum_{\al_k\in\Omega_\e} \max_{|\al-\al_k|\le \Delta\al/2}|g_{0,\e}^{(2,2)}(\al,\e)-g_{0,\e}^{(2,2)}(\al_k,\e)|+O(\e^{1/2}\ln(1/\e))\\
&\le c\sum_{U_n\subset\Omega_\e}\TV(g_{0,\e}^{(2,2)},U_n)+O(\e^{1/2}\ln(1/\e)).
\end{split}
\ee
The result follows immediately from \eqref{g_int rem}:
\be\label{rem TV Un}\begin{split}
\sum_{U_n\subset\Omega_\e}\TV(g_{0,\e}^{(2,2)},U_n)
\le c\sum_{1\le n\le \e^{-1/2}}\frac{\e^{1/2}}{n}=O(\e^{1/2}\ln(1/\e)).
\end{split}
\ee

\section{Proofs of Lemmas in section~\ref{sec:caseC}}\label{sec:caseC proofs}

\subsection{Proof of Lemma~\ref{lem:rem2 A-to-g2}}

Transform the expression for $A_m$ (cf. \eqref{recon-ker-v2}) similarly to \eqref{A-simpl} and use  assumption~\ref{ass:S_C}(3):
\be\label{A-simpl-lem}
\begin{split}
A_m(\al,\e)=&\frac1\e\int_{|u|\le a-\de}\int_0^{\e^{-1}H_\e(u)}\tilde\psi_m\left(\e^{-1}R(u,\al)+h(\hat t,u,\al)\right)F(u,\e \hat t)\dd\hat t\dd u,
\end{split}
\ee
where $R$, $F$ and $h$ are defined in \eqref{rem2 gme2}. From the definition of $\Omega$ (see the paragraph preceding assumptions~\ref{ass:S_C}) it follows that $|R(u,\al)|\ge c>0$ when $|u|\le a-\de$ and $\al\in[-\pi/2,\pi/2]\setminus\Omega$. This implies $A_m(\al,\e)=O(\rho(m)\e)$, $\al\not\in\Omega$. Therefore, $\e\sum_m\sum_{\al_k\not\in\Omega}|A_m(\al,\e)|=O(\e)$ if \textcolor{black}{$\bt>1$}. This proves the first line in \eqref{rem2 sums A-g22}. So we assume $\al\in\Omega$.

Clearly,
\be\label{rem2 Rb}
R(u,\al)=b(u,\al)(u-\CA(\al)),\ |u|\le a,\ \al\in\Omega,
\ee
where $\pa_u b$ and $\pa_\al b$ are bounded and $|b|\asymp 1$. These properties of $b$ hold on the set indicated in \eqref{rem2 Rb}.

By \eqref{four-coef-bnd}, we can restrict the domain of $u$ integration in \eqref{A-simpl-lem} (cf. \eqref{rem2 gme2}): 
\be\label{A-simpl v2}
\begin{split}
A_m(\al,\e)=(1/\e)g_{m,\e}^{(1)}(\al)+O(\rho(m)\e^{1/2}),\ \al\in\Omega.
\end{split}
\ee
Summing over $m\in\BZ$ proves the second line in \eqref{rem2 sums A-g22}.
As is easily checked,
\be\label{rem2 g2 bnd app}
\int_0^{\e^{1/2}}(1+(u/\e)^2)^{-1}\dd u=O(\e).
\ee
Together with \eqref{rem2 gme2} and \eqref{rem2 Rb} this proves \eqref{rem2 g2 bnd}. 

Next we estimate $\TV(g_{m,\e}^{(2)},U)$, where $U\subset\Omega$ and $|U|=\e^{1/2}$. Change variable $u\to\zeta=u-\CA(\al)$ and use \eqref{rem2 Rb}:
\be\label{Am small g rem2}\begin{split}
g_{m,\e}^{(1)}(\al)=&\int_{|\zeta|\le\e^{1/2}}\int_0^{H_0(\e^{-1/2}u)}\tilde\psi_m\left(b(u,\al)(\zeta/\e)+h(\hat t,u,\al)\right)\\
&\hspace{3cm}\times F(u,\e \hat t)\dd\hat t\dd \zeta,\ u=\zeta+\CA(\al),\ \al\in U.
\end{split}
\ee
Denote
\be\label{g rem2 v2}\begin{split}
g_{m,\e}^{(2)}(\al_1,\al_2):=&\int_{|\zeta|\le\e^{1/2}}\int_0^{H_0(\tilde u_1)}\tilde\psi_m\left(b(u_2,\al_2)(\zeta/\e)+h(\hat t,u_2,\al_2)\right)\\
&\hspace{3cm}\times F(u_2,\e \hat t)\dd\hat t\dd \zeta,\\ 
\tilde u_1:=&\e^{-1/2}(\zeta+\CA(\al_1)),\ u_2=\zeta+\CA(\al_2),\ \al_1,\al_2\in U.
\end{split}
\ee
Clearly, $g_{m,\e}^{(1)}(\al)\equiv g_{m,\e}^{(2)}(\al,\al)$. Using that $\pa_u b$, $\pa_\al b$, and $\CA’$ are bounded (cf. \eqref{theta deriv min}) gives 
\be\label{delg rem2 v2}\begin{split}
|\pa_{\al_2}g_{m,\e}^{(2)}(\al_1,\al_2)|\le&c\rho(m)\int_0^{\e^{1/2}}\frac{1+(\zeta/\e)}{1+(\zeta/\e)^2}\dd \zeta\\
\le & c\rho(m)\e\ln(1/\e),\ \al_1,\al_2\in U.
\end{split}
\ee
Furthermore,
\be\label{delg rem2 v3}\begin{split}
&|g_{m,\e}^{(2)}(\al_2,\al)-g_{m,\e}^{(2)}(\al_1,\al)|\le c\rho(m)\int_0^{\e^{1/2}}\frac{|H_0(\tilde u_2)-H_0(\tilde u_1)|}{1+(\zeta/\e)^2}\dd \zeta,\\ 
&\tilde u_j:=\e^{-1/2}(\zeta+\CA(\al_j)),\ j=1,2,\ \al,\al_1,\al_2\in U.
\end{split}
\ee
An argument analogous to those in sections~\ref{ssec:step2} and \ref{ssec:rem g22 props} yields
\be\label{TV rem2 pr1}\begin{split}
\TV(g_{m,\e}^{(1)},U) \le c\rho(m)\big(\e^{3/2}\ln(1/\e)+\e\big),
\end{split}
\ee
and \eqref{rem2 TV bnd} is proven.

\begin{remark} Estimate \eqref{TV rem2 pr1} is a key result in the proof of Lemma~\ref{lem:main-res-BC} (case (C)) where we need assumption~\ref{ass:H0}(2).
\end{remark}

\subsection{Proof of Lemma~\ref{lem:rem2 sum g2}}
Similarly to cases (A) and (B), the proof of Lemma~\ref{lem:main-res-BC} in case (C) follows the five steps described after \eqref{extra terms II}. The previous results up to this point complete the first three steps. In this section we complete steps (4) and (5).

\underline{Step (4).} By \eqref{rem2 g2 bnd} and \eqref{rem2 TV bnd}, Lemma~\ref{lem:sum1 est} can be applied to the sums with respect to $\al_k\in\Omega$ in \eqref{rem2 sum g2} for each $m\not=0$, with $g=g_{m,\e}^{(1)}$ given by \eqref{rem2 gme2}. Indeed, replacing the sums by integrals leads to the total error bounded by
\be
c\sum_{m\in\BZ}\rho(m)\bigl[(1+\e|m|)\e^{1/2}+|m|\e\bigr]
=O(\e^{1/2}),\ \textcolor{black}{\bt>2.}
\ee
Furthermore, \eqref{rem2 g2 bnd} implies that, for any $m$, in sum \eqref{rem2 sum g2} we can ignore all $\al_k$ in $O(|m|)$ intervals of length $O(\e^{1/2})$ each as long as \textcolor{black}{$\bt>2$.}  

\underline{Step (5).} In this step we assume $m\not=0$. Pick any $I_l$, $l\in \mathcal L(m)$, and remove an $O(\e^{1/2})$ neighborhood of $\al_l^*$ from it. More precisely, we replace $I_l$ by two intervals $I_l^-:=[\al_{l-(1/2)}^*,\al_l^*-\e^{1/2}]$ and $I_l^+:=[\al_l^*+\e^{1/2},\al_{l+(1/2)}^*]$. The two intervals are analogous, so we always use $I_l^+$ in the rest of this step. 

Consider the integrals
\be\label{Jn def first rem2}
J_l:=\frac1{\Delta\al}\int_{I_l^+} (g_{m,\e}^{(1)} h_l)(\al) e\biggl(\frac{\phi_l(\al)}\e\biggr)\dd \al.
\ee
Similarly to \eqref{sum est 1 v2} and \eqref{sum est 2 v1},
\be\label{sum est 2 v1 rem2}\begin{split}
&J_l
=\pi[J_{l,1}-J_{l,2}],\
J_{l,1}:=\left. \frac{g_{m,\e}^{(1)}(\al)}{\sin(\pi\kappa  \phi_l’(\al))}\right|_{\al_l^*+\e^{1/2}}^{\al_{l+(1/2)}^*},\\
&J_{l,2}:=\int_{I_l^+}\pa_\al\bigg(\frac{g_{m,\e}^{(1)}(\al)}{\sin(\pi\kappa  \phi_l’(\al))}\bigg) e\biggl(\frac{\phi_l(\al)}\e\biggr)\dd\al.
\end{split}
\ee
By \eqref{rem2 g2 bnd}, 
\be\label{rem2 int term 13}
|J_{l,1}|\le c \rho(m)\e/(\e^{1/2}\phmn(I_l^+)).
\ee
Break up $I_l^+$ into $O(\e^{-1/2})$ intervals of length $O(\e^{1/2})$ each. Then, by \eqref{rem2 g2 bnd} and \eqref{rem2 TV bnd}, 
\be\label{rem2 integral 13}\begin{split}
|J_{l,2}|&\le  c\frac{\rho(m)\e}{\phmn(I_l^+)}\biggl[ \sum_{1\le |n|\le O(\e^{-1/2})} \frac1{\e^{1/2}n}+
\int_{|\al_l^*-\al|\ge \e^{1/2}} \frac{\dd\al}{(\al_l^*-\al)^2}\biggr]\\
&\le c \frac{\rho(m)\e^{1/2}\ln(1/\e)}{\phmn(I_l^+)}.
\end{split}
\ee
Clearly, $\phmn(I_l^+)\ge \phmn(I_l)$. A bound on the latter quantity is in \eqref{phmn ineq}. In case (C), we can consider all $l\in\mathcal L(m)$ together because only the point $\alst$ is special ($\al=0$ is no longer special). Therefore, summing over $l\in\mathcal L(m)$ is essentially equivalent to summing over $l\in L_2(m)$. 
Using \eqref{sum 21} with $n=1$ gives the same estimate as in \eqref{exc2 Jn bnd}, which requires \textcolor{black}{$\bt>(\eta_\star+1)/2$.}

Suppose now $\al_k\in I_{\star}$. As is easily seen, cases (B) and (C) are equivalent when $\al_k\in I_{\star}$. 
In particular, the bounds in \eqref{exc2 g_e ass} and \eqref{rem2 g2 bnd}, \eqref{rem2 TV bnd} are the same. 
Hence the bound \eqref{exc II}, which requires \textcolor{black}{$\bt>(\eta_\star/2)+1$,} still applies. 

\subsection{Proof of Lemma~\ref{lem:rem2 sum g3}}\label{ssec:rem2 g3}
Using \eqref{A-simpl v2} with $m=0$ and \eqref{rem2 TV bnd} and arguing similarly to the proof of Lemmas~\ref{lem:ineq II}, \ref{lem:ineq II B} proves \eqref{extra terms II}. 

\section{Estimation of model integrals and a sum}\label{sec:model ints}

Throughout this section each integral being estimated is denoted $J$.

\subsection{Proof of Lemma~\ref{lem:first int}}\label{ssec:first int}

Change variables $\mu=\e^{-1/2}\al$, $t=\e^{-1/2}\theta$. Then
\be\begin{split}
J\le & c\e^{1/2}\biggl(\int_{|t|\le 1}+\int_{|t-\mu|\le 1}+\int_{|t|\ge 1,|t-\mu|\ge 1}\biggr)\frac{\dd t}{1+(t(t-\mu))^2}.
\end{split}
\ee
It is clear that the first two integrals are $O((1+|\mu|)^{-1})$ each. To estimate the third one, suppose, for example, $\al>0$ (and $\mu>1$):
\be\begin{split}
\int_{-\infty}^{-1}\frac{\dd t}{1+(t(t-\mu))^2}\le \biggl(\int_1^\mu+\int_\mu^\infty\biggr) \frac{\dd t}{t^2(t+\mu)^2}\le c(1+\mu)^{-2}.
\end{split}
\ee
The integral over $t\in [\mu+1,\infty)$ admits the same bound. The last integral is estimated as follows:
\be\begin{split}
\int_{1}^{\mu-1}\frac{\dd t}{1+(t(t-\mu))^2}\le c\int_1^{\mu/2} \frac{\dd t}{t^2\mu^2}\le c\mu^{-2}.
\end{split}
\ee

\subsection{Proof of \eqref{J22ab}}\label{ssec:J22ab}

From the definition of $J_{2,2}^a$ in \eqref{integral br} we have
\be\label{J22a 1}\begin{split}
J_{2,2}^a:=&\sum_{n=1}^{N-1}\frac{1}{n}\frac{1}{1+|n+\tilde p|}.
\end{split}
\ee
If $\tilde p>0$, we have
\be\label{J22a 1p}\begin{split}
J_{2,2}^a\le &c\int_{1}^{\infty}\frac{\dd x}{x(x+\tilde p)}=O(\ln(\tilde p)/\tilde p).
\end{split}
\ee
If $\tilde p<0$,
\be\label{J22a 2}\begin{split}
J_{2,2}^a\le &c\biggl[\int_1^{|\tilde p|-1}\frac{\dd x}{x(|\tilde p|-x)}+\int_{|\tilde p|-1}^{|\tilde p|+1}\frac{\dd x}{|\tilde p|}+\int_{|\tilde p|+1}^{\infty} \frac{\dd x}{x(x-|\tilde p|)}\biggr]\\
=&O(\ln|\tilde p|/|\tilde p|).
\end{split}
\ee

Consider now $J_{2,2}^b$. Suppose $\tilde p>0$. From \eqref{aux ints} and \eqref{integral br} we obtain by changing variable $\al\to x=(\al-\al_l^*)/\e^{1/2}$:
\be\label{J22b 1}\begin{split}
J_{2,2}^b=\int_1^\infty\frac{\dd x}{(1+|\tilde p+x|)x^2}\le &\int_1^\infty\frac{\dd x}{(\tilde p+x)x^2}=O(1/\tilde p).
\end{split}
\ee
If $\tilde p<0$, then
\be\label{J22b 2}\begin{split}
J_{2,2}^b\le &c\biggl[\int_1^{|\tilde p|-1}\frac{\dd x}{x^2(|\tilde p|-x)}+\int_{|\tilde p|-1}^{|\tilde p|+1}\frac{\dd x}{\tilde p^2}+\int_{|\tilde p|+1}^{\infty} \frac{\dd x}{x^2(x-|\tilde p|)}\biggr]\\
=&O(1/|\tilde p|).
\end{split}
\ee

\subsection{Proofs of \eqref{model int add 1} and \eqref{model der int}}\label{ssec: two sep ints}

To prove \eqref{model int add 1}, we change variables $t=\e^{-1/2}s$ and denote $\tal=\e^{-1/2}\al$. This gives
\be\begin{split}
J=\int_{|t|\le 1}\frac{|t|}{1+(t(\tal+t))^2}\dd t\le c\int_0^1 \frac{t}{1+\tal^2t^2}\dd t,
\end{split}
\ee
and the desired assertion follows. 

Similarly, use $t=\e^{-1/2}\theta$ and denote $\tal=\e^{-1/2}\al$ in \eqref{model der int} to get
\be\label{model der int 1}\begin{split}
J=\biggl(\int_{-\infty}^{\tal-1}+\int_{\tal+1}^\infty\biggr)\frac{|t|}{1+(t(t-\tal))^2}\dd t.
\end{split}
\ee
Suppose, without loss of generality, that $\tal\to+\infty$. The second integral is $O(\tal^{-1})$. To estimate the first integral, we split it into three, with the domains of integration $(-\infty,-1]$, $[-1,1]$, and $[1,\tal-1]$. The first two are easily seen to be $O(\ln(\tal)/\tal^2)$. Up to a constant factor, the third integral does not exceed the sum
\be\label{model der int1}\begin{split}
&\int_{1}^{(\tal/2)-1}\frac{t}{1+t^2\tal^2}\dd t+\int_{(\tal/2)-1}^{\tal-1}\frac{\tal}{1+\tal^2(t-\tal)^2}\dd t\\
&\le O(\tal^{-2}\ln\tal)+O(\tal^{-1})=O(\tal^{-1}).
\end{split}
\ee

\subsection{Proof of Lemma~\ref{lem:three cases}}\label{ssec:three cases}

Consider the first case. Set $\mu=\al^{1/2}$. Restricting the integration to $[0,a]$ gives
\be\begin{split}
J\le & c \int_0^a\frac{\dd\theta}{(1+\e^{-1}(\mu-\theta)\mu)^2}\le c\frac{\e}\mu\biggl[\frac1{1+\e^{-1}(\mu-a)\mu}-\frac1{1+\e^{-1}\mu^2}\biggr]\\
\le & c\frac1{(1+\e^{-1}(\mu-a)\mu)(1+\e^{-1}\mu^2)}\le c\frac\e{1+\e^{-1}(\mu-a)\mu},
\end{split}
\ee
where we have used that $1/\mu^2=1/\al\asymp 1$. Using that $(\mu-a)\mu\asymp\mu^2-a^2$ finishes the proof.

In the second case, 
\be\begin{split}
J\le & c \biggl(\int_0^\mu+\int_\mu^\infty\biggr)\frac{\dd\theta}{1+\e^{-2}(\theta^2-\mu^2)^2}
\le c\biggl[\e^{1/2}+\int_0^\infty\frac{\dd\theta}{1+(\theta^2/\e)^2}\biggr]\\
=&O(\e^{1/2}).
\end{split}
\ee
The last inequality follows by noticing that 
\be
1+(\theta^2-\mu^2)^2/\e^2\asymp 1+(\theta-\mu)^4/\e^2 
\ee
if $\mu=O(\e^{1/2})$, $\theta\ge\mu$ and changing variables $\theta\to\theta-\mu$.

To prove the third case, set $\mu=(1+|\al|/\e)^{1/2}$ and change variables $t=\e^{-1/2}\theta$:
\be
J\le c\e^{1/2}\int_0^\infty \frac{\dd t}{(\mu^2+t^2)^2}\le c\e^{1/2}/\mu^3.
\ee

\subsection{Proof of \eqref{rem mod2}}\label{ssec:Phi1_2}
Begin with $\Phi_1$. Setting $t=\e^{-1/2}\theta$ and $\mu=(\al/\e)^{1/2}$ in \eqref{rem mod1} gives $\mu\ge 2$ and
\be\label{rem mod1_1}\begin{split}
\Phi_1(\al)&\le c\e^{1/2}\int_{|t^2-\mu^2|\ge \mu}\frac{\dd t}{(t^2-\mu^2)^2}
\le c\frac{\e^{1/2}}{\mu^3}\biggl(1+\int_{1/\mu\le |s^2-1|\le 1/2}\frac{\dd s}{(s^2-1)^2}\biggr)\\
&\le c\frac{\e^{1/2}}{\mu^3}\biggl(1+\int_{1/\mu\le |v-1|\le 1/2}\frac{\dd v}{(v-1)^2}\biggr)
\le c \frac{\e^{1/2}}{\mu^2}.
\end{split}
\ee
Using the same substitutions in $\Phi_2$ we find
\be\label{rem mod2_1}\begin{split}
\Phi_2(\al)&\le c\e^{1/2}\int_{|t^2-\mu^2|\le \mu}\frac{\dd t}{1+(t^2-\mu^2)^2}
\le c\e^{1/2}\int_{|t-\mu|\le c}\frac{\dd t}{1+\mu^2(t-\mu)^2}\\
&\le c\e^{1/2}/\mu.
\end{split}
\ee

\bibliographystyle{abbrv}
\bibliography{My_Collection}
\end{document}